\def\be{\mathbf{e}}
\def\R{\mathbb{R}}
\def\N{\mathbb{N}}
\def\Z{\mathbb{Z}}
\newcommand\mx{{\mathbf{x}}}
\newcommand\mX{{\mathbf{X}}}
\newcommand\mY{{\mathbf{Y}}}
\newcommand\mJ{{\mathbf{J}}}
\def\ttv{\mathtt{v}}
\def\tta{\mathtt{a}}
\def\ttb{\mathtt{b}}
\def\dsp{\displaystyle}
\newcommand{\xdrv}[2]{{\frac{\partial #1}{\partial #2}}}
\newtheorem{hypo}{Assumption}[section]
\newtheorem{theorem}[hypo]{Theorem}
\newtheorem{lemma}[hypo]{Lemma}
\newtheorem{proposition}[hypo]{Proposition}
\newtheorem{remark}[hypo]{Remark}
\newcommand{\bn}{{\mathbf n}}
\newcommand{\bx}{{\mathbf x}}
\newcommand{\bX}{{\mathbf X}}
\newcommand{\hole}{\text{\rm hole}}
\newcommand{\IR}{{\mathbb R}}
\newcommand{\IC}{{\mathbb C}}
\newcommand{\IN}{{\mathbb N}}
\newcommand{\IZ}{{\mathbb Z}}
\newcommand{\macro}{\text{macro}}
\newcommand\xnorm[2]{\left\lVert #1 \right\rVert_{#2}}
\newcommand\xjump[1]{\left[ #1 \right]}
\newcommand\xavrg[1]{\left< #1 \right>}
\newcommand{\OmegaTop}    {\Omega_{\mathrm{T}}}
\newcommand{\OmegaBottom} {\Omega_{\mathrm{B}}}
\newcommand{\OmegaLimit}  {\Omega^0}
\newcommand{\LTop}        {L'}
\newcommand{\LBottom}     {L }
\newcommand{\HTop}        {H_{\mathrm{T}}}
\newcommand{\HBottom}     {H_{\mathrm{B}}}
\newcommand{\eOne}        {\mathrm{e}_1}
\newcommand{\eTwo}        {\mathrm{e}_2}
\newcommand{\Ltwo}        {\mathrm{L}^2}
\newcommand{\LtwoG}       {\mathrm{L}^2(\Gamma)}
\newcommand{\Hone}        {\mathrm{H}^1}
\newcommand{\Honeloc}     {\mathrm{H}^1_{\text{loc}}}
\newcommand{\Honehalf}    {\mathrm{H}^{\nicefrac12}}
\newcommand{\HonehalfG}   {\mathrm{H}^{\nicefrac12}     (\Gamma)}
\newcommand{\eg}{\textit{e.\,g\mbox{.}}\xspace}
\newcommand{\ie}{\textit{i.\,e\mbox{.}}\xspace}
\newcommand{\cf}{\textrm{cf.}\xspace}
\newcommand\commu[2]{\left[ #1 , #2 \right]}
\definecolor{mydarkcyan}{rgb}{0,0.5,0.5}
\definecolor{mygreen}{RGB}{46,192,87}
\begin{document}
\begin{center} {\Large{On the homogenization of the Helmholtz problem with
      thin perforated walls of finite length}}\\[0.5cm]

{\large{Adrien Semin$^{a,}$\footnote{This
  work was carried out where the author was at Research center
  Matheon, Institut f\"ur Mathematik, Technische Universit\"at Berlin,
  10623 Berlin, Germany.}, B\'erang\`ere Delourme$^{b,}$\footnote{Part of this work was
      carried out where the author was on research leave at Laboratoire
    POEMS, INRIA-Saclay, ENSTA, UMR CNRS 2706, France}, Kersten Schmidt$^{a,c,d}$}}\\[0.5cm]

{\small $a$: Brandenburgische Technische Universit\"at
  Cottbus-Senftenberg, Institut f\"ur
  Mathematik, 03046 Cottbus, Germany}\\
{\small $b$: Universit\'e Paris 13, Sorbone Paris Cit\'e, LAGA, UMR
  7539, 93430 Villetaneuse, France} \\
{\small $c$: Research center Matheon, 10623 Berlin, Germany}\\
{\small $d$: Institut f\"ur Mathematik, Technische Universit\"at Berlin, 10623 Berlin, Germany} \\
\end{center}

{\small \noindent  \textbf{Corresponding author:} Adrien Semin,
  Brandenburgische Technische Universit\"at
  Cottbus-Senftenberg\\
Address: Platz der deutschen Einheit 1,
Hauptgebäude (HG), Raum 4.01,
D-03046 Cottbus\\
E-mail: adrien.semin@b-tu.de\\
Tel: +49 (0)355 69 3735}\\[0.5cm]

% Table of contents is not present in the Laplace article, so I've
% removed it.
%%\tableofcontents

\textbf{Abstract}\\
In this work, we present a new solution representation for the
Helmholtz transmission problem in a bounded domain in $\mathbb{R}^2$
with a thin and periodic layer of finite length. The layer may
consists of a periodic pertubation of the material coefficients or it
is a wall modelled by boundary conditions with an periodic array of
small perforations.  We consider the periodicity in the layer as the
small variable $\delta$ and the thickness of the layer to be at the
same order. Moreover we assume the thin layer to terminate at
re-entrant corners leading to a singular behaviour in the asymptotic
expansion of the solution representation. This singular behaviour
becomes visible in the asymptotic expansion in powers of $\delta$
where the powers depend on the opening angle.  We construct the
asymptotic expansion order by order. It consists of a macroscopic
representation away from the layer, a boundary layer corrector in the
vicinity of the layer, and a near field corrector in the vicinity of
the end-points. The boundary layer correctors and the near field
correctors are obtained by the solution of canonical problems based,
respectively, on the method of periodic surface homogenization and on
the method of matched asymptotic expansions.  This will lead to
transmission conditions for the macroscopic part of the solution on an
infinitely thin interface and corner conditions to fix the unbounded
singular behaviour at its end-points. Finally, theoretical
justifications of the second order expansion are given and illustrated
by numerical experiments. The solution representation introduced in
this article can be used to compute a highly accurate approximation of
the solution with a computational effort independent of the small
periodicity $\delta$.

\textbf{Keywords}\\
Helmholtz equation, thin periodic interface, method of matched
asymptotic expansions, method of periodic surface
homogenization.%% keywords here, in the form: keyword \sep keyword

\textbf{AMS subject classification}\\
32S05, 35C20, 35J05, 35J20, 41A60, 65D15.

\section*{Introduction}	
The present work is dedicated to the iterative construction of a
second order asymptotic expansion of the solution to an Helmholtz
problem posed in a non-convex polygonal domain which excludes a set of
similar small obstacles equi-spaced along the line between two
re-entrant corners.  The distance between two consecutive obstacles,
which appear to be holes in the domain, and the diameter of the
obstacles are of the same order of magnitude $\delta$, which is
supposed to be small compared to the dimensions of the domain.  The
presence of this thin periodic layer of holes is responsible for the
appearance of two different kinds of singular behaviors. First, a
highly oscillatory boundary layer appears in the vicinity of the
periodic layer. Strongly localized, it decays exponentially fast as
the distance to the periodic layer increases. Additionally, since the
thin periodic layer has a finite length and ends in corners of the
boundary, corners singularities come up in the neighborhood of its
extremities. The objective of this work is to provide a practical
asymptotic expansion that takes into account these two types of
singular behaviors.

The boundary layer effect occurring in the vicinity of the periodic
layer is well-known. It can be described using a two-scale asymptotic
expansion (inspired by the periodic homogenization theory) that
superposes slowly varying macroscopic terms and periodic correctors
that have a two-scale behavior: these functions are the combination of
highly oscillatory and decaying functions (periodic of period $\delta$
with respect to the tangential direction of the periodic interface and
exponentially decaying with respect to $d/\delta$, $d$ denoting the
distance to the periodic interface) multiplied by slowly varying
functions. This boundary layer effect has been widely investigated
since the work of Panasenko~\cite{Panasenko81}, 
Sanchez-Palencia~\cite{RapportSanchezPalencia,SanchezPalencia},
Achdou~\cite{Achdou,AchdouCR} and
Artola-Cessenat~\cite{ArtolaCessenat,ArtolaCessenat2}. In particular,
high order asymptotics have been derived for the Laplace
equation~\cite{AchdouPironneauValentin,Madureira,CiupercaJaiPoignard,BreschMilisic2010}
and for the Helmholtz equation\cite{poirier2006impedance,Poirier}.

On the other hand, corner singularities appearing when dealing with
singularly perturbed boundaries have also been widely
investigated. Among the numerous examples of such singularly perturbed
problems, we can mention the cases of small inclusions~(see
chapter 2 of Ref.~\cite{LivreNazarov1} for the case of one inclusion and
Ref.~\cite{MR2573145} for the case of several inclusions), perturbed
corners\cite{DaugeTordeuxVialVersionLongue}, propagation of waves in
thin slots\cite{fente1,fente2}, propagation of waves across a thin
interface\cite{Claeys.Delourme:2013}, diffraction by
wires\cite{XavierArticle}, diffraction by a muffler containing
perforated ducts\cite{Bonnet.Drissi.Gmati:2005}, or the mathematical
investigation of patched antennas\cite{BendaliMakhloufTordeux}.
Again, this effect can be depicted using two-scale asymptotic
expansion methods that are the method of multi-scale expansion
(sometimes called compound method) and the method of matched
asymptotic expansions\cite{VanDyke,LivreNazarov1,Ilin}.
Following these methods, the solution of the perturbed problem may be
seen as the superposition of slowly varying macroscopic terms that do
not see directly the perturbation and microscopic terms that take into
account the local perturbation.

Recently, the authors investigated a Poisson problem in a polygonal
domain which excludes a set of similar small obstacles equi-spaced
along the line between two re-entrant
corners\cite{ResearchReportAKB,Delourme.Semin.Schmidt:2015}. In their
study, they have combined the two different kinds of asymptotic
expansions mentioned above in order to deal with both corner
singularities and the boundary layer effect. Based on the matched
asymptotic expansions, the authors constructed and justified a
complete asymptotic expansion. This asymptotic expansion relies on the analysis of the behaviour of the solutions of the Poisson problem in
an infinite cone with oscillating boundary with Dirichlet boundary
conditions by Nazarov\cite{Nazarov205}. In the present paper, we are
going to extend this work for the Helmholtz equation by constructing
explicitly and rigorously the terms of the expansions up to order $2$
(with Neumann boundary conditions on the perforations of the layer).

The remainder of the paper is organized as follows.  In
Section~\ref{sec:intr-sett-probl} we are going to define the problem,
show the main ingredients of the asymptotic expansion following the
method of matched asymptotic expansions, and give the main results.
The asymptotic expansion of the solution away from the corners is
  given in Section~\ref{SectionTransmissionConditionsBoundaryLayer},
  whereas the problem for the terms of the near field expansion and
  their behavior towards infinity, is analyzed in
  Section~\ref{sec:constr-singularities}. The terms of this expansion
takes into account the boundary layer effect due to the thin layer with
small perforations and satisfy transmission conditions. Then, the
matching of the far field and near field expansions and the iterative
construction of the terms of the asymptotic expansions are conducted
in Section~\ref{sec:constr-first-terms}. Finally, in
Section~\ref{sec:error_estimates} the asymptotic expansion is
justified with an error analysis.

\section{Description of the problem and main results}
\label{sec:intr-sett-probl}
\noindent In this section, we first define the problem under consideration (Section~\ref{SubsectionDescription}). Then, we give the Ansatz of the asymptotic expansion (Section~\ref{SubsectionAnsatz}). Finally, we give the main result of this paper, which states the existences of the terms of the asymptotic expansion and the convergence of the truncated series toward the exact solution and we show a numerical illustration of the result (Section~\ref{SubsectionMainResults}).
\subsection{Description of the problem}\label{SubsectionDescription}

%n this section we define the domain of interest
%$\Omega^\delta \subset \IR^2$ which contains a thin perforated wall of finite
%length as well as its limit when $\delta\to0$, and we introduce the Helmholtz
%problem on $\Omega^\delta$ to be studied in this article.

\subsubsection{Definition of the domain $\Omega^\delta$ with a thin perforated wall of finite
  length} 

Our domain of interest $\Omega^\delta$ consists of a (non-convex)
polygon $\Omega$ intersected with the complement of an array of
'small' similar obstacles, see~Fig.~\ref{fig:OmegaDelta}.  The polygon $\Omega$, represented on Figure~\ref{fig:Omega}, is the union of the rectangular domain $\OmegaTop$ and a symmetric
trapezoidal domain $\OmegaBottom$ (of height $\HBottom>0$) that share a common interface $\Gamma$ ($\Gamma$ corresponds to the upper side of $\OmegaBottom$ and the lower side of $\OmegaTop$). More precisely,  
\begin{equation}
\OmegaTop =  \left\{ \mathbf{x} = (x_1,x_2) \in \R^2, \mbox{such that}  -\LTop < x_1 < \LTop,\; \mbox{and} \;  0 < x_2 <  \HTop\right\}, \quad  (\LTop > \LBottom > 0, \; \HTop>0),
\end{equation}  
the common interface $\Gamma$ is given by
\begin{equation}
 \Gamma := \{ \mathbf{x} = (x_1,x_2) \in \R^2,  -\LBottom < x_1 < \LBottom \mbox{and}  \; x_2 =  0 \}
 \end{equation} 
 and 
 \begin{equation}
 \Omega = \OmegaBottom \cup \OmegaTop \cup \Gamma.
 \end{equation}
We point out that the polygon $\Omega$ has two re-entrant corners
$\mathbf{x}_{O}^\pm = ( \pm\LBottom,0)$ of angle of $\Theta > \pi$.
\begin{figure}[htbp]
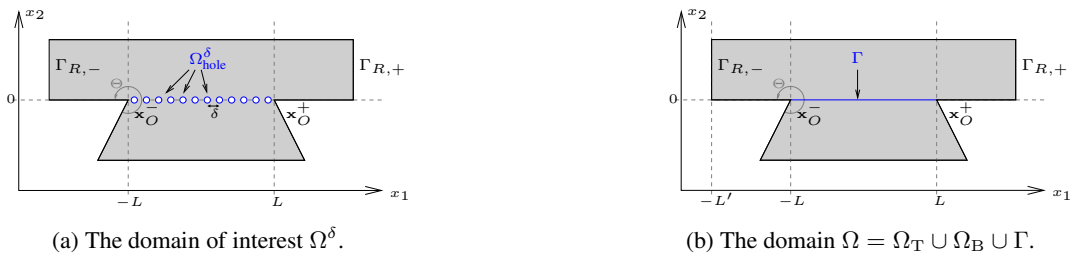

  \centering
  \begin{subfigure}[b]{0.44\textwidth}
    \null\hfill\input{domain.pspdftex}\hfill\null%Exported to 80%
    \caption{The domain of interest $\Omega^\delta$.} %$= \Omega \backslash \overline{\Omega^\delta_\hole}$.
    \label{fig:OmegaDelta}
  \end{subfigure}
  \qquad
  \begin{subfigure}[b]{0.44\textwidth}
    \null\hfill\input{domain2.pspdftex}\hfill\null%Exported to 80%
    \caption{The domain $\Omega = \OmegaTop \cup \OmegaBottom
      \cup \Gamma$.}
    \label{fig:Omega}
  \end{subfigure}
  \caption{Illustration of the polygonal domain $\Omega$ and the
    domain of interest $\Omega^\delta$.}
  \label{fig:dessinsDomaines}
\end{figure}

\noindent Besides, let $\widehat{\Omega}_{\hole} \in \IR^2$ be a {\emph {smooth}}
canonical bounded open set
(not necessarily connected) strictly included in the domain $(0,1)
\times (-1,1)$. Then, let $\N^\ast := \N \setminus \{0 \}$ denote the set of positive integers
and let
 $\delta$ be a positive real number (that is
supposed to be small) such that 
\begin{equation}
\frac{2\LBottom}{\delta} = q \in \N^\ast.
\end{equation}

\noindent %
Now, let $\Omega^\delta_{\hole}$ be the thin (periodic) layer
consisting of $q$ equi-spaced similar obstacles defined by scaling and
shifting the canonical obstacle $\widehat{\Omega}_{\hole}$ (see
Fig.~\ref{fig:OmegaDelta}):
\begin{equation}
  \label{eq:layer_Omega_hole}
  \Omega^\delta_{\hole} = \bigcup_{\ell = 1}^{q} 
  \left\{ -\LBottom \eOne + \delta \{ \widehat{\Omega}_{\hole} +
        (\ell-1) \eOne \} \,  \right\}.
\end{equation}
Here, $\eOne$ and $\eTwo$ denote the unit vectors of $\IR^2$ and $\delta$ is assumed to be smaller
than $\HTop$ and $\HBottom$ such that $\Omega^\delta_\hole$ does not touch the top or bottom boundaries of $\Omega$. %
Finally, we define our domain of interest as
\begin{align*}
  \Omega^\delta = (\OmegaBottom \cup \OmegaTop \cup \Gamma) \backslash \overline{\Omega^\delta_{\hole}}.
\end{align*}
Its boundary $\partial\Omega^\delta$ consists of the union of
  three sets (see Figure~\ref{fig:dessinsDomaines}): 
  \begin{enumerate} 
  \item[-] the set of holes
  $\Gamma^\delta = \partial\Omega^\delta_{\hole}$,
  \item[-]  the lateral boundaries  $\Gamma_{R,\pm} = \left\lbrace \bx \in \partial \Omega^\delta \ / \
    x_1 = \pm \LTop \right\rbrace$ of $\OmegaTop$:
    $$\Gamma_R = \Gamma_{R,-} \cup \Gamma_{R,+},$$  
  \item[-] the remaining part
  $\Gamma_N = \partial \Omega^\delta \setminus \left( \Gamma^\delta
    \cup \Gamma_R \right) = \partial \Omega \setminus \Gamma_R$, namely  the boundaries of $\OmegaBottom$ except $\Gamma$ and the upper boundary $\OmegaTop$.
    \end{enumerate}
Note, that in the limit $\delta\to 0$ the repetition of holes
degenerates to the interface $\Gamma$, the domain $\Omega^\delta$ to
the domain $\OmegaLimit := \OmegaTop \cup \OmegaBottom = \Omega\setminus\Gamma$, and
its boundary $\partial\Omega^\delta$ to $\partial\Omega \cup \Gamma$.

\begin{remark}
  Note that the asymptotic analysis that will be employed in this
  article can be simply transferred to similar domains with thin
  periodic layers and different boundary conditions away from the
  layer.  For example, the upper subdomain $\OmegaTop$ can be replaced
  by a half space where radiation conditions are imposed at infinity.
\end{remark}

\subsubsection{The Helmholtz problem with a thin perforated wall of finite length.}
On the domain $\Omega^\delta$ we introduce the Helmholtz transmission
problem to be considered in this article. Let $k_0>0$ be a given positive number, and let \mbox{$u_{\text{inc}} = \exp(\imath k_0 (x_1 - L'))$}  be an incident plane wave of wavenumber $k_0$ coming from
the left, we seek
$u^\delta$ as solution of the total field problem
\begin{equation}
  \label{eq:perturbed_Helmholtz}
  \left\lbrace\quad
    \begin{aligned}
      - \Delta u^\delta -(k^\delta)^2(\bx) u^\delta &= 0, &\quad&
      \text{in }
      \Omega^\delta,\\
      \nabla u^\delta \cdot \bn &= 0, &&\text{on } \Gamma^\delta,\\
      \nabla (u^\delta - u_{\text{inc}}) \cdot \bn - \imath k_0
      (u^\delta - u_{\text{inc}}) &= 0,
      &&\text{on }\Gamma_R^-,\\
      \nabla u^\delta \cdot \bn - \imath k_0
      u^\delta  &= 0  && \text{on } \Gamma_R^+,\\ 
      \nabla u^\delta \cdot \bn &= 0, &&\text{on }\Gamma_N.
    \end{aligned}
  \right.
\end{equation}
\noindent In the previous system of equations, $\bn$ stands for the outward unit normal  vector of $\partial\Omega^\delta$. %
In the first equation of~\eqref{eq:perturbed_Helmholtz}, $k^\delta(\bx)$  is given by
$$
k^\delta(\bx) = \begin{cases}
k_0 & \mbox{if} \; \bx \quad  \in \Omega^\delta \setminus (-L,L) \times (-\delta,\delta),\\[1ex]
\dsp \widehat{k}(\frac{x_1}{\delta}, \frac{x_2}{\delta}) & \mbox{otherwise},
\end{cases} 
$$ 
 where the function $\widehat{k}$ (defined on  $\R^2$) is a smooth, positive function that is $1$-periodic with respect to its first
variable $s$. We also assume that there exists $\eta \in (0,1)$ such that $ k(s,t)= k_0$ for $|t| > \eta$ or $|s|>\eta$. In other words, $k^\delta$ is a smooth function that is  constant equal to $k_0$ outside the thin layer  $ (-L,L) \times (-\delta,\delta)$ and periodic of period $\delta$ in the vicinity of it. In particular, $k^\delta$ is bounded from above and from below independently of $\delta$ and $k^\delta$ tends almost everywhere to $k_0$.\\

\noindent  The model~\eqref{eq:perturbed_Helmholtz} can be seen as a Helmholtz transmission
problem in an infinite wave-guide with Neumann boundary conditions on
the (rigid) walls, especially, on $\Gamma^\delta$ and $\Gamma_N$,
which is truncated to a finite domain using first-order absorbing
boundary conditions of Robin's type on $\Gamma_R$ (see \eg
Ref.~\cite{Goldstein:1982}). The following well-posedness result,
  based on the Fredholm alternative (Theorem 6.6 in
  ~\cite{BrezisAnglais}), is standard (see for instance Lemma 3.4
  in\cite{fente2} - Proposition 11.3 in \cite{Claeys.Delourme:2013}). 
: \begin{proposition}[Existence, uniqueness and stability]
  \label{prop:existence_uniqueness_u_delta}
  For any $\delta>0$ there exists a unique solution $u^\delta$ of
  problem~\eqref{eq:perturbed_Helmholtz} in
  $\Hone(\Omega^\delta)$. Moreover, there exists a constant $C$ (independent
  of~$\delta$) such that
  \begin{equation}
    \label{eq:prop_norm_u_norm_f}
    \xnorm{u^\delta}{\Hone(\Omega^\delta)} \leqslant
    C \| \nabla u_{\text{\em inc}}\cdot \bn - \imath k_0 u_{\text{\em inc}} \|_{(\Honehalf(\Gamma_R^-))'}.
  \end{equation}
\end{proposition}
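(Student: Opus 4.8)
The plan is to establish well-posedness at fixed $\delta$ by the Fredholm alternative combined with a unique continuation argument, and then to upgrade the (a priori $\delta$-dependent) stability constant to a uniform one by a compactness--contradiction argument. The two structural facts that will do the work are that each fixed $\delta$ gives a finite stability constant while the admissible values $\delta=2\LBottom/q$, $q\in\N^\ast$, are isolated and accumulate only at $0$, and that the perforated layer admits an extension operator into $\Hone(\Omega)$ whose norm is bounded uniformly in $\delta$.

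First I would pass to the weak formulation of \eqref{eq:perturbed_Helmholtz}: seek $u^\delta\in\Hone(\Omega^\delta)$ such that
\[
 a^\delta(u^\delta,v):=\int_{\Omega^\delta}\!\big(\nabla u^\delta\cdot\overline{\nabla v}-(k^\delta)^2 u^\delta\,\overline v\big)\,d\bx-\imath k_0\!\int_{\Gamma_R}\! u^\delta\,\overline v\,d\sigma=\int_{\Gamma_R^-}\!\big(\nabla u_{\text{inc}}\cdot\bn-\imath k_0 u_{\text{inc}}\big)\overline v\,d\sigma
\]
for all $v\in\Hone(\Omega^\delta)$. Since $0<k^\delta\le M$ with $M$ independent of $\delta$, the form $a^\delta$ obeys a G{\aa}rding inequality $\mathrm{Re}\,a^\delta(v,v)\ge\xnorm{\nabla v}{\Ltwo(\Omega^\delta)}^2-M^2\xnorm{v}{\Ltwo(\Omega^\delta)}^2$ with constants independent of $\delta$; because $\Omega^\delta\subset\Omega$ is bounded, $\Hone(\Omega^\delta)\hookrightarrow\Ltwo(\Omega^\delta)$ is compact, and the trace $\Hone(\Omega^\delta)\to\Ltwo(\Gamma_R)$ is compact too, $\Gamma_R$ staying at a fixed positive distance from the holes (they all lie in the strip $|x_1|<\LBottom<\LTop$). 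Thus $a^\delta$ is a compact perturbation of a coercive form and Fredholm's alternative reduces existence to uniqueness. For uniqueness I would take vanishing data and test with $v=u^\delta$: the imaginary part of $a^\delta(u^\delta,u^\delta)=0$ yields $k_0\xnorm{u^\delta}{\Ltwo(\Gamma_R)}^2=0$, hence $u^\delta=0$ on $\Gamma_R$ and, by the boundary condition, $\nabla u^\delta\cdot\bn=0$ there; extending $u^\delta$ by zero across $\Gamma_R$ produces a solution of $-\Delta w-(k^\delta)^2 w=0$ in a neighbourhood of $\Gamma_R$, so the unique continuation principle (applicable since $k^\delta$ is smooth) together with the connectedness of $\Omega^\delta$ (the holes, being strictly inside the cells $(-\LBottom+(\ell-1)\delta,-\LBottom+\ell\delta)\times(-\delta,\delta)$, cannot disconnect it) forces $u^\delta\equiv0$. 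This gives existence, uniqueness, and \eqref{eq:prop_norm_u_norm_f} with a possibly $\delta$-dependent constant.

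To make the constant uniform, set $g:=\nabla u_{\text{inc}}\cdot\bn-\imath k_0 u_{\text{inc}}$ and combine the G{\aa}rding inequality, the weak identity and the uniform trace estimate on $\Gamma_R$ to obtain $\xnorm{u^\delta}{\Hone(\Omega^\delta)}\le C\big(\xnorm{g}{(\Honehalf(\Gamma_R^-))'}+\xnorm{u^\delta}{\Ltwo(\Omega^\delta)}\big)$ with $C$ independent of $\delta$; it therefore suffices to bound $\xnorm{u^\delta}{\Ltwo(\Omega^\delta)}$ uniformly. I would argue by contradiction: since each fixed $\delta$ has a finite constant and the admissible values of $\delta$ accumulate only at $0$, a failure of the uniform bound produces $\delta_n\to0$ and solutions $u_n$ (for data $g_n$) with $\xnorm{u_n}{\Hone(\Omega^{\delta_n})}=1$ and $\xnorm{g_n}{(\Honehalf(\Gamma_R^-))'}\to0$, whence $\liminf_n\xnorm{u_n}{\Ltwo(\Omega^{\delta_n})}>0$ by the displayed inequality. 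Let $\tilde u_n:=E_{\delta_n}u_n\in\Hone(\Omega)$, where $E_{\delta_n}$ is a $\delta$-uniformly bounded extension operator for the perforated layer (a classical construction, applicable here because the perforations are rescaled copies of the fixed smooth set $\widehat{\Omega}_{\hole}$ strictly included in $(0,1)\times(-1,1)$, and it can be taken to be the identity away from the layer). Then $(\tilde u_n)$ is bounded in $\Hone(\Omega)$, so, along a subsequence, $\tilde u_n\rightharpoonup u_\ast$ weakly in $\Hone(\Omega)$ and $\tilde u_n\to u_\ast$ strongly in $\Ltwo(\Omega)$. Passing to the limit in the weak formulation tested against functions of $\Hone(\Omega)$ — the hole integrals are absent owing to the Neumann condition, $k^{\delta_n}\to k_0$ boundedly and almost everywhere, $|\Omega\setminus\Omega^{\delta_n}|\to0$, and the $\Gamma_R$-traces converge strongly in $\Ltwo(\Gamma_R)$ by compactness — I would identify $u_\ast\in\Hone(\Omega)$ as a solution of the Helmholtz problem on the \emph{unperforated} polygon $\Omega$, with the Robin condition on $\Gamma_R$ and the homogeneous Neumann condition on $\partial\Omega\setminus\Gamma_R$. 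The uniqueness argument above then gives $u_\ast=0$, so $\xnorm{u_n}{\Ltwo(\Omega^{\delta_n})}\le\xnorm{\tilde u_n}{\Ltwo(\Omega)}\to0$, contradicting $\liminf_n\xnorm{u_n}{\Ltwo(\Omega^{\delta_n})}>0$. Finally, since $u_{\text{inc}}$ is a fixed smooth plane wave, $g\in\Ltwo(\Gamma_R^-)\hookrightarrow(\Honehalf(\Gamma_R^-))'$, so the right-hand side of \eqref{eq:prop_norm_u_norm_f} is finite.

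I expect the last step — the $\delta$-uniform estimate — to be the main obstacle: one needs the $\delta$-uniformly bounded extension operator for the perforated layer, and one must justify carefully the passage to the limit in the weak formulation on the varying domains $\Omega^{\delta_n}$ as the thin layer collapses onto $\Gamma$. Everything else (the G{\aa}rding inequality, the Fredholm reduction, unique continuation, connectedness of $\Omega^\delta$) is routine.
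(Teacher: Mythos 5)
Your proposal follows essentially the same route as the paper's proof in Appendix~A: weak formulation, Fredholm alternative combined with a unique continuation argument for well-posedness at fixed $\delta$, then a compactness--contradiction argument using a $\delta$-uniformly bounded extension operator (the paper cites Rauch--Taylor for this) to pass from the perforated domain to $\Omega$, identify the weak limit as a solution of the limit Helmholtz problem, and conclude $u_\ast=0$. The only cosmetic difference is that you first isolate the G\aa rding-type bound $\xnorm{u^\delta}{\Hone(\Omega^\delta)}\le C(\xnorm{g}{}+\xnorm{u^\delta}{\Ltwo(\Omega^\delta)})$ and then contradict the $\Ltwo$-bound, whereas the paper normalizes the $\Hone$-norm to $1$ from the outset and derives the same contradiction by showing $\xnorm{\tilde u_n}{\Ltwo(\Omega)}\to0$ and then, via the same G\aa rding step applied at the end, $\xnorm{\nabla\tilde u_n}{\Ltwo(\Omega)}\to0$.
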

\noindent For the sake of completeness, the proof of the previous is written in Appendix~\ref{AppendixStability}. We remark that the constant $C$ appearing in the stability estimates~\eqref{eq:prop_norm_u_norm_f} is independent of $\delta$ but depends on $k_0$, $\hat{k}$, and $\hat{\Omega}_{\text{hole}}$. \\

\noindent The objective of this paper is to describe the behaviour of
$u^\delta$ as $\delta$ tends to $0$. Our work relies on a construction
of an asymptotic expansion of $u^\delta$ as $\delta$ tends to $0$. %

\subsection{Ansatz of the asymptotic expansion}\label{SubsectionAnsatz}

As mentioned in the introduction, due to the presence of both the
periodic layer and the two re-entrant corners, it seems not possible to write
a simple asymptotic expansion valid in the whole domain. We have to
take into account both the boundary layer effect in the vicinity of
$\Gamma$ and the additional corner singularities appearing in the
neighborhood of the two re-entrant corners $\mx_{O}^ \pm$. To do so, we shall
distinguish differents areas where the expansions are different:
\begin{enumerate}
\item[-] a {\em far field area} located 'far' from the 
corners $\mx_{O}^ \pm$  (hached area on Fig.~\ref{DessinAsymptotic}),
\item[-] two {\em near field zones} located in their
vicinities (grey areas on Fig.~\ref{DessinAsymptotic}).
\end{enumerate}
 The far and near field areas intersect in the (non-empty) matching zone.

\begin{figure}[h!]
  \centering
  \input{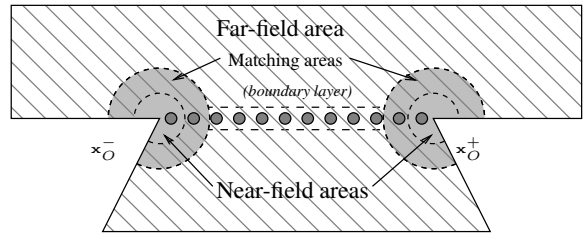}
  \caption{Schematic representation of the overlapping subdomains for the asymptotic expansion.
  The far field area ({\em hatched}) away from the corners
  $\mx_{O}^ \pm$ is overlapping the near field area ({\em gray}) in the matching zone.}
\label{DessinAsymptotic}
\end{figure}

\subsubsection{Far field expansion} 
In this section, we write an asymptotic expansion valid away from the two corners $\mx_{O}^\pm$ (hatched area
in~Fig.~\ref{DessinAsymptotic}). We shall decompose $u^\delta$ as the
superposition of a {\em macroscopic part} (that contains no rapid
oscillation) and a {\em boundary layer contribution} localized in the
neighborhood of the thin periodic layer. In the present case the solution $u^\delta$ is then expanded in powers of $\delta$, where each power
is the sum of an integer and a so-called singular exponent $\lambda_n$ given by
 \begin{equation}\label{DefinitionLambda}
 \lambda_n = n\lambda, \quad \lambda = \frac{\pi}{\Theta}.
 \end{equation}
More precisely, we choose the ansatz
\begin{multline}
  \label{FFExpansion}
  u^\delta(\mx) = %
    u_{\text{FF},0,0}^\delta(\mx) %
    + \delta^{\lambda_1} \,u_{\text{FF},1,0}^\delta(\mx) %
    + \delta \,u_{\text{FF},0,1}^\delta(\mx) %
    \\+ \delta^{\lambda_2} \,u_{\text{FF},2,0}^\delta(\mx) %
    + \delta^{\lambda_1+1} \, u_{\text{FF},1,1}^\delta(\mx) %
    + O(\delta^{\min(\lambda_3,2)}),
\end{multline}
where each term takes the form
\begin{equation} \label{def_uFFnq}
  u_{\text{FF},n,q}^\delta (\mathbf{x}) =  
  \begin{cases}
    %(1- (1-\chi\big(\frac{|x_1| - \LBottom}{\delta}\big)) (1 - \chi\left( \frac{x_2}{\delta} \right))) 
    u_{n,q}^\delta (\mx) & \mbox{if} \; |x_1| > \LBottom+2\delta, \\
    \chi\left( \frac{x_2}{\delta} \right) 
    u_{n,q}^\delta (\mathbf{x})
    + %\chi\big(\frac{|x_1| - \LBottom}{\delta}\big) 
    \Pi_{n,q}^\delta(x_1,
    \frac{\mathbf{x}}{\delta}) & \mbox{if} \; |x_1|< \LBottom - 2\delta,
  \end{cases} 
\end{equation}
with a smooth transition for $L-2\delta < |x_1| < L + 2\delta$, which
is detailed later in the article (see
Section~\ref{sec:error_estimates},
  Ref.~\cite{ResearchReportAKB} and
  Ref.~\cite{Delourme.Semin.Schmidt:2015}).  Here,
$u_{\text{FF},n,q}^\delta$, $(n,q) \in \IN^2$, is a combination of
{\em macroscopic terms} $u_{n,q}^\delta$ and {\em boundary layer
  correctors} $\Pi_{n,q}^\delta$, and $\chi : \R \mapsto (0,1)$
denotes a smooth cut-off function satisfying
\begin{equation}
  \label{defchi}
  \chi(t) = \begin{cases}
    1 & \mbox{if} \; |t| >2, \\[1ex]
    0 & \mbox{if} \; |t|<1.
  \end{cases}
\end{equation}
The superscript~$\delta$ in $u_{n,q}^\delta$ and $\Pi_{n,q}^\delta$ indicates that they may depend on $\delta$, however, this 
dependence is only polynomial in $\ln \delta$.
% \bdsn{\\Faut il vraiment mettre cette remarque a cette place ?? peut etre qu on pourrait la mettre plus loin ? je suis pour l'enlever}
% \begin{remark} For a given $N_0 \in \R^+$, it is possible to generalize the previous formula by truncating the
% series~\eqref{FFExpansion} at order~$\delta^{N_0}$, taking a sum of
% terms of the form $\delta^{\lambda_n + q} u_{\text{FF},n,q}^\delta$,
% $(n,q)\in \IN_{N_0}$, where $\IN_{N_0}$ denotes the set of indexes
% $(n,q)\in\N^2$ such that $\lambda_n+q < N_0$. In the present case, we
% aim to construct a second order approximation of
% $u^\delta$, which leads to consider
% $\N_2 := \lbrace (0,0), (1,0), (0,1), (2,0), (1,1), (3,0)\rbrace$ if
% $\Theta > \frac{3\pi}{2}$ and
% $\N_2 := \lbrace (0,0), (1,0), (0,1), (2,0), (1,1)\rbrace$
% otherwise.
% \end{remark}
In the next three paragraphs, we shall write the equations satisfied by the macroscopic terms, the boundary layer correctors and the transmissions conditions liking the two kinds of terms. The detailed derivation of these equations is done in Section~\ref{SubSectionDerivationu0u1}.

\textbf{Macroscopic equations.}
The macroscopic terms $u_{n,q}^\delta$ are defined in the limit domain
$\OmegaTop \cup \OmegaBottom$. Based on the usual decay assumption (see \eg
Ref.~\cite{poirier2006impedance} and Ref.~\cite{Poirier}) on the boundary layer correctors
we find that the macroscopic terms satisfy the homogeneous Helmholtz
equation
\begin{equation}\label{FFVolum}
  - \Delta u_{n,q}^\delta - k_0^2 u_{n,q}^\delta = 0 \; \mbox{in} \; \OmegaTop \cup \OmegaBottom,
\end{equation}
which is completed with prescribed boundary conditions on $\Gamma_R$
and $\Gamma_N$
\begin{equation}
  \label{eq:FF_macro_bc}
  \begin{aligned}
   \nabla (u_{0,0}^\delta - u_{\text{inc}}) \cdot \bn - \imath k_0
    (u_{0,0}^\delta - u_{\text{inc}}) &= 0 ,
    &&\text{on }\Gamma_R^-,\\
     \nabla u_{0,0}^\delta \cdot \bn - \imath k_0
    u_{0,0}^\delta  &= 0 ,
    &&\text{on }\Gamma_R^+,\\
    \nabla u_{n,q}^\delta \cdot \bn - \imath k_0
    u_{n,q}^\delta &= 0, \quad (n,q) \not= (0,0),
    &&\text{on }\Gamma_R,\\
    \nabla u_{n,q}^\delta \cdot \bn &= 0, &&\text{on }\Gamma_N.
  \end{aligned}
\end{equation}
A priori, they are not continuous across~$\Gamma$ and
may become unbounded when approaching the corners
$\mathbf{x}_{O}^\pm$. Hence, the macroscopic terms are not entirely
defined: 
\begin{itemize}
\item[-] we first have to prescribe transmission conditions
across the interface $\Gamma$ (for instance the jump of their trace
and the jump of their normal trace across $\Gamma$). This information
will appear to be a consequence of the boundary layer equations (see
the Paragraph 'transmission conditions' below).
\item[-] we also have to prescribe the behaviour of the macroscopic terms
in the vicinity of the two corner points $\mathbf{x}_O^\pm$. %
This information will be given through the matching conditions and
will be provided through the iterative construction of the first terms
(see  Section~\ref{SectionMainMatching}, and
Section~\ref{sec:constr-first-terms}).
\end{itemize}

\textbf{Boundary layer corrector equations.} The boundary layer
correctors $\Pi_{n,q}^\delta(x_1,X_1,X_2)$ (also sometimes denoted as
{\em periodic correctors}) are assumed, as usual in the periodic
homogenization theory, to be $1$-periodic with respect to the scaled
tangential variable $X_1$. They are defined in the infinite
periodicity cell
$\mathcal{B} = \left\{ (0,1) \times \R \right\} \setminus
\overline{\widehat{\Omega}_{\hole}}$
(\cf~Fig.~\ref{fig:PeriodicityCell}) and satisfy
\begin{equation}
  \label{PeriodicCorrectorEquations}
  \left\lbrace
    \begin{aligned}
      - \Delta_{\bX} \Pi_{n,q}^\delta(x_1,\bX)  & = &
      F_{n,q}^\delta(x_1,\bX) & \quad \mbox{in} \;
      \mathcal{B}, \\
      \partial_{\bn} \Pi_{n,q}^{\delta}(x_1,\bX) &  = & \;\; - \partial_{x_1} \Pi_{n,q-1}^{\delta}(x_1,\bX)  \mathbf{e}_1 \cdot \bn & \quad \mbox{on} \;  \partial \widehat{\Omega}_{\hole},
    \end{aligned}
  \right.
\end{equation}
in which $\bn$ denotes the normal vector on
$\partial \widehat{\Omega}_{\hole}$. The source terms
$F_{n,q}^\delta$, depending on the macroscopic terms $u_{n,p}^\delta$
for $p\leq q$ (see
\ref{SectionTransmissionConditionsBoundaryLayer}), are given
by
\begin{align*}
  F_{n,0}^\delta(x_1, \mX) &= \sum_{\pm} u_{n,0}^\delta(x_1,0^\pm) \chi_\pm''(X_2), \\
  F_{n,1}^\delta(x_1, \mX) &= \sum_{\pm} \Big\{ \partial_{x_2} u_{n,0}^\delta(x_1,0^\pm) (2 \chi'_\pm(X_2) + X_2
    \chi''_\pm+(X_2)) \\ & \quad + u_{n,1}^\delta(x_1,0^\pm) \chi''_\pm(X_2) \Big\},
\end{align*}
where the cut-off function $\chi_+$
(resp. $\chi_-$) is the restriction of $\chi$ for $t \in \IR_+$
(resp. $t \in \IR_-$),~\ie
\begin{equation}
  \label{eq:defchipm}
  \chi_\pm(t) = \chi(t) \mathds{1}_{\IR_\pm}(t).
\end{equation}

\noindent In addition,
the periodic correctors are required to be super-algebraically decaying
as the scaled variable
$X_2$ tends to $\pm \infty$ (they decay faster than any power of $X_2$). More precisely, for any $(k, \ell) \in \N^2$, we impose that 
\begin{equation}
  \label{eq:exponential_decaying}
  \lim_{|X_2| \rightarrow + \infty}
  X_2^k \partial_{X_2}^\ell
  \Pi_{n,q}^\delta =0.
\end{equation}

\textbf{Transmission conditions.}
Enforcing the decaying condition~\eqref{eq:exponential_decaying} leads
to the missing transmission conditions for the macroscopic terms
$u_{n,q}^\delta$ on $\Gamma$. The complete procedure to obtain these
transmission conditions is classical and is fully described in
Section~\ref{SectionTransmissionConditionsBoundaryLayer}.  In this paragraph,
we restrict ourselves to the statement of the results. To do so, we
introduce the definition of the jump and mean values of a function $u$
across $\Gamma$ (for a sufficiently smooth function $u$ defined in a
vicinity of~$\Gamma$):
\begin{align}\label{definitionMeanJump}
  \xjump{u}_{\Gamma}(x_1) = \lim_{h\rightarrow 0^+}
  \left(  u(x_1, h) -  u(x_1, -h) \right), 
  \xavrg{u}_{\Gamma}(x_1) = \frac{1}{2} \lim_{h\rightarrow 0^+
  } \left(  u(x_1,h) +  u(x_1, -h) \right).
\end{align}

\noindent For $n \in \{0,1,2,3\}$, we obtain that the terms $u_{n,0}^\delta$ do not jump across $\Gamma$,  \emph{i.e.}
\begin{equation}\label{eq:macroscopic_u_n_0}
\left[ u_{n,0}^\delta \right]_{\Gamma} = \left[ \partial_{x_2}  u_{n,0}^\delta \right]_{\Gamma} = 0 \quad \mbox{on} \, \Gamma.
\end{equation}
By contrast, for $n \in \{0,1,2\}$, the terms $u_{n,1}^\delta$ satisfy non-homogeneous jump conditions: 
 \begin{equation}
  \label{eq:macroscopic_u_n_1}
  \left \lbrace
   \begin{array}{r@{\;}ll}
      \left[ u_{n,1}^\delta \right]_{\Gamma} &=  \mathcal{D}_{1}
      \, \partial_{x_1}  \langle u_{n,0}^\delta \rangle_{\Gamma}   \, + \, \mathcal{D}_{2}
      \, \langle \partial_{x_2} u_{n,0}^\delta \rangle_{\Gamma}&\quad \mbox{on} \,
      \Gamma ,\\[0.5em]
      \left[ \partial_{x_2} u_{n,1}^\delta \right]_{\Gamma} & = \mathcal{N}_{1} \, \langle
      u_{n,0}^\delta \rangle_{\Gamma}  + \mathcal{N}_{2} \, \partial_{x_1}^2 \langle
      u_{n,0}^\delta \rangle_{\Gamma} + \mathcal{N}_{3} \, \partial_{x_1} \langle \partial_{x_2}
      u_{n,0}^\delta \rangle_{\Gamma}   &\quad \mbox{on} \, \Gamma.
    \end{array}
  \right.
\end{equation}
Here, the quantities $ \mathcal{D}_{i}$ ($i \in \{ 1,2\}$) and
$\mathcal{N}_{i}$ ($i \in \{ 1,2,3\}$), defined by
\eqref{Sautu1}-\eqref{Sautdu1} are complex-valued constants coming
from the periodicity cell
problems~\eqref{PeriodicCorrectorEquations}. They only depend on
$\hat{k}$ and on the geometry of the periodicity cell.

\subsubsection{Near field expansions}
Let us now describe the asymptotic expansion valid in the  two near field zones, namely in the vicinity of the two reentrant corners $\mx_{O}^\pm$ (dark gray areas
in~Fig.~\ref{DessinAsymptotic}).  In these areas, the solution varies rapidly in all
directions. 
Therefore, we shall see that
\begin{multline}
  \label{NFExpansion}
  u^\delta(\mx) = U_{0,0,\pm}^\delta\left(\frac{\mathbf{x}-\mathbf{x}_{O}^\pm}{\delta}\right) + \delta^{\lambda_1} \,
  U_{1,0,\pm}^\delta\left(\frac{\mathbf{x}-\mathbf{x}_{O}^\pm}{\delta}\right) + \delta \,
  U_{0,1,\pm}^\delta\left(\frac{\mathbf{x}-\mathbf{x}_{O}^\pm}{\delta}\right) \\ + \delta^{\lambda_2} \,
  U_{2,0,\pm}^\delta\left(\frac{\mathbf{x}-\mathbf{x}_{O}^\pm}{\delta}\right) + \delta^{\lambda_1+1} \,
  U_{1,1,\pm}^\delta\left(\frac{\mathbf{x}-\mathbf{x}_{O}^\pm}{\delta}\right) + \delta^{\lambda_3} \,
  U_{3,0,\pm}^\delta\left(\frac{\mathbf{x}-\mathbf{x}_{O}^\pm}{\delta}\right) + O(\delta^2)
\end{multline}
for some near field terms $U_{n,q,\pm}^\delta$ defined in the fixed unbounded domains 
\begin{equation}
  \label{definitionOmegaHatpm}
  \widehat{\Omega}^- =\mathcal{K}^- \setminus  \bigcup_{\ell \in \N}
  \left\{ \overline{\widehat{\Omega}_{\hole}} + \ell \be_{1} \right\},
  \quad  \widehat{\Omega}^+ =\mathcal{K}^+ \setminus  \bigcup_{\ell \in \N^\ast}
  \left\{  \overline{\widehat{\Omega}_{\hole}} - \ell \be_{1} \right\}
\end{equation}
shown in Figure~\ref{fig:HatOmegaMoins} and \ref{fig:HatOmegaPlus},
where $\mathcal{K}^\pm$ are the conical domains 
\begin{equation}
  \label{eq:definition_mathcal_K}
  \mathcal{K}^\pm = \left\{ \mathbf{X} = R^\pm  ( \cos \theta^\pm, \sin \theta^\pm),  R^\pm\in \R^\ast_+, \theta^\pm \in I^\pm\right\} \subset\, \R^2
\end{equation}
of angular sectors $I^+ = (0, \Theta)$ and $I^- = (\pi - \Theta, \pi)$. The domains  $\widehat{\Omega}^\pm$ consist of the angular domains  $\mathcal{K}^\pm$ minus a infinite half line of equi-spaced similar canonical obtacles. In particular, 
if the domain $\widehat{\Omega}_{\hole}$ is symmetric with respect to the
axis $X_1 = 1/2$, then the domain $\widehat{\Omega}^-$ is nothing but the domain
$\widehat{\Omega}^+$ mirrored with respect to the axis $X_1=0$.
However, this is not the case in general. \\

Similarly to the far field terms the
 near field terms $U_{n,q,\pm}^\delta$ might also have a polynomial dependence with respect to $\ln
\delta$.\\
\begin{figure}[htbp]
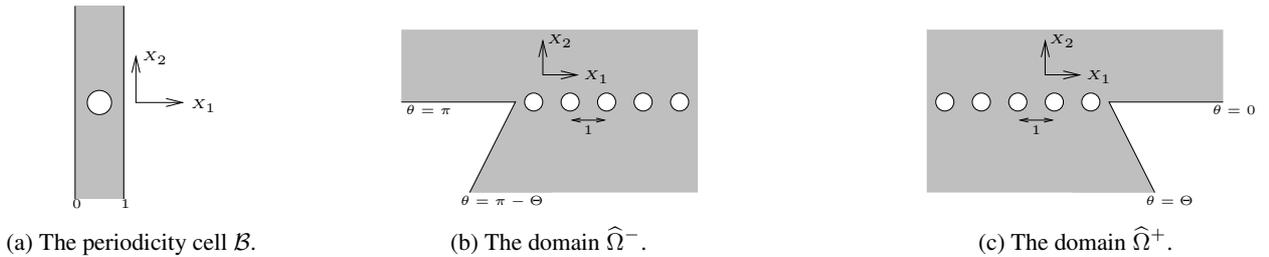

  \centering
  \begin{subfigure}[b]{0.22\textwidth}
    \centering
    \input{domain_canonical.pspdftex}% Exported to 80%
    \caption{The periodicity cell $\mathcal{B}$.}
    \label{fig:PeriodicityCell}
  \end{subfigure}
  \begin{subfigure}[b]{0.38\textwidth}
    \centering
    \input{domain_nf_omega_minus.pspdftex}% Exported to 80%
    \caption{The domain $\widehat{\Omega}^-$.}
    \label{fig:HatOmegaMoins}
  \end{subfigure}
  \begin{subfigure}[b]{0.38\textwidth}
    \centering
    \input{domain_nf_omega_plus.pspdftex}% Exported to 80%
    \caption{The domain $\widehat{\Omega}^+$.}
    \label{fig:HatOmegaPlus}
  \end{subfigure}
  \caption{The periodicity cell $\mathcal{B}$ and the normalized
    domains $\widehat{\Omega}^\pm$.}\label{fig:dessinsDomaines2}
\end{figure}

 Inserting the near field ansatz~\eqref{NFExpansion} into the Helmholtz
equation~\eqref{eq:perturbed_Helmholtz} and separating
formally the different powers of $\delta$, it is
easily seen that the near field term $U_{n,q}^\delta$ satisfies
\begin{equation}\label{NearFieldEquation}
\left \lbrace
\begin{aligned}
  - \Delta_{\mX} U_{n,q,\pm}^\delta & = (\widehat{k}^\pm)^2(\bX)
  U_{n,q-2,\pm}^\delta & & \quad \mbox{in} \;
  \widehat{\Omega}^{\pm}, \\
  \partial_\bn U_{n,q,\pm}^\delta & = 0 & & \quad \mbox{on}\,
  \partial \widehat{\Omega}^\pm,
\end{aligned}
\right.
\end{equation}  
where the perturbed wave number $\widehat{k}^\pm(\bX)$ is given by
\begin{equation}
  \label{eq:perturbation_wavenumber}
  \widehat{k}^\pm(\bX) =
  \begin{cases}
    \widehat{k}(\bX) \; & \mbox{if } \pm X_1 < 0,\\
    k_0 \; & \mbox{otherwise}.
  \end{cases}
\end{equation}
Again, Equation~\eqref{NearFieldEquation} does not define $U_{n,q,\pm}^\delta$ entirely because its (possibly increasing) behaviour towards infinity is missing. %
This behaviour will be given through the matching conditions.

\subsubsection{Matching principle}\label{SectionMainMatching} To link the far and near fields expansions~\eqref{FFExpansion} and \eqref{NFExpansion}, we
assume that they are both valid in two intermediate areas
${\Omega}^{\delta,\pm}_\mathcal{M}$ (dark shaded in
Fig.~\ref{DessinAsymptotic}) of the following form:
\begin{equation}
  \label{eq:matching_areas}
  {\Omega}^{\delta,\pm}_{\mathcal{M}} = \left\{ \mathbf{x} = (x_1,x_2) \in \Omega^{\delta},
    \sqrt{\delta} \leq d(\mathbf{x}, \mathbf{x}_O^\pm) \leq 2\,\sqrt{\delta} \right\},
\end{equation}
where $d$ denotes the usual Euclidian distance. The reader might just
keep in mind that they correspond to a neighborhood of the corners
$\mathbf{x}_{O}^\pm$ of the re-entrant corners for the far field terms
(macroscopic and boundary layer correctors) and to a neighborhood
of infinity, \ie, $R^\pm \to \infty$, for the near field terms
(expressed in the scaled variables). \\

\noindent In practice, for a given order $N_0\geq0$,  we make a \emph{formal}
identification between \eqref{FFExpansion} and \eqref{NFExpansion}: 
\begin{equation}
  \label{eq:matching_conditions_global_main}
  \sum_{\lambda_n+q < N_0} \delta^{\lambda_n+q} \, u_{\text{FF},n,q}^\delta
  (\mx) \approx \sum_{\lambda_n+q < N_0}
  \delta^{\lambda_n+q} \, U_{n,q,\pm}^\delta 
  \left(\frac{\mx-\mx_{O}^\pm}{\delta}\right). % +O(\delta^{N_0}) \, .
\end{equation}
The previous relation can be seen in two different scales (the
macroscopic scale and the near field scale) and will relate, on the
one hand, the regular part of the far field terms to the increasing
behaviour of the near field terms, and, on the other hand, the
decreasing behaviour of the near field terms to the singular behaviour
of the far field terms. The matching will be conducted for the first
terms order by order in Section~\ref{sec:constr-first-terms}.
  
\begin{remark} A crucial point for the matching procedure is that we
  match only the far and near field expansions away from the layer,
  \ie, $\theta^- \not= 0$ and $\theta^+ \not= \pi$.  Indeed, thanks to
  the linearity of the canonical cell problem, the periodic correctors
  appear to be a by-product of the macroscopic terms (see
  Section~\ref{SectionTransmissionConditionsBoundaryLayer}). As a consequence,
  as soon as the two series match away from the layer, they also match
  in the vicinity of the layer (see~Section~\ref{SectionRaccordCouche}).
\end{remark}

\subsection{Main results}\label{SubsectionMainResults}
\label{sec:intr-sett-probl:outlook}

\subsubsection{Error estimates}
\label{sec:error_estimates_main}
Collectiong the macroscopic
problems~\eqref{FFVolum}-\eqref{eq:FF_macro_bc}-\eqref{eq:macroscopic_u_n_0}-\eqref{eq:macroscopic_u_n_1},
the boundary layer problems \eqref{PeriodicCorrectorEquations}, the
near field problems~\eqref{NearFieldEquation}, and the matching
conditions~\eqref{eq:matching_conditions_global_main} permits us to
define in step by step the first terms of the asymptotic expansion up
to order $2$ (see Section~\ref{sec:constr-first-terms}). Then, our
main theoretical result deals with the convergence of the truncated
macroscopic series in a domain that excludes the two corners and the
periodic thin layer:
\begin{theorem}[Error estimates of the truncated macroscopic expansion]
  \label{theo:error_estimate_optimal}
 Let $\Theta \in (\pi, 2\pi)$, and,  for a given number $\alpha > 0$, let
  \begin{equation*}
    \Omega_\alpha = \Omega^\delta \setminus (-\LBottom-\alpha,\LBottom+\alpha) \times
    (-\alpha,\alpha).
  \end{equation*}
  There exists a constant $\delta_0 > 0$, a constant $C >0$ and a  integer  $\kappa \in \{ 0, 1\}$ 
  such that for any
  $\delta \in (0, \delta_0)$,
  \begin{align}
    \label{eq:EstimationOptimale_0}
    \xnorm{u^\delta - u_{0,0}}{\Hone(\Omega_\alpha)} &\leqslant C \delta, \\
    \label{eq:EstimationOptimale_1}
    \xnorm{u^\delta - u_{0,0} - \delta u_{0,1}}{\Hone(\Omega_\alpha)} &\leqslant C \delta^{\lambda_2}.
  \end{align}
 and,  
  \begin{align}\label{EstimationOptimale}
   \xnorm{u^\delta - u_{0,0} - \delta u_{0,1} -
    \delta^{\lambda_2} u_{2,0}}{\Hone(\Omega_\alpha)} & \leqslant C \delta^2
    (\ln \delta)^\kappa, &  \mbox{if} \quad \Theta \leq \frac{3\pi}{2}, \\
  %If $\Theta \in ( \frac{3\pi}{2}, 2 \pi)$, there exists a constant $C_3 > 0$
    \xnorm{u^\delta - u_{0,0} - \delta u_{0,1} -
      \delta^{\lambda_2} u_{2,0} - \delta^{\lambda_3}
      u_{3,0}}{\Hone(\Omega_\alpha)} & \leqslant C \delta^2 & \mbox{if} \quad \Theta \in (\frac{3\pi}{2}, 2\pi).
  \end{align} 
\end{theorem}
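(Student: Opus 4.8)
The plan is to follow the classical strategy for justifying matched asymptotic expansions: construct a global approximation $u_{\mathrm{app}}^\delta$ by gluing together the far field expansion (macroscopic terms plus boundary layer correctors) and the near field expansions near $\mathbf{x}_O^\pm$ via a partition of unity, insert it into the Helmholtz operator, and estimate the residual. First I would fix the order $N_0$ (depending on which estimate is being proved: $N_0 = \lambda_1$, $N_0 = \lambda_2$, $N_0 = 2$, or $N_0 = \lambda_3$ for the four cases) and define $u_{\mathrm{app}}^\delta$ using cut-off functions that transition between the far field regime $d(\mathbf{x},\mathbf{x}_O^\pm) \gtrsim \sqrt\delta$ and the near field regime $d(\mathbf{x},\mathbf{x}_O^\pm) \lesssim \sqrt\delta$, together with the tangential cut-off $\chi(x_2/\delta)$ already introduced in~\eqref{def_uFFnq}. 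By construction each term of the expansion satisfies its canonical problem exactly, so the residual $r^\delta := -\Delta u_{\mathrm{app}}^\delta - (k^\delta)^2 u_{\mathrm{app}}^\delta$ is supported where the cut-offs have nontrivial derivatives — in the matching zones $\Omega^{\delta,\pm}_{\mathcal M}$, in the transition annuli $L-2\delta < |x_1| < L+2\delta$, and where the boundary layer correctors are truncated. I would also check that $u_{\mathrm{app}}^\delta$ satisfies the boundary conditions on $\Gamma^\delta$, $\Gamma_N$ and $\Gamma_R$ up to a small boundary residual.

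The core estimate is a bound of the form $\|r^\delta\|_{(\Hone(\Omega^\delta))'} + (\text{boundary residuals}) \leq C\,\delta^{N_0}(\ln\delta)^{\kappa}$, after which Proposition~\ref{prop:existence_uniqueness_u_delta} (stability of the exact problem, uniform in $\delta$) applied to $u^\delta - u_{\mathrm{app}}^\delta$ gives $\|u^\delta - u_{\mathrm{app}}^\delta\|_{\Hone(\Omega^\delta)} \leq C\delta^{N_0}(\ln\delta)^\kappa$. To estimate $r^\delta$ one uses: (i) in the matching zone, the difference between the truncated far field and near field series is controlled because the matching conditions~\eqref{eq:matching_conditions_global_main} force the expansions to agree up to order $N_0$, so the leftover is the first omitted term evaluated at $d \sim \sqrt\delta$, giving a factor $\delta^{N_0/2 \cdot (\text{exponent})}$ — the choice $d\in[\sqrt\delta,2\sqrt\delta]$ is exactly what balances the far field remainder $O((\sqrt\delta/\delta)^{?})$ against the near field remainder; (ii) the super-algebraic decay~\eqref{eq:exponential_decaying} of the $\Pi_{n,q}^\delta$ makes the boundary layer truncation error negligible; (iii) the polynomial-in-$\ln\delta$ dependence of the terms produces the $(\ln\delta)^\kappa$ factor, and $\kappa=0$ precisely when no logarithms enter, which happens for $\Theta \in (3\pi/2, 2\pi)$ once $u_{3,0}$ is included because then $\lambda_3 < 2$ and the expansion is "complete" up to $\delta^2$ without a resonant log, whereas for $\Theta \leq 3\pi/2$ one stops at $\delta^{\lambda_2}$ and the $\delta^2$ remainder may carry a log. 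Finally, restricting from $\Omega^\delta$ to $\Omega_\alpha$ removes the near field and boundary layer contributions: on $\Omega_\alpha$ the cut-offs are all equal to $1$ or $0$ so $u_{\mathrm{app}}^\delta$ reduces to $u_{0,0} + \delta u_{0,1} + \dots$ plus correctors that are super-algebraically small there, yielding the stated estimates.

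The main obstacle I anticipate is controlling the residual in the matching and transition zones with the sharp power of $\delta$, including the correct $(\ln\delta)^\kappa$ bookkeeping. This requires: precise decay/growth estimates on the near field terms $U_{n,q,\pm}^\delta$ towards infinity (their admissible increasing behaviour must match exactly the singular behaviour $R^{-\lambda_n}$-type terms of the macroscopic parts near the corner — this rests on the corner analysis in the conical domains $\widehat\Omega^\pm$, analogous to Nazarov's work cited in the introduction), uniform estimates on the macroscopic terms $u_{n,q}^\delta$ near $\mathbf{x}_O^\pm$ in weighted Sobolev norms, and careful commutator estimates $[\Delta,\chi]$ where $\chi$ depends on $\delta$ (each derivative of a cut-off at scale $\sqrt\delta$ costs $\delta^{-1/2}$, each derivative of $\chi(x_2/\delta)$ costs $\delta^{-1}$, and one must verify these losses are absorbed by the gain from the expansion order). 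A secondary technical point is ensuring $u_{\mathrm{app}}^\delta \in \Hone(\Omega^\delta)$ with the right boundary conditions so that Proposition~\ref{prop:existence_uniqueness_u_delta} applies directly; minor corrections (e.g. subtracting a harmless lifting of the boundary residual on $\Gamma_R$) may be needed. All the ingredients are available from Sections~\ref{SectionTransmissionConditionsBoundaryLayer}--\ref{sec:constr-first-terms} and from the Poisson-case analysis in~\cite{ResearchReportAKB,Delourme.Semin.Schmidt:2015}, so the proof is a Helmholtz adaptation of that scheme, the new feature being the zeroth-order term $(k^\delta)^2 u$ in the residual, which is handled since $k^\delta$ is bounded and converges to $k_0$ a.e.
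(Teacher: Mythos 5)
Your overall strategy mirrors the paper's: build a global approximation $u_{N_0}^\delta$ by gluing the far field (macroscopic plus boundary layer) and near field expansions with cut-offs, estimate the residual, and apply the $\delta$-uniform stability of Proposition~\ref{prop:existence_uniqueness_u_delta} to $u^\delta - u_{N_0}^\delta$. However, there is a quantitative gap that makes the proposal fail as written. You claim the residual bound is $O(\delta^{N_0}(\ln\delta)^\kappa)$ and accordingly choose $N_0 \in \{\lambda_1,\lambda_2,2,\lambda_3\}$ for the four estimates. This is far too optimistic. The residual is concentrated in the matching annuli $|\mx-\mx_O^\pm|\sim\sqrt\delta$, where the truncated far- and near-field series agree only up to the first omitted terms; each derivative of the radial cut-off $\chi(|\mx-\mx_O^\pm|/\sqrt\delta)$ costs a factor $\delta^{-1/2}$, and evaluating a term of nominal order $\delta^{\lambda_n+q}$ at $r\sim\sqrt\delta$ halves the exponent it delivers. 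Carrying this out (Section~\ref{SectionErrorEstimates}) gives
\begin{equation*}
\xnorm{(\Delta+(k^\delta)^2)e_{N_0}^\delta}{\Ltwo(\Omega^\delta)} + \xnorm{\partial_\bn e_{N_0}^\delta}{\Ltwo(\Gamma^\delta)} \leqslant C(\ln\delta)^\kappa\,\delta^{N_0/2-5/2},
\end{equation*}
a loss of $\delta^{-5/2}$ relative to your claim. All of your proposed $N_0$ are smaller than $5$, so $N_0/2 - 5/2 < 0$ and the bound is vacuous.

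The remedy, absent from your write-up, is to push the construction to a much higher order $N_0$ (e.g.\ $N_0 \geq 9$ for the $\delta^2$ estimates, which requires constructing several further terms beyond those of Section~\ref{sec:constr-first-terms}; the paper notes the same iterative scheme produces them), obtain $\xnorm{e_{N_0}^\delta}{\Hone(\Omega^\delta)} \leq C\delta^2(\ln\delta)^\kappa$ from the estimate above, and only then use the triangle inequality on $\Omega_\alpha$: there $e_{N_0}^\delta$ coincides with $u^\delta - \sum_{(n,q)\in\IN_{N_0}}\delta^{\lambda_n+q}u_{n,q}^\delta$, and each extraneous term $\delta^{\lambda_n+q} u_{n,q}^\delta$ of higher order contributes $O(\delta^{\lambda_n+q})$ in $\Hone(\Omega_\alpha)$, since the macroscopic fields are smooth there (away from the layer and the corners). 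Your remark about the far field remainder being $O((\sqrt\delta/\delta)^{?})$ shows you sensed the $\sqrt\delta$ balancing issue, but leaving that exponent undetermined hides exactly the loss that forces the overshoot-then-subtract structure of the proof.
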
  
The proof of the previous theorem, although rather classical (see \eg
Chapter 4 in Ref.~\cite{LivreNazarov1}), is conducted in Section~\ref{SectionErrorEstimates}: it is based on the construction of an approximation global approximation (defined in~\eqref{eq:global_approximation})  of $u^\delta$  defined in the whole domain $\Omega^\delta$.

\subsubsection{Numerical justification}
\label{sec:numer-just}

We illustrate numerically the results of
Theorem~\ref{theo:error_estimate_optimal} using the finite elements
method with the numerical C++ library
Concepts\cite{conceptsweb,Frauenfelder.Lage:2002}. For both, the exact
and macroscopic problems, we rely on meshes geometrically refined
towards the corners and varying polynomial
degree\cite{Schwab1998,Schmidt.Kauf:2009}. We consider the geometry
sketched in the left part of Figure~\ref{fig:error_estimates} for
$\delta = 0.25$, for which the inner angle $\Theta = \frac{3\pi}{2}$
at the two corners $\mathbf{x}_{O}^\pm$.  The upper rectangle
representing a wave-guide is $\OmegaTop = (-2.5,2.5) \times (0,1)$ and
the lower one representing a chamber is
$\OmegaBottom = (-0.5,0.5) \times (-1,0)$.  The canonical hole
$\widehat{\Omega}^{\pm}_{\hole}$ is the disk centered at $(0.5,0)$
with diameter equal to $0.3$. We consider a homogeneous wave number
$k^\delta = k_0 = 5 \pi$.
In Figure~\ref{fig:error_estimates} we show the difference  between the
exact solution $u^\delta$ and the macroscopic expansion of different
order, using that $u_{1,0}=u_{1,1}=0$, in the
$L^2(\Omega_\alpha)$-norm for $\alpha=0.25$ as a function of $\delta$
where $\delta = 1/4, 1/8, ..., 1/128$. As might be expected, we exactly recover the convergence rate stated in Theorem~\ref{theo:error_estimate_optimal}.
%The numerical approximation
%verifies that the order of convergence of the macroscopic expansions
%stated in Theorem~\ref{theo:error_estimate_optimal} is sharp, and the error
%behaves in powers of $\delta$ as the first neglected term.
\begin{figure}[htbp!]
  \centering
    \begin{tikzpicture}
      \pgfplotsset{ legend style={
          at={(1.00,0.2)},
          font=\footnotesize, anchor=east, align=left, row sep=0.2cm} %
      } %
      \pgfmathparse{4}\let\n\pgfmathresult;
      \pgfmathparse{2. / \n}\let\dd\pgfmathresult;
      \pgfmathparse{0.15 * \dd}\let\r\pgfmathresult;
       
      % Rectangles
      \fill[gray!75!white, xshift=3.2cm, yshift=5.1cm,scale=0.5] (-5 ,0) rectangle (5 ,2);
      \fill[gray!75!white, xshift=3.2cm, yshift=5.1cm,scale=0.5] (-1,-2) rectangle (1, 0);
      \draw[black, xshift=3.2cm, yshift=5.1cm,scale=0.5] (-5,2) -- (5,2) -- (5,0) -- (1,0) -- (1,-2) -- (-1,-2) -- (-1,0) --(-5,0) -- (-5,2);
      \node[xshift=4.5cm, yshift=5.6cm] (0,0) {$\Omega^\delta$};
       
      % Interior circles
      \foreach \x in {1,...,\n}
       {            
 	\pgfmathparse{\dd*(\x-0.5)-1}\let\y\pgfmathresult;
 	\filldraw[fill=white, draw=black, xshift=3.2cm, yshift=5.1cm,scale=0.5] (\y,0) circle (\r);
      }

      \begin{axis}[width=0.9\textwidth, height=0.54\textwidth, %
        xmin=0.002, xmax=0.5, ymin=0.00005, ymax=1.5,
        xlabel=Distance $\delta$ between two consecutive holes,
        ylabel=Modelling error,
        xmode=log,
        ymode=log,
        cycle list name=mylist,
        legend style={draw=none}, legend cell align=left
        ]
        \addplot table [x expr = 1. / \thisrow{N}, y = error0] {homogeneous.dat};
        \addlegendentry{$\left\lVert u^\delta - u_{0,0} \right\rVert_{L^2(\Omega_{0.25})}$};
        \addplot table [x expr = 1. / \thisrow{N}, y = error1] {homogeneous.dat};
        \addlegendentry{$\left\lVert u^\delta - u_{0,0} - \delta
            u_{0,1} \right\rVert_{L^2(\Omega_{0.25})}$};
        \addplot table [x expr = 1. / \thisrow{N}, y = error2] {homogeneous.dat};
        \addlegendentry{$\left\lVert u^\delta - u_{0,0} - \delta
            u_{0,1} - \delta^{4/3} u_{2,0} \right\rVert_{L^2(\Omega_{0.25})}$};
        % \addplot[red!80!black,dashed] table [x expr = 1. / \thisrow{N}, y = error3] {datas/homogeneous.dat};
        % \addlegendentry{$\left\lVert u^\delta - u_{0,0} - \delta
        %     \tilde{u}_{0,1} \right\rVert_{L^2(\Omega_{0.25})}$};
        % \addplot[red!80!black,mark=+, only marks] table [x expr = 1. / \thisrow{N}, y = error3] {datas/homogeneous.dat};

        % Axis
        %  64 0.0135221  64 0.0078556  32 6.5547e-003
        % 256 0.0135221  64 0.0012993  32 4.5856e-004
        % 256 0.0039325 256 0.0012993 128 4.5856e-004

        % Axis 1 (slope 0.96) : 0.0076567 / 2^(0.96) = 0.003936
        \draw
        (axis cs:0.00390625,0.003936)
        |- (axis cs:0.0078125,0.0076567)
        node[near start,left]{{\small$0.96$}};
        % Axis 2 (slope 1.33)
        % Starting point : 
        %    x = 0.0078125
        %    y = 0.0029982
        % End point :
        %    x = 1/64*sqrt(2) = 0.022097
        %    y =  0.0029982*(2*sqrt(2))^1.33 = 0.011951
        \draw
        (axis cs:0.022097,0.011951)
        |- (axis cs:0.0078125,0.0029982)
        node[near start,right]{{\small$1.33$}};
        % Axis 3 (slope 1.92) : 1.7337e-3/4^(1.92) = 1.2447e-04
        \draw
        (axis cs:0.015625,1.7337e-3)
        |- (axis cs:0.0039062,0.00012107) 
        node[near start,right]{{\small$1.92$}};
        % Axis 1 (slope 0.96) : 0.0076567 / 2^(0.96) = 0.003936
%         \draw
%         (axis cs:0.00390625,0.003936)
%         |- (axis cs:0.0078125,0.0076567)
%         node[near start,left]{{\small$0.96$}};
      \end{axis}
    \end{tikzpicture}
  \caption{The numerically computed errors of macroscopic expansions 
    truncated at different orders in dependence of $\delta$. The computational domain $\Omega^\delta$ 
    is sketched for $\delta = 0.25$.
  }
  \label{fig:error_estimates}
\end{figure}
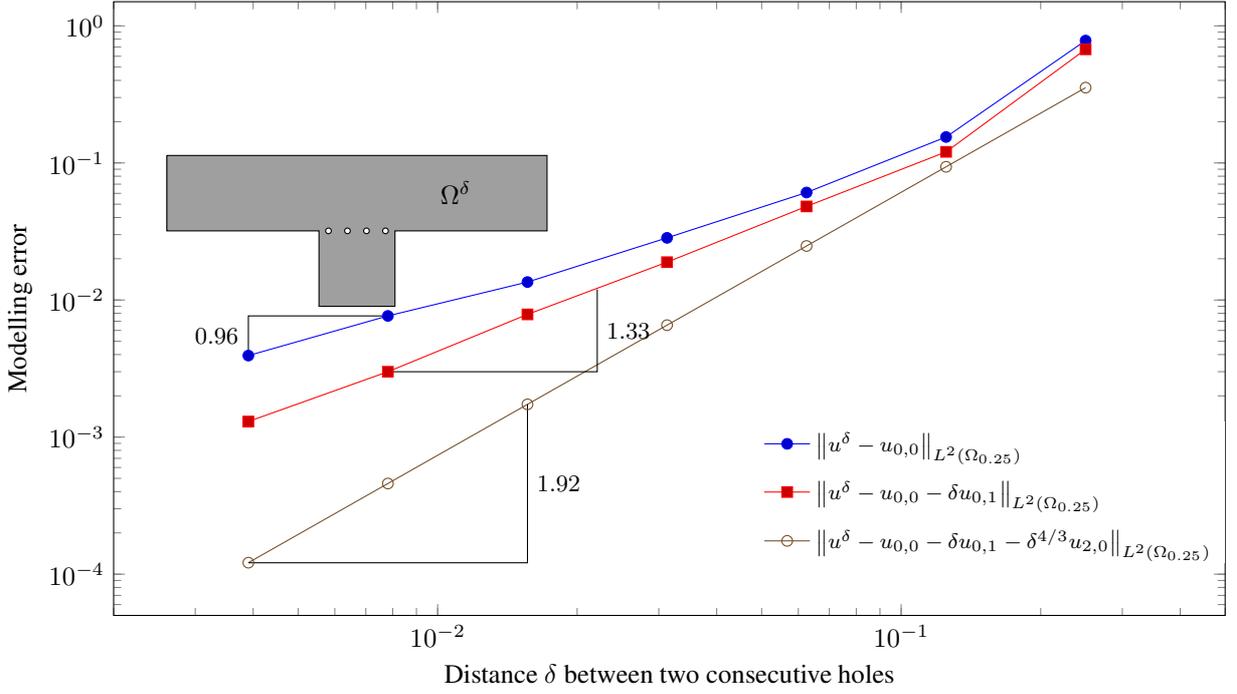

\section{Analysis of the far field problems: transmission problem,  boundary layer problems and derivation of the transmission
  conditions}
\label{SectionTransmissionConditionsBoundaryLayer}

This section is dedicated to the analysis of the far-field problems. In Section~\ref{sec:gener-results-exist} and Section~\ref{SubSectionPropBL}, we first recall the functional frameworks that will allow us to define the macroscopic terms and boundary layer terms. Then,  Section~\ref{SubSectionDerivationu0u1} is dedicated to the \emph{formal} derivation of the  transmission conditions~\eqref{eq:macroscopic_u_n_0}-\eqref{eq:macroscopic_u_n_1}
for the macroscopic fields $u_{n,q}^\delta$ across $\Gamma$.

\subsection{General results of existence for transmission problem}
\label{sec:gener-results-exist}
The macroscopic fields satisfy transmission problems of the following form (cf.~\eqref{FFVolum}-\eqref{eq:FF_macro_bc}-\eqref{eq:macroscopic_u_n_0}-\eqref{eq:macroscopic_u_n_1}):
\begin{equation}
    \label{eq:MacroscopicProblemModel}
    \left\lbrace
      \begin{aligned}
        - \Delta u - k_0^2 u &= \mathfrak{f} &\quad& \text{in } \OmegaTop \cup \OmegaBottom\ ,\\
        \left[ u \right]_{\Gamma} &= \mathfrak{g} &\quad&\text{on }\Gamma\ ,\\
        \left[ \partial_{x_2} u \right]_{\Gamma} &= \mathfrak{h}
        &\quad&\text{on }\Gamma\ , \\
        \nabla u\cdot \bn - \imath k_0 u^\delta &= \mathfrak{j} , &\quad& \text{on
        }\Gamma_R,\\
        \nabla u \cdot \bn &= 0, &\quad& \text{on }\Gamma_N.
      \end{aligned}
    \right.
  \end{equation}
To solve this transmission problem, we consider the space $\Hone(\OmegaTop \cup \OmegaBottom)$ defined by
\begin{equation}
  \label{eq:space_Hone_Omega}
  \Hone(\OmegaTop \cup \OmegaBottom) = \left\lbrace v \in
    \Ltwo(\Omega) \text{ such that } v_{|\OmegaTop} \in
    \Hone(\OmegaTop) \text{ and } v_{|\OmegaBottom} \in
    \Hone(\OmegaBottom) \right\rbrace,
\end{equation}
which incorporates discontinuous functions over $\Gamma$ (see
Figure~\ref{fig:Omega}). We denote by $\HonehalfG$ the restriction of
the trace of the functions $\Hone(\OmegaTop)$ to $\Gamma$. Naturally,
the space $\HonehalfG$ is also the restriction of the trace of the
functions of $\Hone(\OmegaBottom)$. We point out that general
transmission problems are investigated in~\cite{Nicaise} using the
Kondratev theory. In particular  the following well-posedness result
is proved (Theorem~3.4 and Theorem~3.5 in Ref.~\cite{Nicaise},
Proposition 3.6.1 in Ref.~\cite{TheseDelourme}). 
\begin{proposition}
  \label{prop:existence_uniqueness_macro}
  Let $\mathfrak{f} \in \Ltwo(\Omega)$, $\mathfrak{g} \in \HonehalfG$, $\mathfrak{h} \in \Ltwo(\Gamma)$,  and $\mathfrak{j} \in H^{-1/2}(\Gamma_R)$. Then, Problem~\eqref{eq:MacroscopicProblemModel}
  has a unique solution $u$ belonging to
  $\Hone(\OmegaTop \cup \OmegaBottom)$.
  \end{proposition}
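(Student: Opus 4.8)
The plan is to prove Proposition~\ref{prop:existence_uniqueness_macro} by the classical variational route — reduction to homogeneous trace jump, a Fredholm argument through a G{\aa}rding inequality, and uniqueness via a unique continuation principle — rather than by the Kondratev-type analysis of \cite{Nicaise,TheseDelourme} (which in addition yields the corner asymptotics, not needed here). First I would lift the trace jump: by the very definition of $\HonehalfG$ there is a $G\in\Hone(\OmegaTop)$ with $G|_\Gamma=\mathfrak g$, and extending $G$ by zero on $\OmegaBottom$ gives $G\in\Hone(\OmegaTop\cup\OmegaBottom)$ with $\left[G\right]_\Gamma=\mathfrak g$. Writing $u=u_0+G$, the unknown $u_0$ has vanishing jump across $\Gamma$, hence belongs to $\Hone(\Omega)$.

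Multiplying the equation by $\overline\varphi$ with $\varphi\in\Hone(\Omega)$ and integrating by parts on $\OmegaTop$ and $\OmegaBottom$ separately, the two boundary integrals over $\Gamma$ combine — thanks to the continuity of $\varphi$ across $\Gamma$ — into a single term involving $\left[\partial_{x_2}u\right]_\Gamma=\mathfrak h$, while the Robin and Neumann conditions on $\Gamma_R$ and $\Gamma_N$ are natural; this gives the weak formulation: find $u_0\in\Hone(\Omega)$ such that
\begin{equation*}
  a(u_0,\varphi):=\int_\Omega\Big(\nabla u_0\cdot\overline{\nabla\varphi}-k_0^2\,u_0\,\overline\varphi\Big)\,dx-\imath k_0\int_{\Gamma_R}u_0\,\overline\varphi\,ds=\ell(\varphi)\qquad\text{for all }\varphi\in\Hone(\Omega),
\end{equation*}
where $\ell$ collects the data $\mathfrak f$, $\mathfrak h$ (paired with the trace on $\Gamma$), $\mathfrak j$ (paired with the trace on $\Gamma_R$) and the lifting term $-a(G,\varphi)$. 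Under the stated regularity of the data, the trace theorem makes $\ell$ a bounded antilinear form on $\Hone(\Omega)$ and $a$ a bounded sesquilinear form. Moreover $a$ satisfies the G{\aa}rding inequality $\operatorname{Re}a(v,v)=\|\nabla v\|_{\Ltwo(\Omega)}^2-k_0^2\|v\|_{\Ltwo(\Omega)}^2\ge\|v\|_{\Hone(\Omega)}^2-(1+k_0^2)\|v\|_{\Ltwo(\Omega)}^2$, and since $\Omega$ is a bounded polygon, hence Lipschitz, both $\Hone(\Omega)\hookrightarrow\Ltwo(\Omega)$ and the trace $\Hone(\Omega)\to\Ltwo(\Gamma_R)$ are compact. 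Hence the operator associated with $a$ is a compact perturbation of an isomorphism, and by the Fredholm alternative (Theorem~6.6 in \cite{BrezisAnglais}) it is an isomorphism as soon as it is injective; everything therefore reduces to uniqueness.

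For uniqueness — which I expect to be the only genuinely delicate point — suppose $u$ solves \eqref{eq:MacroscopicProblemModel} with zero data, so that $u\in\Hone(\Omega)$. Testing against $u$ and taking the imaginary part kills the volume contributions and leaves $k_0\|u\|_{\Ltwo(\Gamma_R)}^2=0$, hence $u=0$ on $\Gamma_R$; the homogeneous Robin condition then forces $\nabla u\cdot\bn=0$ on $\Gamma_R$ as well. Since the coefficient $k_0$ is constant, $u$ is a real-analytic solution of $-\Delta u-k_0^2u=0$ in $\OmegaTop$ with vanishing Cauchy data on the segment $\Gamma_{R,-}$, which is non-characteristic; Holmgren's uniqueness theorem then gives $u\equiv0$ in the connected domain $\OmegaTop$. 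Consequently $u$ and $\partial_{x_2}u$ vanish on $\Gamma$ seen from $\OmegaTop$, and the homogeneous transmission conditions $\left[u\right]_\Gamma=\left[\partial_{x_2}u\right]_\Gamma=0$ transfer this to the $\OmegaBottom$ side; a second application of the unique continuation principle in the connected domain $\OmegaBottom$ (with Cauchy data on $\Gamma$) yields $u\equiv0$ there as well, whence $u\equiv0$. The whole difficulty is thus concentrated in the unique continuation step, which is standard for the Helmholtz operator (analytic coefficients, non-characteristic interfaces) and is carried out, for instance, in Lemma~3.4 of \cite{fente2} and Proposition~11.3 of \cite{Claeys.Delourme:2013}.
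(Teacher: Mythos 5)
Your proof is correct, and it takes a genuinely different route from the one the paper leans on. The paper does not write a proof at all: it cites Theorems~3.4--3.5 of Nicaise and Proposition~3.6.1 of Delourme's thesis, which establish well-posedness of general interface problems by Kondratev-type weighted-space analysis of the corner singularities. You instead give a self-contained elementary argument: lift the essential trace jump $\mathfrak g$ by an $\Hone(\OmegaTop)$ extension set to zero on $\OmegaBottom$, reduce to seeking $u_0\in\Hone(\Omega)$ with $\left[u_0\right]_\Gamma=0$, write the weak formulation on $\Hone(\Omega)$ in which $\mathfrak h$, $\mathfrak j$ and the Neumann/Robin conditions are natural, and invoke a G{\aa}rding inequality plus compactness of $\Hone(\Omega)\hookrightarrow\Ltwo(\Omega)$ and of the trace into $\Ltwo(\Gamma_R)$ to put yourself in the Fredholm framework; uniqueness then follows by testing with $u$, reading off $u=\partial_\bn u=0$ on $\Gamma_R$, applying Holmgren and analytic continuation in $\OmegaTop$, transferring the vanishing Cauchy data across $\Gamma$ via the homogeneous jump conditions, and repeating in $\OmegaBottom$. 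This is in fact exactly the style of argument the paper itself carries out in Appendix~\ref{AppendixStability} for Proposition~\ref{prop:existence_uniqueness_u_delta}, so your proof harmonises with the rest of the article. The trade-off between the two routes is that the Kondratev approach the paper cites is heavier but also delivers the corner asymptotics that the rest of Section~\ref{sec:constr-first-terms} uses (the radial expansions \eqref{eq:macroscopic_u_0_0_general_representation2}, \eqref{eq:expansion_u_0_1_expansion}, etc.), whereas your elementary argument establishes only $\Hone$-well-posedness — which is all the proposition actually claims. Two cosmetic points worth tidying if you write this up: the quantity you denote $-a(G,\varphi)$ uses the broken form $\sum_j\int_{\Omega_j}(\nabla G\cdot\overline{\nabla\varphi}-k_0^2G\overline\varphi)-\imath k_0\int_{\Gamma_R}G\overline\varphi$ extended to $\Hone(\OmegaTop\cup\OmegaBottom)$, not the form restricted to $\Hone(\Omega)$ (this is the standard abuse of notation and it is harmless, but say so); and for Holmgren near $\Gamma_{R,-}$ it is cleanest to first extend $u$ by zero across that face (possible because both traces vanish), deduce that the extension is a weak solution of the Helmholtz equation with constant coefficients in a strictly larger open set, and conclude by interior analyticity — this avoids having to justify classical smoothness up to the boundary.
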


\subsection{Existence and uniqueness result for the boundary layer problem}
\label{SubSectionPropBL}

\noindent The boundary layer correctors satisfy problems of the form (see~\eqref{PeriodicCorrectorEquations})
%\noindent Based on this functional framework, we consider the following problem: 
%find $\Pi \in \mathcal{V}^-(\mathcal{B})$ such that
\begin{equation}
  \label{CanoniqueBoundaryLayer}
  \left \lbrace
    \begin{array}{rcll}
      \dsp - \Delta_\mX \Pi & = & F & \quad \text{in } \mathcal{B}, \\[1ex]
      \dsp \partial_\bn \Pi & = & {G} &\quad \text{on } \partial \widehat{\Omega}_{\hole}, \\[1ex]
      \dsp \partial_{X_1} \Pi(0,X_2) & =&  \partial_{X_1} \Pi(1,X_2), & \quad X_2 \in \R.
    \end{array}
  \right. % F \in \left(\mathcal{V}^-(\mathcal{B})\right)' \quad  G \in L^2(\partial \widehat{\Omega}_{\hole}).
\end{equation}
together with the super-algebric decaying condition~\ref{eq:exponential_decaying}. 
In this section, we give a standard result of existence and uniqueness associated with this problem.  To do so, 
we introduce the two weighted Sobolev spaces
\begin{equation}
  \label{eq:prop_layer_existence_uniqueness_space}
  \mathcal{V}^\pm(\mathcal{B}) = \left\lbrace \Pi \in \mathrm{H}_{\text{loc}}^1(\mathcal{B}),
    \Pi(0,X_2) = \Pi(1,X_2),  \text{ and }  \left(\Pi\, w_e^\pm\right)  \in
    \Hone(\mathcal{B}) \right\rbrace, 
\end{equation}
where the weighting functions
$w_e^\pm(X_1,X_2)= \chi(X_2) \exp( \pm \frac{| X_2|}{2})$. The
functions of $\mathcal{V}^-(\mathcal{B})$ correspond to the periodic
(w.r.t. $X_1$) functions of $\Hone_{\text{loc}}(\mathcal{B})$ that grow
slower than $\exp( \frac{| X_2|}{2})$ as $X_2$ tends to $\pm \infty$.
By contrast, the functions of $\mathcal{V}^+(\mathcal{B})$ correspond
to the periodic functions of $\Hone_{\text{loc}}(\mathcal{B})$ decaying
faster than $\exp(- \frac{| X_2|}{2})$ as $X_2$ tends to $\pm \infty$.
Note also that
$\mathcal{V}^+(\mathcal{B}) \subset
\mathcal{V}^-(\mathcal{B})$.\\

%\noindent Based on this functional framework, we consider the following problem: 
%find $\Pi \in \mathcal{V}^-(\mathcal{B})$ such that
%\begin{equation}
%  \label{CanoniqueBoundaryLayer}
%  \left \lbrace
 %   \begin{array}{rcll}
 %     \dsp - \Delta_\mX \Pi & = & F & \quad \text{in } \mathcal{B}, \\[1ex]
 %     \dsp \partial_\bn \Pi & = & {G} &\quad \text{on } \partial \widehat{\Omega}_{\hole}, \\[1ex]
 %     \dsp \partial_{X_1} \Pi(0,X_2) & =&  \partial_{X_1} \Pi(1,X_2), & \quad X_2 \in \R.
 %   \end{array}
 % \right. F \in \left(\mathcal{V}^-(\mathcal{B})\right)' \quad  G \in L^2(\partial \widehat{\Omega}_{\hole}).
%\end{equation}
\noindent As soon as $F \in \left(\mathcal{V}^-(\mathcal{B})\right)' $ and $G \in L^2(\partial \widehat{\Omega}_{\hole})$, it is known that Problem~\eqref{CanoniqueBoundaryLayer} has (several)
solutions in $\mathcal{V}^-(\mathcal{B})$ (cf Proposition~2.2 of
Ref.~\cite{Nazarov205} and Section~5 of
Ref.~\cite{Claeys.Delourme:2013}).  More specifically,
Problem~\eqref{CanoniqueBoundaryLayer} has a finite dimensional kernel
of dimension $2$, spanned by the functions
$\mathcal{N} = \mathds{1}_{\mathcal{B}}$ and $\mathcal{D}$, where
$\mathcal{D}$ is the unique harmonic function of
$\mathcal{V}^-(\mathcal{B})$ such that there exists
$\mathcal{D}_\infty \in \R$ such that
$$\widetilde{\mathcal{D} }(X_1, X_2) = \mathcal{D}(X_1, X_2)-\chi_+(X_2) ( X_2 + 
\mathcal{D}_\infty) - \chi_-(X_2) (X_2 - \mathcal{D}_\infty) $$
belongs to $\mathcal{V}^+(\mathcal{B})$ ($\chi_\pm$ defined
by~\eqref{eq:defchipm}). \\

\noindent The following proposition provides necessary
and sufficient conditions for the existence of an exponentially
decaying solution (see also Proposition~2.2 of
Ref.~\cite{Nazarov205} and Section~5 of
Ref.~\cite{Claeys.Delourme:2013} for the proof):
\begin{proposition}
  \label{prop:layer_existence_uniqueness_problem_strip}
  Assume that $F \in \left(\mathcal{V}^-(\mathcal{B})\right)' $ and $G \in L^2(\partial \widehat{\Omega}_{\hole})$. Problem~\eqref{CanoniqueBoundaryLayer} has a unique solution $\Pi \in \mathcal{V}^+(\mathcal{B})$ if and only if ${(F,G)}$   satisfies the following two conditions \begin{align}
      \label{eq:prop_layer_existence_uniqueness_compatibility_D}
      \tag{$\mathcal{C}_{\mathcal{D}}$}
      \int_{\mathcal{B}} F(\mX) \mathcal{D}(\mX) d\mX
      {+ \int_{\partial \widehat{\Omega}_{\hole}} G(\mX) \mathcal{D}(\mX) d\sigma(\mX)}& = 0, \\
      \label{eq:prop_layer_existence_uniqueness_compatibility_N}
      \tag{$\mathcal{C}_{\mathcal{N}}$}
      \int_{\mathcal{B}} F(\mX) \mathcal{N}(\mX) d\mX
      {+ \int_{\partial \widehat{\Omega}_{\hole}} G(\mX) \mathcal{N}(\mX) d\sigma(\mX)} & = 0.
    \end{align}
  
 % \item Conversely, if problems (\ref{CanoniqueBoundaryLayer}) admits
%    a solution $\Pi \in \mathcal{V}^+(\mathcal{B})$, then it satisfies
%    the compatibility conditions
%    \eqref{eq:prop_layer_existence_uniqueness_compatibility_D},
%    \eqref{eq:prop_layer_existence_uniqueness_compatibility_N}.
\end{proposition}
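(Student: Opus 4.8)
The plan is to prove Proposition~\ref{prop:layer_existence_uniqueness_problem_strip} by a Fredholm-type argument in the periodic strip, reducing the operator $-\Delta_\mX$ on $\mathcal{V}^+(\mathcal{B})$ to a compact perturbation of an invertible one via the exponential weight $w_e^+$. First I would establish the \emph{necessity} of the two orthogonality conditions: assuming $\Pi \in \mathcal{V}^+(\mathcal{B})$ solves~\eqref{CanoniqueBoundaryLayer}, I test the equation against the two kernel elements $\mathcal{N} = \mathds{1}_\mathcal{B}$ and $\mathcal{D}$. Because $\Pi$ decays exponentially while $\mathcal{N}$ is bounded and $\mathcal{D}$ grows only linearly, all boundary terms at $X_2 \to \pm\infty$ arising from Green's second identity vanish; the periodicity in $X_1$ kills the lateral boundary contributions on $\{X_1 = 0\} \cup \{X_1 = 1\}$; and the only surviving terms are the volume integral of $F$ against the kernel element plus the integral of $G$ against its trace on $\partial\widehat\Omega_\hole$ (together with $\int \Pi \,\Delta(\text{kernel}) = 0$ since kernel elements are harmonic). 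This yields exactly~\eqref{eq:prop_layer_existence_uniqueness_compatibility_D}--\eqref{eq:prop_layer_existence_uniqueness_compatibility_N}.

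For \emph{sufficiency}, I would set up the weak formulation in $\mathcal{V}^+(\mathcal{B})$. Writing $\Pi = \tilde\Pi / w_e^+$ with $\tilde\Pi \in \Hone(\mathcal{B})$ (periodic in $X_1$), the Laplacian transforms into $-\Delta$ plus lower-order terms with the weight derivatives of $w_e^+(X_1,X_2) = \chi(X_2)\exp(|X_2|/2)$ as coefficients; these coefficients are bounded and, crucially, the zeroth-order term contributes a coercive $-\tfrac14\|\tilde\Pi\|^2$-type piece far from $X_2 = 0$, so that the associated sesquilinear form is of the shape (coercive) $+$ (compact), the compactness coming from the fact that the perturbing terms are supported in the bounded region $|X_2| \leq 2$ where the Rellich embedding $\Hone \hookrightarrow \Ltwo$ applies. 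Hence the operator is Fredholm of index $0$, its kernel on $\mathcal{V}^+(\mathcal{B})$ is trivial (the only candidate kernel functions $\mathcal{N},\mathcal{D}$ do not lie in $\mathcal{V}^+$, since $\mathcal{N}$ does not decay and $\mathcal{D}$ grows linearly), and therefore $-\Delta_\mX : \mathcal{V}^+(\mathcal{B}) \to (\mathcal{V}^-(\mathcal{B}))'$ (after accounting for the Neumann datum) is an isomorphism onto the annihilator of the cokernel. Identifying that two-dimensional cokernel with $\mathrm{span}\{\mathcal{N},\mathcal{D}\}$ via the duality pairing, the solvability condition is precisely that the right-hand side functional $v \mapsto \int_\mathcal{B} F v + \int_{\partial\widehat\Omega_\hole} G v$ annihilates $\mathcal{N}$ and $\mathcal{D}$, which is~($\mathcal{C}_\mathcal{D}$)--($\mathcal{C}_\mathcal{N}$); uniqueness in $\mathcal{V}^+(\mathcal{B})$ follows from triviality of the kernel.

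An alternative, perhaps cleaner, route is the Fourier–Floquet / partial-Fourier approach: expand in $X_1$-Fourier modes $e^{2\pi\imath m X_1}$, $m \in \IZ$; for $m \neq 0$ the one-dimensional ODE $-\partial_{X_2}^2 \widehat\Pi_m + 4\pi^2 m^2 \widehat\Pi_m = \widehat F_m$ has a unique exponentially decaying solution with no constraint, while the obstruction lives entirely in the mode $m=0$ coupled with the hole $\widehat\Omega_\hole$; there the relevant operator is $-\Delta$ on the strip with a hole and the two-dimensional kernel $\{\mathcal{N},\mathcal{D}\}$ governs solvability exactly as in the classical Saint-Venant principle for elasticity strips. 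Since the statement says the proof is available in Ref.~\cite{Nazarov205} and Ref.~\cite{Claeys.Delourme:2013}, I would most likely just cite those and present only the short necessity computation in detail. The main obstacle in a self-contained proof is the sufficiency direction: correctly identifying the two-dimensional cokernel of the weighted operator with $\mathrm{span}\{\mathcal{N},\mathcal{D}\}$ — this requires carefully tracking the boundary terms at infinity for the \emph{non-decaying} test functions $\mathcal{N}$ and the \emph{linearly growing} $\mathcal{D}$, and checking that $\mathcal{D}$ is the unique such ``linear-growth'' harmonic function (its existence and the constant $\mathcal{D}_\infty$ having been asserted just above the proposition), so that the splitting $\mathcal{V}^- = \mathcal{V}^+ \oplus \mathrm{span}\{\mathcal{N},\mathcal{D}\}$ is genuinely a direct sum realizing the Fredholm structure.
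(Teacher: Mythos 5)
The paper does not actually prove Proposition~\ref{prop:layer_existence_uniqueness_problem_strip}; it refers the reader to Proposition~2.2 of Ref.~\cite{Nazarov205} and Section~5 of Ref.~\cite{Claeys.Delourme:2013}, which is precisely the option you flag at the end of your proposal. So there is no in-paper proof to compare against, and your citation instinct matches the paper exactly. Your expanded sketch is a reasonable reconstruction of what those references do, and the necessity direction (Green's second identity tested against $\mathcal{N}$ and $\mathcal{D}$, with periodicity killing the lateral traces and the exponential decay of $\Pi$ against the at-most-linear growth of the test functions killing the contributions at $X_2\to\pm\infty$) is correct and essentially complete.

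The sufficiency sketch has the right architecture (Fredholm of index $0$ in a weighted/conjugated setting, trivial kernel in $\mathcal{V}^+$, two-dimensional cokernel spanned by $\mathcal{N}$ and $\mathcal{D}$), but two points would need repair before it would stand as a proof. First, the substitution $\Pi=\tilde\Pi/w_e^+$ is not legitimate as written: by \eqref{eq:prop_layer_existence_uniqueness_space} the weight $w_e^+ = \chi(X_2)\exp(|X_2|/2)$ vanishes on $|X_2|<1$, so $w_e^+$ is not a bijective multiplier and the map $\Pi\mapsto\Pi w_e^+$ destroys information near the layer; one needs either a strictly positive weight with the same behaviour at infinity or (as in the references) a non-uniform weighted Sobolev framework that treats the compact region $|X_2|\le 2$ separately. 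Second, the claim that the conjugated form is ``(coercive) $+$ (compact) with the compact part supported in $|X_2|\le 2$'' is not quite accurate: the conjugation produces a first-order drift term $b\cdot\nabla$ with $b$ bounded but \emph{not} compactly supported (it equals $\pm\tfrac12\be_2$ for $|X_2|>2$), so the Fredholm property should be argued via a G\aa rding-type inequality and a Peetre-type lemma rather than direct coercive-plus-compact splitting; the sign on the resulting zeroth-order contribution should also be $+\tfrac14$, not $-\tfrac14$. Your alternative Fourier--Floquet route is cleaner for the modes $m\ne 0$, but the $m=0$ mode still couples to the hole, so the full Fredholm reduction does not disappear. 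None of these change the conclusion, but they are exactly the delicate points that the cited references handle and that your sketch currently glosses over.
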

\subsection{Derivation of the boundary layer correctors problems and the transmission conditions for the macroscopic problems}
\label{SubSectionDerivationu0u1}
The previous framework will allows us to derive \emph{formally} the transmission conditions~\eqref{eq:macroscopic_u_n_0}-\eqref{eq:macroscopic_u_n_1}
for the macroscopic fields $u_{n,q}^\delta$ across $\Gamma$. This procedure
turns out to be independent of the index $n$ and of the superscript $\delta$ (of
$u_{n,q}^\delta$) so that we shall omit the index $n$ and the
superscript $\delta$ in this section. To do so, we completely ignore the corners $\mathbf{x}_{O}^\pm$ and we proceed as if the periodic layer were infinite.  For a given  $a \in (0, \LBottom)$, we restrict the domain $\Omega^\delta$ to
$  \Omega^\delta_a = \lbrace \mx \in \Omega^\delta \text{ such that }
  \lvert x_1 \rvert < a \rbrace$,
and we call $\Omega_a$ the limit domain
as $\delta \to 0$, \ie$
  \label{eq:domain_Omega_a}
  \Omega_a = \lbrace \mx \in \OmegaTop \cup \OmegaBottom \text{ such that }
  \lvert x_1 \rvert < a \rbrace$.
%\begin{figure}[h!]
%  \centering
%  \input{domain_equiv.pspdftex}
%  \caption{Local domain $\Omega^\delta_a$ considered for writing the
%    jump conditions along the layer.}
%  \label{DessinLocalLayer}
%\end{figure}
We start from a (given) term $u_0$ in $\Omega_a$ that is
solution of the homogeneous Helmholtz equation
$$
- \Delta u_0 - k_0^2 u_0 = 0 \quad  \mbox{in} \; \OmegaTop \cap \Omega_a \; \mbox{and} \; \OmegaBottom \cap \Omega_a.
$$ 
Then,  using
the method of homogenization\cite{PavliotisStuart2008}, we extend
$u_0$ to a function $v^\delta$ of the form 
$$\chi(x_2 / \delta) (u_0 + \delta u_1 + \delta^2 u_2) + (1-\chi(x_2 / \delta)) (\Pi_0 + \delta \Pi_1 + \delta^2 \Pi_2))$$
 that is defined in $\Omega^\delta_a$
and that satisfies the original Helmholtz problem 
\eqref{eq:perturbed_Helmholtz}  up to a given order (ignoring the lateral boundaries $\Omega_a^\delta$) :
 $$ - \Delta v^\delta - (k^\delta)^2 v^\delta \approx 0 \quad \mbox{in} \; \Omega_a^\delta  \quad \mbox{and}  \quad \partial_\mathbf{n} v^\delta  \approx 0
 \;\;\mbox{on} \; \Gamma^\delta \cap \partial \Omega_a^\delta.$$
 \begin{remark} The periodic boundary layer being considered as infinite, we point
 out that the following analysis is entirely
 classical\cite{RapportSanchezPalencia,ArtolaCessenat,AchdouCR,Ammari}. Moreover, we emphasize that the upcoming iterative
procedure is formal in the sense that we shall provide necessary
transmission conditions for the macroscopic terms $u_q$ (without
questioning their existence yet).

 \end{remark}

\subsubsection{Step 0: $[u_0]_\Gamma$  and $\Pi_0$}\label{SubsubStep0}

We start with the ansatz
\begin{equation}
  \label{eq:deriv_ansatz_0_ff}
  v^\delta(\mx) = u_0(\mx) \chi(x_2 / \delta), \quad \text{in } \Omega^\delta_a.
\end{equation}
The choice of the cut-off function $\chi(x_2 / \delta)$ is intended 
 %$  \lbrace \mx \in \Omega^\delta_a \text{ such that }\chi(x_2 / \delta)
%  \not= 0\rbrace = \lbrace \mx \in \Omega_a \text{ such that }\chi(x_2 / \delta)
%  \not= 0\rbrace$
%and,
since $k^\delta(\mx) = k_0$ on the support of $\chi( x_2 /
\delta)$. Reminding that $\big( - \Delta u_0 - k_0^2 u_0 \big)
\chi(x_2 / \delta)=0$, we see that 
\begin{equation}
  \label{eq:deriv_ansatz_0_equation}
     - \Delta v^\delta -\widehat{k}^2(\frac{\mx}{ \delta})  v^\delta  =-
    \frac{1}{\delta^2} u_0(\mx) \chi''(\frac{x_2}{\delta}) \; -\;
    \frac{2}{\delta} \xdrv{u_0}{x_2}(\mx) \chi'(\frac{x_2}{ \delta}) \quad \mbox{and} \quad \partial_\mathbf{n} v^\delta = 0\;\mbox{on} \; \Gamma^\delta \cap \partial \Omega_a^\delta.
\end{equation}
In \eqref{eq:deriv_ansatz_0_equation}, the leading
order term  is in $\delta^{-2}$ and is supported
in a vicinity of the limit interface $\Gamma_a = (-a,a) \times \lbrace 0
\rbrace$. To correct it, it is rational to add to $v^\delta$ an exponentially decaying  periodic corrector
$\Pi_0(x_1,\mx/\delta)$:
\begin{equation}
  \label{eq:deriv_ansatz_0_ff_with_bc}
  v^\delta(\mx) = u_0(\mx) \chi(x_2 / \delta) + \Pi_0(x_1,\mx/\delta) , \quad \text{in } \Omega^\delta_a
\end{equation}
We note that \begin{multline}
  \label{eq:deriv_ansatz_0_equation_with_bc}
    - \Delta v^\delta - \widehat{k}^2(\mx / \delta)  v^\delta  =
    \frac{1}{\delta^2} \left( - u_0(\mx) \chi''(\frac{x_2}{
      \delta}) - \Delta_{\mX} \Pi_0(x_1, \frac{\mx}{\delta}) \right) \\
     + \frac{1}{\delta} \left(- 2 \xdrv{u_0}{x_2}(\mx) \chi'(\frac{x_2}{ \delta})
    - 2 \partial_{x_1} \partial_{X_1} \Pi_0(x_1, \frac{\mx}{ \delta}) \right)
     - \partial_{x_1}^2 \Pi_0(x_1, \frac{\mx}{\delta}) - \widehat{k}^2(\mx / \delta)  \Pi_0(x_1, \frac{\mx}{ \delta}).
\end{multline}
Then, making the change of scale
 $\mX = \mx / \delta$ and using a Taylor expansion of
$u_0(x_1, \delta X_2)$ for $\delta$ small and for $X_2 \not= 0$, the leading term of order $\delta^{-2}$ vanishes if  $\Pi_0$ satisfies
\begin{equation}\label{ProblemPi0}
\left\lbrace
\begin{array}{rcll}
- \Delta_{\mX} \Pi_{0}(x_1, \mX) &= & F_0(x_1, \mX)  &\; \mbox{in} \; \mathcal{B}, \\
\partial_\bn \Pi_{0} & = & 0  &\; \mbox{on} \;  \partial \widehat{\Omega}_\hole,
\end{array}
\right. F_0(x_1, \mX) = \sum_{\pm}u_0(x_1,0^\pm) \chi_\pm''(X_2).
\end{equation}
Problem~\eqref{ProblemPi0} is a partial differential equation with
respect to the microscopic variables $X_1$ and $X_2$, wherein the
macroscopic variable $x_1$ plays the role of a parameter. For a fixed
$x_1$ in $(-a, a)$ (considered as a parameter),
$F_0(x_1, \cdot)$ belongs to $(\mathcal{V}^-(\mathcal{B}))'$ since it
is compactly supported.  Then, in view of
Proposition~\ref{prop:layer_existence_uniqueness_problem_strip}, there
exists an exponentially decaying solution
$\Pi_{0}(x_1, \cdot) \in \mathcal{V}^+(\mathcal{B})$ if and only if
the two compatibility conditions
(\ref{eq:prop_layer_existence_uniqueness_compatibility_D},
\ref{eq:prop_layer_existence_uniqueness_compatibility_N})
(Prop.~\ref{prop:layer_existence_uniqueness_problem_strip}) are
satisfied.  The condition
(\ref{eq:prop_layer_existence_uniqueness_compatibility_N}) is always
satisfied while the condition
(\ref{eq:prop_layer_existence_uniqueness_compatibility_D}) gives
$
  \left[ u_0 \right]_{\Gamma_a}(x_1) =0$.
Taking formally in this relation the limit $a = \LBottom$ gives
\begin{equation}\label{Sautu0}
  \left[ u_0 \right]_{\Gamma}(x_1) =0.
\end{equation}
The previous equality provides a first transmission condition for the limit
macroscopic term $u_0$ (a transmission condition for $\left[ \partial_{\mathbf{n}}u_0 \right]_{\Gamma}$ is still needed). In addition, under the previous condition,
$F_0(x_1, \mX) = \chi''(X_2) \langle u_0 \rangle_{\Gamma}(x_1)$, and,
using the linearity of Problem~\eqref{ProblemPi0}, we can obtain a
tensorial representation of
$\Pi_0$,
in which macroscopic and microscopic variables are separated:
\begin{equation}\label{decompositionPi0}
\Pi_0(x_1, \mX) =   \langle u_0 \rangle_{\Gamma}(x_1) \,  V_{0}(\mX).
\end{equation}
Here the profile function $V_{0}(\mX)$ is the unique function of
$\mathcal{V}^+(\mathcal{B})$ satisfying  
 \begin{equation}\label{ProblemW0t}
\left \lbrace
\begin{array}{rcll}
- \Delta_{\mX} V_{0}(\mX) &= & F_{V_0}(\mX)  &\; \mbox{in} \; \mathcal{B}, \\
\partial_\bn V_{0} &= & 0  &\; \mbox{on} \;  \partial
\widehat{\Omega}_\hole,\\
\partial_{X_1} V_{0} (0,X_2) &= &\partial_{X_1} V_{0}(1,X_2),& \; X_2 \in
\R,
\end{array} 
\right. \quad F_{V_0}(\mX) = \chi''(X_2).\\
\end{equation}
A direct calculation shows that $
V_{0}(\mX) = 1  - \chi(X_2)$.
%Finally, inserting the expression of $\Pi_0$ into
%\eqref{eq:deriv_ansatz_0_equation_with_bc}, we get
%\begin{equation}
%  \label{eq:deriv_ansatz_0_equation_with_bc2}
%  \begin{aligned}
%    - \Delta v^\delta - \big( \widehat{k}(\mx / \delta) \big)^2 v^\delta & =
%    \frac{1}{\delta^2} \left( u_0(x_1,0) - u_0(\mx) \right) \chi''(x_2
%    /
%    \delta) \\
%   & - \frac{2}{\delta} \partial_{x_2} u_0(\mx) \chi'(x_2 / \delta)
 %   - (\partial_{x_1}^2 +k_0^2) u_0(x_1,0) (1  - \chi(x_2 / \delta)) .
 % \end{aligned}
%\end{equation}
%The first line of the right-hand side in
%\eqref{eq:deriv_ansatz_0_equation_with_bc2} can be written, using a
%Taylor expansion of $u_0$ for $x_2$ small and not equal to~0:
%\begin{equation}
%  \label{eq:deriv_ansatz_0_equation_with_bc2_l1}
%  u_0(x_1,0) - u_0(\mx) = - \partial_{x_2} u_0(x_1,0^\pm) x_2
%  - \partial_{x_2}^2 u_0(x_1,0^\pm) \frac{x_2^2}{2} + o(x_2^2)
%\end{equation}
%Using the variable change $x_2 = \delta X_2$, we can rewrite relation
%\eqref{eq:deriv_ansatz_0_equation_with_bc2_l1} under the form
%\begin{equation}
%  \label{eq:deriv_ansatz_0_equation_with_bc2_l1_2}
%  u_0(x_1,0) - u_0(\mx) = - \delta \partial_{x_2} u_0(x_1,0^\pm) X_2
 % - \delta^2 \partial_{x_2}^2 u_0(x_1,0^\pm) \frac{X_2^2}{2} + o(\delta^2)
%\end{equation}
%so that the next term we have to add in $v^\delta$ is of order $\delta$.

\subsubsection{Step 1: $[\partial_{x_2}
  u_0]_\Gamma$, $[u_1]_\Gamma$, and $\Pi_1$}\label{SubsubStep1}

By definition of $\Pi_0$, the leading part in the right hand side of~\eqref{eq:deriv_ansatz_0_equation_with_bc} is of order $\delta^{-1}$.
 To cancel these terms,  we correct $v^\delta$ defined by \eqref{eq:deriv_ansatz_0_ff_with_bc},
adding a first order corrector, \emph{both} in a vicinity of the layer
and away from the layer:
\begin{equation}
  \label{eq:deriv_ansatz_1_ff_with_bc}
    v^\delta(\mx)  = u_0(\mx) \chi(x_2 / \delta) +
    \Pi_0(x_1,\mx/\delta) 
     + \delta u_1(\mx) \chi(x_2 / \delta) + \delta \Pi_1(x_1,\mx/\delta),
    \quad \text{in } \Omega^\delta_a.
\end{equation}
Adding the term $\Pi_1$ is natural (indeed, the remaining term in
\eqref{eq:deriv_ansatz_0_equation_with_bc} is located in the vicinity of the interface $\Gamma$. It is of order $1/\delta$,
that can be seen as $\delta$ (order of the remaining term) times
$\delta^{-2}$ (order of differentiation after the change of scale)). By contrast, the addition of the term $u_1$ might be surprising but
appears to be mandatory to ensure the exponential decay of $\Pi_1$. Then,
\begin{equation}
  \label{eq:deriv_ansatz_1_equation_with_bc}
 \begin{aligned}
    - \Delta v^\delta -  \widehat{k}^2(\frac{\mx}{\delta})  v^\delta  =   &  -
    \delta (\Delta u_1 + k_0^2 u_1) \chi\left(\frac{x_2 }{\delta}\right)   +
    \frac{1}{\delta^2} \left( u_0(x_1,0) - u_0(\mx) \right) \chi''\left(\frac{x_2 }{\delta}\right) \\
   &  - \frac{2}{\delta} \partial_{x_2} u_0(\mx) \chi'\left(\frac{x_2 }{\delta}\right) -
    \frac{1}{\delta} u_1(\mx) \chi''\left(\frac{x_2 }{\delta}\right) - \frac{1}{\delta}
    \Delta_{\mX} \Pi_1(x_1, \frac{\mx}{\delta}) 
    \\ &  + \left(\partial_{x_1}^2+\widehat{k}^2(\frac{\mx}{\delta})\right)
    u_0(x_1,0) (1 - \chi\left(\frac{x_2 }{\delta}\right)) - 2 \partial_{x_2} u_1(\mx)
    \chi'\left(\frac{x_2 }{\delta}\right) \\&   - 2 \partial_{x_1} \partial_{X_1}
    \Pi_1(x_1, \frac{\mx}{ \delta}) 
     - \delta \left(\partial_{x_1}^2+ \widehat{k}^2(\frac{\mx}{\delta})\right) \Pi_1(x_1, \frac{\mx}{ \delta}).
  \end{aligned}
\end{equation}
and
\begin{equation}\label{NeumannOrdre1}
 \partial_\mathbf{n} v^\delta(\delta \mathbf{X}) =  \partial_\mathbf{n} \Pi_1(x_1, \mathbf{X}) + \partial_{x_1} \langle u_0(x_1, 0) \rangle \mathbf{e}_1\cdot \mathbf{n}.
\end{equation}
For a given $\mx$ such that $x_2 \not = 0$, dividing
  \eqref{eq:deriv_ansatz_1_equation_with_bc} by $\delta$ and taking
  the limit as $\delta \to 0$ in~\eqref{eq:deriv_ansatz_1_equation_with_bc} leads to
  \begin{equation}
  \label{eq:deriv_ansatz_1_helm}
  - \Delta u_1 - k_0^2 u_1 = 0 \quad \mbox{in} \;\OmegaTop \cap \Omega_a \; \mbox{and} \; \OmegaBottom \cap \Omega_a
\end{equation}
Indeed, the terms that contains $1-\chi$, $\chi'$ and $\chi''$ are compactly supported
and vanish for $\lvert x_2 \rvert > 2 \delta$, and, by assumption the terms related
to $\Pi_1$ are exponentially decaying towards
$x_2 / \delta \to \infty$.  To defined $\Pi_1$, we make the change of scale $\mathbf{X} = \frac{\mathbf{x}}{\delta}$ in~\eqref{eq:deriv_ansatz_1_equation_with_bc} (using Taylor expansions of $u_0$ and $u_1$ in the vicinity of $\Gamma$) and we enforce the term in $\delta^{-1}$ in~\eqref{eq:deriv_ansatz_1_equation_with_bc} to vanish. Together with the Neumann boundary condition~\eqref{NeumannOrdre1}, it is rational to construct $\Pi_1$ as a solution to
\begin{equation}\label{ProblemPi1}
\left \lbrace
\begin{array}{rcll}
  \dsp - \Delta_{\mX} \Pi_{1}(x_1, \mX) & = & F_1(x_1, \mX)  & \; \mbox{in} \; \mathcal{B}, \\
  \dsp \partial_\bn \Pi_{1} & = & {-\partial_{x_1} \langle u_0(x_1, 0) \rangle  \, \be_1
                                  \cdot \bn}  &\; \mbox{on} \;  \partial
  \widehat{\Omega}_\hole, \\
  \partial_{X_1} \Pi_1 (0,X_2) &= &\partial_{X_1} \Pi_1(1,X_2),& \; X_2 \in
  \IR,
\end{array} 
\right.
\end{equation}
where
\begin{equation}\label{G1}
  F_1(x_1, \mX) = \sum_{\pm}  \left( \partial_{x_2} u_0(x_1,0^\pm) (2 \chi'_\pm(X_2) + X_2
  \chi''_\pm(X_2))  +  u_1(x_1,0^\pm) \chi''_\pm(X_2) \right) .
\end{equation}
As for $\Pi_0$, Problem~\eqref{ProblemPi1} is a partial differential equation
with respect to the microscopic variables $X_1$ and $X_2$, where the
macroscopic variable $x_1$ plays the role of a parameter. For a fixed $x_1$ in
$(-\LBottom, \LBottom)$, $F_1(x_1, \cdot)$ is compactly supported in
$\mathcal{B}$, and, consequently, belongs to $(\mathcal{V}^-(\mathcal{B}))'$.
Then, thanks to
Proposition~\ref{prop:layer_existence_uniqueness_problem_strip}, there exists
an exponentially decaying solution $\Pi_1(x_1,\cdot) \in
\mathcal{V}^+(\mathcal{B})$ if and only if the two compatibility
conditions~(\ref{eq:prop_layer_existence_uniqueness_compatibility_D},
\ref{eq:prop_layer_existence_uniqueness_compatibility_N}) are satisfied.
A direct calculation shows that the compatibility condition
\eqref{eq:prop_layer_existence_uniqueness_compatibility_N} is fulfilled if and
only if
\begin{equation}\label{Sautdu0}
  \left[\partial_{x_2} u_0 \right]_\Gamma(x_1) = 0,
\end{equation}
and the compatibility condition
\eqref{eq:prop_layer_existence_uniqueness_compatibility_D} is fulfilled if and
only if
\begin{multline}\label{Sautu1}
  \left[ u_1 \right]_\Gamma(x_1) = \mathcal{D}_1 \,
  \partial_{x_1} \langle u_0 \rangle_{\Gamma}(x_1) +   \mathcal{D}_2
  \, \langle \partial_{x_2} u_0 \rangle_{\Gamma}(x_1),  \\  \mathcal{D}_1 = - \int_{\partial
  \widehat{\Omega}_\hole} \mathcal{D} \be_1 \cdot \bn , \; \mathcal{D}_2 = \int_{\mathcal{B}} (2 \chi'(X_2) + X_2 \chi''(X_2)) \mathcal{D}. 
\end{multline}
Under the two conditions~\eqref{Sautdu0}-\eqref{Sautu1},
Problem~\eqref{ProblemPi1} has a unique solution $\Pi_1 \in
  \mathcal{V}^+(\mathcal{B})$ that can be written as 
  \begin{equation}\label{decompositionPi1}
  \Pi_1(x_1, \mX) = \langle u_1 \rangle_\Gamma(x_1)\,
  V_{0}(\mX)  \; {+\; \partial_{x_1} \langle u_0 \rangle_\Gamma(x_1) \, V_{1,1}(\mX)}  \;+ \; \langle \partial_{x_2} u_0
  \rangle_\Gamma(x_1) \, V_{1,2}(\mX) .
\end{equation}
Here, $V_{1,1} \in \mathcal{V}^+(\mathcal{B})$ and $V_{1,2} \in \mathcal{V}^+(\mathcal{B})$ are the unique
exponentially decaying solutions to the following problems:
\begin{equation}\label{ProblemW1t}
  \left \lbrace
  \begin{array}{rcll}
    - \Delta_{\mX} V_{1,1}(\mX) & = & \mathcal{D}_1
                                                   \frac{
                                                   \chi_+''(X_2) -
                                                   \chi_-''(X_2) }{2}
                                                    & \quad \text{in } \mathcal{B}, \\
    \partial_\bn V_{1,1} &= & - \be_1 \cdot \bn & \quad \text{on } \partial
                                                               \widehat{\Omega}_\hole,\\
    \partial_{X_1} V_{1,1} (0,X_2) &= & \partial_{X_1}
                                                     V_{1,1}(1,X_2), &\quad X_2 \in \R, 
    \end{array}
  \right. 
\end{equation}

\begin{equation}\label{ProblemW1n}
  \left \lbrace
  \begin{array}{rcll}
   % \begin{aligned}
      - \Delta_{\mX} V_{1,2}(\mX) & = & F_{V_{1,2}}  + \mathcal{D}_2 \frac{\chi_+''(X_2) -
      \chi_-''(X_2))}{2} & \quad \text{in } \mathcal{B}, \\
      \partial_\bn V_{1,2} &= & 0 & \quad \text{on } \partial
      \widehat{\Omega}_\hole,\\
      \partial_{X_1} V_{1,2} (0,X_2) &= & \partial_{X_1}
      V_{1,2}(1,X_2), &\quad X_2 \in \R, 
    % \end{aligned}
    \end{array}   \right. F_{V_{1,2}}= 2 \chi'(X_2) + X_2 \chi''(X_2)
 \end{equation}

\subsubsection{Step 2: $[\partial_{x_2}
  u_1]_\Gamma$ ($[u_2]_\Gamma$ and $\Pi_2$)}
\label{SubsubStep2}

To define completely  $u_1$, we need to go one order further into the asymptotic expansion. We then correct $v^\delta$ defined by \eqref{eq:deriv_ansatz_1_ff_with_bc},
adding a second order corrector:
\begin{equation}
  \label{eq:deriv_ansatz_2_ff_with_bc}
  \begin{aligned}
    v^\delta(\mx) & = u_0(\mx) \chi(\frac{x_2}{\delta}) +
    \Pi_0(x_1,\frac{\mx}{\delta})  + \delta ( u_1(\mx) \chi(\frac{x_2}{ \delta}) +  \Pi_1(x_1,\frac{\mx}{\delta})) + 
     + \delta^2 ( u_2(\mx) \chi(\frac{x_2 }{\delta}) +  \Pi_2(x_1,\frac{\mx}{\delta})).
     \end{aligned}
\end{equation}
Again, we apply the Helmholtz operator on $v^\delta$. Then extracting
the macroscopic $\delta^2$ order and the $\delta^0$ order close to the
layer gives the equations for $u_2$ and $\Pi_2$. The term $u_2$ is solution of
the homogeneous Helmholtz equation
\begin{equation}
  \label{eq:deriv_ansatz_2_helm}
  - \Delta u_2 - k_0^2 u_2 = 0
\end{equation}
in $\OmegaTop \cap \Omega_a$ and $\OmegaBottom \cap \Omega_a$. The
periodic corrector $\Pi_2$ satisfies the following equation
\begin{equation}\label{ProblemPi2}
\left \lbrace
\begin{array}{rcll}
  \dsp - \Delta_{\mX} \Pi_{2}(x_1, \mX) & = & F_2(x_1, \mX)  &\; \mbox{in} \; \mathcal{B}, \\
  \dsp \partial_\bn \Pi_{2} &=& {-\partial_{x_1} \Pi_1 \, \be_1
                                \cdot \bn}  &\;\mbox{on} \;  \partial
                                              \widehat{\Omega}_\hole, \\
\partial_{X_1} \Pi_2(0,X_2) &= &\partial_{X_1} \Pi_2(1,X_2), &\; X_2 \in \R.
\end{array}
\right. 
\end{equation}
Here,
\begin{multline}
  \label{G2}
    F_2(x_1, \mX)  = \sum_{\pm} u_2(x_1,0^\pm) \chi_\pm''(X_2) + \frac{ \left( \left(\chi_+'(X_2)  -
      \chi_-'(X_2)\right) X_2 \right)' }{2} \left[\partial_{x_2} u_1
    \right]_\Gamma(x_1) \\ +  \; F_{V_{1,2}}\, \langle \partial_{x_2} u_1
    \rangle_{\Gamma}(x_1)
   + F_{V_{2,1}}(\mX) \, \langle u_0 \rangle_\Gamma(x_1) 
     \;  \\ + \; F_{V_{2,2}}(\mX)
    \, \partial_{x_1}^2 \langle u_{0}\rangle_{\Gamma}(x_1) \; + \;
   \; F_{V_{2,3}}(\mX)
    \, \partial_{x_1} \langle \partial_{x_2}
    u_{0}\rangle_{\Gamma}(x_1).
\end{multline}
$F_{V_0}$ and $F_{V_{1,2}}$ are given by~\eqref{ProblemW0t}-\eqref{ProblemW1n}, and,
\begin{multline}
  \nonumber
  F_{V_{2,1}}(\mX) = k_0^2  g(X_2) +
  \Big( \widehat{k}^2 - k_0^2 \Big) \ , \quad
 F_{V_{2,2}}(\mX) =  g(X_2)
  {+ 2 \, \partial_{X_1} \, V_{1,1}(\mX)} \ , \\
  F_{V_{2,3}}(\mX) = 2 \, \partial_{X_1} \, V_{1,2}(\mX) , \quad g(X_2) =  \left( \frac{(X_2)^2}{2} (1-\chi(X_2)) \right)''
\end{multline}
To obtain formula~\eqref{G2}, we have replaced $\Pi_0$ and $\Pi_1$
with their tensorial
representations~(\ref{decompositionPi0}),(\ref{decompositionPi1}), we
have replaced $ -\partial_{x_2}^2 u_0(x_1, 0^\pm)$ by
$\partial_{x_1}^2u_0(x_1, 0^\pm) +
k_0^2 u_0(x_1,0^\pm)$.\\

\noindent For a fixed $x_1 \in (-\LBottom, \LBottom)$, it is easily verified that $F_2(x_1, \cdot)$ belongs to
$(\mathcal{V}^-(\mathcal{B}))' $. %
Then again, the existence of an exponentially
decaying corrector $\Pi_2(x_1, \cdot) \in \mathcal{V}^+(\mathcal{B})$
results from the orthogonality condirtions~\eqref{eq:prop_layer_existence_uniqueness_compatibility_N}-\eqref{eq:prop_layer_existence_uniqueness_compatibility_D}. As previously, enforcing the compatibility condition
\eqref{eq:prop_layer_existence_uniqueness_compatibility_N}  provides
the transmission condition for the jump
 of the normal trace of $u_1$ across $\Gamma$:
\begin{equation}\label{Sautdu1}
[\partial_{x_2} u_1 ]_\Gamma = \mathcal{N}_{1} \, \langle
u_0 \rangle_{\Gamma} +   \mathcal{N}_{2} \, \partial_{x_1}^2 \langle
u_0 \rangle_{\Gamma} + \mathcal{N}_{3} \, \partial_{x_1} \langle \partial_{x_2}
u_0 \rangle_{\Gamma}  , 
\end{equation}
where
\begin{equation}\label{DefN2tN2n}
  \mathcal{N}_{1} = - \int_{\mathcal{B}} 
  F_{V_{2,1}}(\mX),  
  \quad  \mathcal{N}_{2} = - \int_{\mathcal{B}}
  F_{V_{2,2}} + \int_{\partial
    \widehat{\Omega}_\hole} V_{1,1} \mathbf{e}_1 \cdot \mathbf{n}, 
  \quad  \mathcal{N}_{3} = - \int_{\mathcal{B}}
  F_{V_{2,3}} + \int_{\partial
    \widehat{\Omega}_\hole} V_{1,2} \mathbf{e}_1 \cdot \mathbf{n}.
\end{equation}
Then, enforcing the compatibility
condition~\eqref{eq:prop_layer_existence_uniqueness_compatibility_D}
provides the jump $[u_2]_\Gamma$, and the existence of $\Pi_2$ is
proved. Naturally, an explicit expression of $[u_2]_\Gamma$ and a
tensorial representation of $\Pi_2$ can be written, but, for the sake
of concision and the relevance of this article, we do not write it
here.

\begin{remark}
  In the case of a symmetric hole (\ie $(X_1,X_2) \in \mathcal{B}
  \iff (1-X_1,X_2) \in \mathcal{B}$), $V_{1,2}$ is symmetric with respect to the axis $X_1 = \frac{1}{2}$, and, consequently, 
  $
  \mathcal{D}_1 = \mathcal{N}_3 = 0$.
\end{remark}

\section{Analysis of singular behavior of near field terms}
\label{sec:constr-singularities}
The (first order) near field terms satisfy Laplace problems (see~\eqref{NearFieldEquation}) and might grow at infinity. 
This consideration motivates us to introduce two families of so-called \emph{near field singularities} $S_n^\pm$ ($n \in \N$) that satisfy the following homogeneous near field problems
\begin{equation}\label{eq:Sn}
  \left\lbrace\quad
    \begin{aligned}
      -\Delta S_n^\pm & = 0  \quad \mbox{in} \; \widehat{\Omega}^\pm,\\
      \partial_\bn U & = 0 \quad \mbox{on} \;  \partial \widehat{\Omega}^\pm
    \end{aligned}
  \right. 
\end{equation}
and behaves like $(R^\pm)^{\lambda_n}$ for large $R^\pm$. 
%% Non-classical
%\subsection{The Weighted Sobolev spaces $\mathfrak{V}_{\beta, \gamma
%}^\ell(\widehat{\Omega}^\pm)$}
%For the statement of the next results, following the works of Nazarov \cite{Nazarov205}, we need to consider 
% the weighted Sobolev space $\mathfrak{V}_{\beta, \gamma
%}^\ell(\widehat{\Omega}^\pm)$, $\ell \in \{ 0,1,2\}$ defined as the completion of
%$C_c^\infty(\overline{\widehat{\Omega}^\pm})$ with respect to the norm
%\begin{equation}
%  \label{eq:definition_weighted_Sobolev_spaces}
%  \xnorm{v}{\mathfrak{V}_{\beta, \gamma }^\ell(\widehat{\Omega}^\pm)} =
%  \sum_{p=0}^{\ell} \xnorm{(1+R^\pm)^{\beta-\gamma-\delta_{p,0}}
%    (\rho^\pm)^{\gamma-\ell+p+\delta_{p,0}} \nabla^p
%    v}{\Ltwo(\widehat{\Omega}^\pm)},
%\end{equation}
%with $
%  \rho^+ = 1 + R^+ \, \lvert \theta^+
%  - \pi \rvert$ and $\rho^- = 1 + R^- \, \lvert \theta^- \rvert$.
%The norm
%$\xnorm{\cdot}{\mathfrak{V}_{\beta, \gamma
%  }^\ell(\widehat{\Omega}^\pm)}$
%is a non-uniform weighted norm, the weight varying with the angle
%$\theta^\pm$. Away from the layer,  we recover the classical
%Sobolev norm $
%  \xnorm{v}{V_{\beta}^\ell(\mathcal{K}^+)} =
%  \sum_{p=0}^{\ell} \xnorm{(1+R^+)^{\beta-\ell+p}
%    \nabla^p v}{\Ltwo(\mathcal{K}^+)}$,
%while close to the layer, the global weight in
%\eqref{eq:definition_weighted_Sobolev_spaces} is given by
%$(1+R^\pm)^{\beta-\gamma-\delta_{p,0}}$.
%

\subsection{Singular asymptotic blocks}
\label{sec:sing-block}

In absence of the periodic layer, \ie \, $\widehat{\Omega^\pm} =
\widehat{\mathcal K^\pm}$, the function $\ln
R^\pm$ and, for $n \in \Z\setminus\{ 0 \}$,  the functions $(R^+)^{\lambda_n} \, \cos (\lambda_n \theta^+)$  (resp.  $(R^-)^{\lambda_n}  \cos \lambda_n (\theta^--\pi)$)  are  particular solutions of the homogeneous Laplace equation with Neumann
boundary conditions on $\partial \widehat{\mathcal K^\pm}$. However,
these functions do not satisfy the homogenous problem~\eqref{eq:Sn}
since they do not fulfill the homogeneous Neumann boundary conditions
on the obstacles of the periodic layer. Nevertheless, as done in
  Section 3 of~\cite{Nazarov205}, for any $n \in
\N$, starting from the function
\begin{multline}
  \label{eq:defintion_w_0_star}
  w_{0,0,\pm}(\ln R^\pm, \theta^\pm) = \ln R^\pm,   \quad
  w_{n,0,+}(\theta^+) = \cos (\lambda_n \theta^+), \\ \quad
  w_{n,0,-}( \theta^-) = \cos \left( \lambda_n (\theta^--\pi)\right),
\end{multline}
it is possible to build iteratively a so-called asymptotic block $\mathcal{U}_{n,p,+}$ (for any $p \in
\IN$) of the form 
\begin{multline}
  \label{eq:ansatz_U_star_p}
  \mathcal{U}_{n,p,\pm} = \chi(R^\pm) \sum_{q=0}^p \big(
  \chi_{\text{macro},\pm}(X_1^\pm,X_2^\pm) (R^\pm)^{\lambda_n-q} w_{n,q,\pm}(\ln
  R^\pm, \theta^+) \\[-0.5em] %
  + \chi_\mp(X_1^\pm) \lvert X_1^+
  \rvert^{\lambda_n-q} p_{n,q,\pm}(\ln \lvert X_1^\pm \rvert, X_1^\pm,
  X_2^\pm) \big),
\end{multline}
that 'almost' satisfies problem~\eqref{eq:Sn} for large $R^\pm$.
In~\eqref{eq:ansatz_U_star_p}, the cut-off function $\chi_-$ has been defined in
\eqref{eq:defchipm} and is represented on the right part of
Figure~\ref{fig:chi_macro_plus}. The cut-off function
$\chi_{\text{macro},+}$, represented on the left part of
Figure~\ref{fig:chi_macro_plus}, is a smooth function that satisfies
\begin{equation}
  \label{eq:chi_macro_property}
  \chi_{\text{macro},+}(X_1^+,X_2^+) = \chi(X_2^+), \quad X_1^+ < -1.
\end{equation}
and the function $\chi_{\text{macro},-}(X_1^-,X_2^-) =
\chi_{\text{macro},+}(-X_1^-,X_2^-)$. 
\begin{figure}[!hbtp]
  \centering
  \input{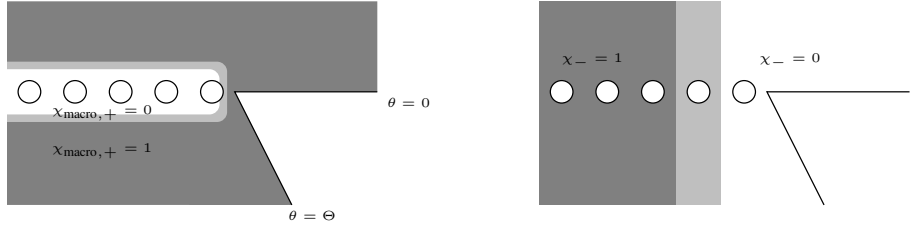}
  \caption{Graphic representation of the cut-off functions
    $\chi_{\text{macro}, +}$ (left part) and $\chi_-$ (right part).}
  \label{fig:chi_macro_plus}
\end{figure}

The definition of the functions $w_{n,q,\pm}$ and $p_{n,q,\pm}$ is given in
Appendix~\ref{sec:defin-prof-funct-wstar}. The functions $w_{n,q,\pm}$ are polynomials in
$\ln R^\pm$. The functions
$p_{n,q,\pm}$   are polynomials in $\ln \lvert X_1^\pm \rvert$, 
periodic with respect to $X_1^\pm$ and exponentially decaying as $X_2^\pm$
tends to $\pm \infty$.   The construction of these functions is done in such a way that their {Laplacian} and their Neumann trace become more and
more decaying at infinity as $p \to +\infty$: more precisely, we can prove that , for any $\varepsilon>0$,
\begin{equation} 
  \Delta  \mathcal{U}_{n,p,\pm}  = o\left( (R^\pm)^{{\lambda_n-p-1+\varepsilon}}\right)
  \quad \mbox{and} \quad  \partial_n  \mathcal{U}_{n,p,\pm}  = o\left( (R^\pm)^{{\lambda_n-p-1+\varepsilon}}\right) \text{on} \; \partial \widehat{\Omega}^\pm.
\end{equation}

We point out that the usage of the cut-off functions $\chi_{\text{macro},\pm}$,
$\chi_\mp(X_1^\pm)$ and $\chi(R^\pm)$ in \eqref{eq:ansatz_U_star_p}
is only a technical way to
construct functions defined on the whole domain $\widehat{\Omega}^\pm$.  \\

 \noindent The asymptotic blocks $\mathcal{U}_{n,p,\pm}$ turn out to be useful to construct the near field singularities $S_n^\pm$ and to describe their asymptotic for large $R^\pm$. 

\subsection{The families  $S_n^\pm$}

We are now in a position to write the main result of this subsection,
which proves the existence of the two families $S_n^\pm$ and give their behaviour at infinity. 
\begin{proposition}
  \label{prop:existence_uniqueness_NF_Sn}
  Let $n \in \N^\ast$, $p(n)=\max(1,1 + \lceil \lambda_n \rceil)$,  and
  \begin{equation}
    \label{eq:C_n}
    C_n^\pm := \begin{cases} 
    \displaystyle{ - \frac{1}{\Theta} \left( \int_{\widehat{\Omega}^\pm} \Delta \mathcal{U}_{n,p(n),\pm} -
      \int_{\partial \widehat{\Omega}^\pm} \partial_\bn
      \mathcal{U}_{n,p(n),\pm} \right) }& \mbox{if} \; \lambda_n \in \N,\\
      0 & \mbox{otherwise.}
      \end{cases}
  \end{equation}

     There exists a unique function $S_n^\pm \in \Hone_{\text{loc}}(\widehat{\Omega}^\pm)$ satisfying the homogeneous problem
  \eqref{eq:Sn} such that the function
  \begin{equation*}
    \tilde{S}_n^\pm = S_n^\pm - \mathcal{U}_{n,1+\lceil \lambda_n
      \rceil,\pm} - C_n^\pm \, \mathcal{U}_{0,1,\pm},  \end{equation*}
  tends to $0$
    as $R^\pm$ goes to infinity. Moreover, $S_n^\pm$ admits the following block
  decomposition for large $R^\pm$: for any $k \in \N^\ast$,
  \begin{equation}
    \label{eq:block_decomposition_Sn}
    \begin{aligned}
      S_n^\pm & = \mathcal{U}_{n,1+\lceil \lambda_{n+k} \rceil,\pm} +
      \sum_{m=1}^k \mathscr{L}_{-m}(S_n^\pm) \,
      \mathcal{U}_{-m,1+\lceil \lambda_{k-m} \rceil,\pm} + o\left(
        (R^\pm)^{-\lambda_k}\right) \\ & \quad \text{if
      }\lambda_n \not\in \IN,\\
      S_n^\pm & = \mathcal{U}_{n,1+\lceil \lambda_{n+k} \rceil,\pm} +
      \sum_{m=1}^k \mathscr{L}_{-m}(S_n^\pm) \,
      \mathcal{U}_{-m,1+\lceil \lambda_{k-m} \rceil,\pm} + o\left(
        (R^\pm)^{-\lambda_k}\right) \\ & + C_n^\pm \,
      \mathcal{U}_{0,1+\lceil \lambda_k \rceil,\pm} \quad \text{if
      }\lambda_n \in \IN.
    \end{aligned}
  \end{equation}
\end{proposition}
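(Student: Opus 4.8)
The plan is to construct $S_n^\pm$ by solving an auxiliary problem for the remainder $\tilde S_n^\pm$ in a suitable weighted space, then peel off the behaviour at infinity iteratively using the asymptotic blocks. First I would fix $p = p(n) = \max(1, 1+\lceil\lambda_n\rceil)$ and consider the ansatz $S_n^\pm = \mathcal{U}_{n,p,\pm} + C_n^\pm\,\mathcal{U}_{0,1,\pm} + \tilde S_n^\pm$. Plugging into~\eqref{eq:Sn}, the function $\tilde S_n^\pm$ must solve $-\Delta \tilde S_n^\pm = \Delta\mathcal{U}_{n,p,\pm} + C_n^\pm\,\Delta\mathcal{U}_{0,1,\pm}$ in $\widehat\Omega^\pm$ with Neumann datum $-\partial_\bn\mathcal{U}_{n,p,\pm} - C_n^\pm\,\partial_\bn\mathcal{U}_{0,1,\pm}$ on $\partial\widehat\Omega^\pm$. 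By the decay estimates on the blocks quoted in Section~\ref{sec:sing-block}, the right-hand side is $o((R^\pm)^{\lambda_n-p-1+\varepsilon})$, hence — since $\lambda_n - p - 1 < -1$ by the choice of $p$ — it lies in a weighted $L^2$ space with decay strictly faster than $(R^\pm)^{-1}$. The constant $C_n^\pm$ is chosen precisely so that the total source (volume plus boundary) has zero mean: when $\lambda_n\in\IN$ the block $\mathcal{U}_{n,p,\pm}$ carries a logarithmic term whose Laplacian does not automatically integrate to zero, and $\mathcal{U}_{0,1,\pm}$ (built from $\ln R^\pm$) supplies the one-dimensional family needed to cancel that defect; this is exactly the content of formula~\eqref{eq:C_n}, and the Green-type identity $\int_{\widehat\Omega^\pm}\Delta\mathcal{U}_{0,1,\pm} - \int_{\partial\widehat\Omega^\pm}\partial_\bn\mathcal{U}_{0,1,\pm} = \Theta$ (coming from $\ln R^\pm$ on the cone of opening $\Theta$) makes the normalisation well defined.

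Next I would invoke a Kondrat'ev-type / weighted-Sobolev solvability theorem for the Laplace–Neumann problem on the periodically perturbed cone $\widehat\Omega^\pm$ — this is the analogue in the near-field geometry of Proposition~\ref{prop:layer_existence_uniqueness_problem_strip}, and it is precisely the statement established in Section~3 of Ref.~\cite{Nazarov205}. One works in spaces with weight $(R^\pm)^{\beta}$, and the relevant weight exponents for which the operator is Fredholm are governed by the singular exponents $\lambda_m = m\lambda$, $m\in\IZ$. Because the source has been arranged to decay faster than $(R^\pm)^{-1}$ and to satisfy the (single) compatibility condition, there is a unique solution $\tilde S_n^\pm$ in the space of functions tending to $0$ at infinity; uniqueness follows from the fact that no nontrivial harmonic Neumann function on $\widehat\Omega^\pm$ decays at infinity (the only bounded ones are constants, and constants do not tend to $0$ unless they vanish). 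This produces $S_n^\pm\in\Hone_{\text{loc}}(\widehat\Omega^\pm)$ solving~\eqref{eq:Sn} and gives the first asserted property. Replacing $\mathcal{U}_{n,p,\pm}$ by $\mathcal{U}_{n,1+\lceil\lambda_n\rceil,\pm}$ in the statement is harmless since the two blocks differ only by terms already decaying at infinity (the truncation index only affects how many correcting orders one has built, not the leading structure).

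Finally, for the block decomposition~\eqref{eq:block_decomposition_Sn} I would bootstrap: knowing $\tilde S_n^\pm \to 0$, reapply the weighted solvability result with a more negative weight. Each time the weight crosses a value $-\lambda_m$ ($m\geq 1$), a term proportional to the corresponding homogeneous singularity $\mathcal{U}_{-m,\cdot,\pm}$ must be subtracted, with coefficient $\mathscr{L}_{-m}(S_n^\pm)$ defined by the appropriate Green's formula pairing (a "dual singularity" functional); when $\lambda_n\in\IN$ the logarithmic block $\mathcal{U}_{0,\cdot,\pm}$ reappears with coefficient $C_n^\pm$, matching the second line of~\eqref{eq:block_decomposition_Sn}. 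Iterating this $k$ times and tracking the residual gives the $o((R^\pm)^{-\lambda_k})$ remainder. The main obstacle I expect is the careful bookkeeping of the weighted-space indices together with the logarithmic (resonant) cases $\lambda_n\in\IN$: one must verify that the block construction of Section~\ref{sec:sing-block} produces exactly the $\ln$-powers needed so that every compatibility condition encountered in the bootstrap is either automatically satisfied or absorbed by a block already present, and that the coefficient $C_n^\pm$ defined once in~\eqref{eq:C_n} is the same constant that shows up at every subsequent order — this consistency is what makes the decomposition close.
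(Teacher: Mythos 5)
Your proposal is correct and follows essentially the same route as the paper: write $S_n^\pm = \mathcal{U}_{n,p(n),\pm} + C_n^\pm\,\mathcal{U}_{0,1,\pm} + \tilde S_n^\pm$, observe that the residual Laplace--Neumann problem for $\tilde S_n^\pm$ has a decaying source, fix $C_n^\pm$ by the compatibility condition together with the Green identity $\int_{\widehat\Omega^\pm}\Delta\mathcal{U}_{0,1,\pm} - \int_{\partial\widehat\Omega^\pm}\partial_\bn\mathcal{U}_{0,1,\pm} = \Theta$, invoke the weighted solvability result to produce a $\tilde S_n^\pm$ tending to zero, and defer the block decomposition to Nazarov's theory. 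Two small remarks on attribution and mechanism: the paper obtains the existence of $\tilde S_n^\pm$ from the variational result Proposition~\ref{PropositionProblemeChampProche} (Caloz--Vial, critical weight $(1+R)\ln(2+R)$), reserving the full Kondrat'ev/Nazarov weighted-scale machinery only for the block decomposition~\eqref{eq:block_decomposition_Sn}, whereas you invoke the latter for both steps -- this is harmless but heavier than necessary. Also, the vanishing of the defect $\int\Delta\mathcal{U}_{n,p(n),\pm} - \int\partial_\bn\mathcal{U}_{n,p(n),\pm}$ when $\lambda_n\notin\N$ is not a consequence of $\mathcal{U}_{n,p,\pm}$ lacking a logarithm; it is the separate content of Lemma~\ref{lema:compatibility_condition_Unp}, proved by a limiting Green-formula argument on truncated domains. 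Citing that lemma explicitly would make the resonant/non-resonant dichotomy in~\eqref{eq:C_n} airtight rather than heuristic.
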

In the previous proposition $\lceil a \rceil$ denotes the ceiling of a
real number $a$. As demonstrated in
\ref{appendix_singularities}, for $\lambda_n \notin \N$, the
quantity $\int_{\widehat{\Omega}^\pm} \Delta \mathcal{U}_{n,p(n),\pm}
- \int_{\partial \widehat{\Omega}^\pm} \partial_\bn
\mathcal{U}_{n,p(n),\pm}$ vanishes
(Lemma~\ref{lema:compatibility_condition_Unp}), which explains why
$C_n=0$ in this case. The asymptotic
formula~\eqref{eq:block_decomposition_Sn} shows that, for large
$R^\pm$, $S_n^\pm$ can be decomposed as a sum of 'macroscopic'
contributions of the form $(R^\pm)^{\lambda_m -q} s_{m,q}(\theta^\pm,
\ln R^\pm)$ modulated by exponentially decaying (in $X_2^\pm$)
periodic (in $X_1$) functions of the form $|X_1^\pm|^{\lambda_m -q}
p_{m,q}(\ln |X_1^\pm|, X_1^\pm, X_2^\pm)$ in the vicinity if the
periodic layer. 
\begin{proof}
  The existence of the function $\tilde{S}_n^\pm$ results from the
  application of Proposition~\ref{PropositionProblemeChampProche} (or
  Corollary~3.23 of Ref.~\cite{CalozVial}), noting that the
  compatibility condition~\eqref{eq:CompatibilityProblemeChampProche}
  (due to the Neumann boundary condition) is satisfied : for
  $\lambda_n \in \N$, the addition of
  $C_n^\pm \, \mathcal{U}_{0,1,\pm}$ is required in order to fulfill
  this condition (note that, as shown in the proof of
  Lemma~\ref{lema:no_singularity_alone},
  $\int_{\widehat{\Omega}^\pm} \Delta \mathcal{U}_{0,1,\pm} -
  \int_{\partial \widehat{\Omega}^\pm} \partial_\bn
  \mathcal{U}_{0,1,\pm} = \Theta$).
  The asymptotic~\eqref{eq:block_decomposition_Sn} then follows from
  the application of the results of Nazarov\cite{Nazarov205} (see also
  Section~4 of Ref.\cite{Delourme.Semin.Schmidt:2015} for a
  detailed description of this decomposition). A rigorous estimation
  of the remainder $o\left( (R^\pm)^{-k}\right)$ can be done through
  the introduction of non-uniform weigthed Sobolev spaces\cite{Nazarov205}.
\end{proof}
\begin{remark}\label{RemAbsLog}
  We point out that it is not possible to construct a function in
  $\Honeloc(\widehat{\Omega}^\pm)$ satisfying the homogeneous problem
  \eqref{eq:Sn} and behaving like $\ln R^\pm$ at infinity (See
  Lemma~\ref{lema:no_singularity_alone} in
  Appendix~\ref{sec:absence-logar-sing}).
\end{remark}

We complete the family $(S_n^\pm)_{n>0}$ defined in
Proposition~\ref{prop:existence_uniqueness_NF_Sn} defining the
function 
\begin{equation}
  \label{eq:S_0_pm}
  S_0^\pm= 1,
\end{equation}
which obviously satisfies the homogeneous Laplace equation on
$\widehat{\Omega}^\pm$.

\section{Iterative construction of the first terms of the expansion}
\label{sec:constr-first-terms}

In this section, we propose a step by step iterative procedure to
construct the first terms of the expansion up to order
$\delta^2$. Since $\theta \in (\pi, 2\pi)$,
$0<\lambda_1 < 1 < \lambda_2 < \lambda_1 + 1 < \lambda_3$.  It follows
that we shall consider the indexes $(n,q)$ (associated with increasing
powers of $\delta^{\lambda_n +q}$) in the following order: $(0,0)$,
$(1,0)$, $(0,1)$, $(2,0)$, $(1,1)$ and, in the case of
$\Theta > \frac{3\pi}{2}$, the couple $(3,0)$.

\subsection{Construction of the limit terms $u_{0,0}^\delta$, $\Pi_{0,0}^\delta$ and $U_{0,0,\pm}^\delta$}
\label{sec:constr-limit-terms}
The macroscopic term $u_{0,0}^\delta$ and the near field terms $U_{0,0,\pm}^\delta$  satisfy the following problems
\begin{equation}
  \label{eq:macroscopic_u_0_0}
  \left \lbrace
    \begin{array}{r@{\;}ll}
      -\Delta u_{0,0}^\delta - k_0^2 u_{0,0}^\delta & = 0 &\quad  \mbox{in} \; \OmegaTop \cup \OmegaBottom,\\[0.5em]
      \left[ u_{0,0}^\delta \right]_{\Gamma} = \left[ \partial_{x_2} u_{0,0}^\delta \right]_{\Gamma} &= 0 &\quad \mbox{on} \, \Gamma,\\[0.5em]
    %  \left[ \partial_{x_2} u_{0,0}^\delta \right]_{\Gamma} & = 0 &\quad \mbox{on} \, \Gamma, \\[0.5em]
      \nabla u_{0,0}^\delta \cdot \bn & = 0 &\quad \mbox{on} \, \Gamma_N,
      \\[0.5em]
      \nabla u_{0,0}^\delta \cdot \bn - \imath k_0 u_{0,0}^\delta & = 0 &\quad \mbox{on} \, \Gamma_{R,+},\\[0.5em]
      \nabla u_{0,0}^\delta \cdot \bn - \imath k_0 u_{0,0}^\delta & = -2\imath
      k_0 &\quad \mbox{on} \, \Gamma_{R,-},
    \end{array}
  \right. \ \mbox{and} \ 
  \left\lbrace\quad
    \begin{array}{rcll}
      -\Delta U_{0,0,\pm}^\delta & = &  0 &  \mbox{in} \; \widehat{\Omega}^\pm,\\
      \partial_\bn U_{0,0,\pm}^\delta & = & 0 & \mbox{on} \; \partial
                                              \widehat{\Omega}^\pm,
    \end{array}
  \right. 
\end{equation}
coupled by the matching condition
\eqref{eq:matching_conditions_global_main} (written here by  only identifying the term of  order $0$ in the two series)
\begin{equation}\label{MatchingConditionOrder0}
U_{0,0,\pm}^\delta(\frac{x}{\delta}) \approx u_{0,0}^\delta (x).
\end{equation} 
\subsubsection{Construction of the macroscopic term $u_{0,0}^\delta$}
As well-known, the limit term $u_{0,0}^\delta$ is regular. In fact, $u_{0,0}^\delta = u_{0,0}$ (it does not depend on $\delta$) is defined as the unique solution of~\eqref{eq:macroscopic_u_0_0}-(left) belonging to  $\Hone(\Omega)$. The absence of singular behavior in $u_{0,0}$  can be understood by the following \emph{formal} argument:  a singular term in~$u_{0,0}$ of the form $r^{-s}$ would necessary counterbalance a term of the form $(R^\pm)^{-s}$, $s>0$ in $U_{0,0}^\delta$, which, written in terms of the macroscopic variable $r^\pm$,  become $\delta^{s} (r^\pm)^{-s}$, and can therefore not be canceled at order $0$.  Similarly, due to Remark~\ref{RemAbsLog} a singular term of the form $\ln r$ is excluded at this stage.

\begin{remark} \label{RemarkConstruction1}More generally, the previous
  argument shows that for any $(n,q) \in \N^2$, a singular term
  in~$u_{n,q}^\delta$ of the form $r^{-s}$ cannot counterbalance a
  regular term of the near field term of the same order
  $U_{n,q}^\delta$.
\end{remark}     
It is well-known that $u_{0,0}$ admits the following expansion in the
matching zones
\begin{equation}
  \label{eq:macroscopic_u_0_0_general_representation2}
  u_{0,0}(r^\pm, \theta^\pm) = \ell_0^\pm(u_{0,0}) J_0 (k_0 r^\pm) + 
  \sum_{m=1}^\infty \ell_m^\pm(u_{0,0}) J_{\lambda_m} (k_0 r^\pm)
  w_{m,0,\pm}(\theta^\pm),
\end{equation}
where the functions $w_{m,0,\pm}$ are defined by
\eqref{eq:defintion_w_0_star}, the functions $J_{\lambda_m}$ are the
Bessel functions of first kind (see \eg Section~9.1 of
Ref.~\cite{MR0167642}) and the quantities $\ell_m^\pm(u_{0,0})$ are
complex constants.  Using the radial decomposition of $J_{\lambda_m}$,
we see that
\begin{multline}\label{Expansionu00}
u_{0,0}(r^\pm, \theta^\pm) = \ell_{0}^\pm(u_{0,0}) + \frac{\ell_{1}^\pm(u_{0,0})
    (k_0/2)^{\lambda_1}}{\Gamma(\lambda_1+1)}
  \left( r^\pm \right)^{\lambda_1}
  w_{1,0,\pm}(\theta^\pm) + \\ \frac{\ell_{2}^\pm(u_{0,0})
    (k_0/2)^{\lambda_2}}{\Gamma(\lambda_2+1)} 
  \left( r^\pm \right)^{\lambda_2}
  w_{2,0,\pm}(\theta^\pm)  + \frac{\ell_{3}^\pm(u_{0,0})
    (k_0/2)^{\lambda_3}}{\Gamma(\lambda_3+1)}   \left( r^\pm \right)^{\lambda_3}
  w_{3,0,\pm}(\theta^\pm) + O(r^2).
  \end{multline}
\subsubsection{Construction of $U_{0,0,\pm}^\delta$}
We now turn to the definition of the near field term $U_{0,0,\pm}^\delta$. In view of~\eqref{Expansionu00}, writting the matching condtions~\eqref{MatchingConditionOrder0} in term of the microscopic variable gives 
$\ell_0^\pm(u_{0,0}) \approx U_{0,0,\pm}^\delta$.
As a result, $U_{0,0,\pm}^\delta$ should  behave like $\ell_0^\pm(u_{0,0})$ in the matching zones (\ie for $R^\pm$ large). Consequently,  it is natural to define $U_{0,0,\pm}^\delta$  as
\begin{equation}\label{definitionU00}
U_{0,0,\pm}^\delta = U_{0,0,\pm} = \ell_0^\pm(u_{0,0}).
\end{equation}

\subsubsection{Construction of the periodic corrector  $\Pi_{0,0}^\delta$} Finally, using then relations \eqref{decompositionPi0},  the periodic boundary layer corrector is
\begin{equation}
  \label{eq:corrector_Pi_0_0}
  \Pi_{0,0}^\delta(x_1,\mX) =  \Pi_{0,0}(x_1,\mX) = \langle u_0 \rangle_{\Gamma}(x_1) \, (1  - \chi(X_2)).
\end{equation}
\subsection{Construction of the terms $u_{1,0}^\delta$, $\Pi_{1,0}^\delta$ and $U_{1,0,\pm}^\delta$}
\label{sec:constr-terms-u_10}
Reminding that $u_{1,0}^\delta$ fulfills the jump conditions \eqref{eq:macroscopic_u_n_0} (see also Section~\ref{SectionTransmissionConditionsBoundaryLayer}),
$u_{1,0}^\delta$ and $U_{1,0,\pm}^\delta$ satisfy
\begin{equation}
  \label{eq:macroscopic_u_1_0}
  \left \lbrace
    \begin{array}{r@{\;}ll}
      -\Delta u_{1,0}^\delta - k_0^2 u_{1,0}^\delta & = 0 &\quad  \mbox{in} \; \OmegaTop \cup \OmegaBottom,\\[0.5em]
      \left[ u_{1,0}^\delta \right]_{\Gamma} = \left[ \partial_{x_2} u_{1,0}^\delta \right]_{\Gamma} &= 0  &\quad \mbox{on} \, \Gamma ,\\[0.5em]
%      \left[ \partial_{x_2} u_{1,0}^\delta \right]_{\Gamma} & = 0  &\quad \mbox{on} \, \Gamma, \\[0.5em]
      \nabla u_{1,0}^\delta \cdot \bn & = 0 &\quad \mbox{on} \, \Gamma_N,
      \\[0.5em]
      \nabla u_{1,0}^\delta \cdot \bn - \imath k_0 u_{1,0}^\delta & = 0 &\quad \mbox{on} \, \Gamma_R,
    \end{array}
  \right. \quad \mbox{and} \quad
   \left \lbrace
  \begin{array}{rcll}
      -\Delta U_{1,0,\pm}^\delta & = &  0 &\mbox{in} \; \widehat{\Omega}^\pm,\\
      \partial_\bn U_{1,0,\pm}^\delta & = & 0  & \mbox{on} \; \partial
      \widehat{\Omega}^\pm,\\
   \end{array}
   \right.
   \end{equation}
   together with the matching condition
   \eqref{eq:matching_conditions_global_main} written up to order
   $\delta^{\lambda_1}$.  Outside the thin periodic layer, and thanks
   to~\eqref{Expansionu00} and \eqref{definitionU00}, one can verify
   that this matching condition can be rewritten as
\begin{equation}
  \label{eq:matching_conditions_order_lambda1_pm_y1_positive2}
     \frac{\ell_{1}^\pm(u_{0,0})
    (k_0/2)^{\lambda_1}}{\Gamma(\lambda_1+1)}  \left( r^\pm \right)^{\lambda_1} w_{1,0,\pm}(\theta^\pm)+  \delta^{\lambda_1} u_{1,0}^\delta
  (r^\pm, \theta^\pm) \approx  
  \delta^{\lambda_1} U_{1,0,\pm}^\delta  \left(\frac{r^\pm }{{\delta}}, \theta^\pm\right).
\end{equation}

Analogously to Section~\eqref{sec:constr-limit-terms}, we will start with the construction of the macroscopic far field $u_{1,0}^\delta$. Then, we will define the near field term $U_{1,0}^\delta$ and, finally, we will define the associated boundary layer corrector $\Pi_{1,0}^\delta$. 
\subsubsection{Construction of the macroscopic term $u_{1,0}^\delta$}
First, it is reasonable to construct $u_{1,0}^\delta$ as a regular function. Indeed, a singular behaviour in  $u_{1,0,\pm}^\delta$ of the form $r^{-s}$ or (resp. $\ln r$) would counterbalance a regular term of the right hand side of \eqref{eq:matching_conditions_order_lambda1_pm_y1_positive2}.  This singular term would necessary come from a regular  term in  $U_{1,0,\pm}^\delta$,  which, thanks to Remark~\ref{RemarkConstruction1} (resp. Remark~\ref{RemAbsLog}) cannot be cancelled at this stage. It is then reasonable (see Proposition~\ref{prop:existence_uniqueness_macro}) to define $u_{1,0}^\delta$ as
\begin{equation}\label{Defintionu10delta}
u_{1,0}^\delta  = u_{1,0} := 0.
\end{equation}
\subsubsection{Construction of $U_{1,0,\pm}^\delta$} Taking into
account~\eqref{Defintionu10delta} and writing the matching
condition~\eqref{eq:matching_conditions_order_lambda1_pm_y1_positive2}
in term of the microscopic variables gives
\begin{equation*}
\delta^{\lambda_1} \frac{\ell_{1}^\pm(u_{0,0})
    (k_0/2)^{\lambda_1}}{\Gamma(\lambda_1+1)}  \left( R^\pm \right)^{\lambda_1} w_{1,0,\pm}(\theta^\pm)  \approx  
  \delta^{\lambda_1} U_{1,0,\pm}^\delta  \left(\frac{r^\pm }{{\delta}}, \theta^\pm\right).
\end{equation*}
Then, $U_{1,0,\pm}^\delta$ has to grow like
$ \frac{\ell_{1}^\pm(u_{0,0})
  (k_0/2)^{\lambda_1}}{\Gamma(\lambda_1+1)} \left( R^\pm
\right)^{\lambda_1} w_{1,0,\pm}(\theta^\pm)$
towards infinity.  Of course, the term
$\big( R^\pm \big)^{\lambda_1} w_{1,0,\pm}(\theta^\pm)$ does not
satisfies the homogeneous
problem~\eqref{eq:macroscopic_u_1_0}-(right). However,
Proposition~\ref{prop:existence_uniqueness_NF_Sn} ensures the
existence of a function $S_1^\pm$, that satisfies
\eqref{eq:macroscopic_u_1_0}-(right) and behaves like
$\left( R^\pm \right)^{\lambda_1} w_{1,0,\pm}(\theta^\pm)$ at
infinity. Then, it is natural to define $U_{1,0,\pm}^\delta$ as
\begin{equation} \label{DefinitionU10}
  U_{1,0,\pm}^\delta =  U_{1,0,\pm} = \frac{\ell_{1}^\pm(u_{0,0})
    (k_0/2)^{\lambda_1}}{\Gamma(\lambda_1+1)}  S_1^\pm.
\end{equation}
In view of the asymptotic formula~\eqref{eq:block_decomposition_Sn} for $S_1^\pm$ ($\lambda_1 \notin \mathbb{N}$), outside the periodic layer, the asymptotic of $U_{1,0,\pm}$ is given by
\begin{multline}\label{ExpansionU10}
  U_{1,0,\pm} = \frac{\ell_{1}^\pm(u_{0,0})
    (k_0/2)^{\lambda_1}}{\Gamma(\lambda_1+1)}  \Big\{ (R^\pm)^{\lambda_1} w_{1,0,\pm}(\theta^\pm)  +  (R^\pm)^{\lambda_1-1}  w_{1,1,\pm}(\theta^\pm) \\ + (R^\pm)^{-\lambda_1}  \mathscr{L}_{-1}(S_1^\pm) w_{-1,0,\pm}(\theta^\pm)  \\
    + (R^\pm)^{-\lambda_2}  \mathscr{L}_{-2}(S_1^\pm) w_{-2,0,\pm}(\theta^\pm) 
  \Big\} + O(R^{\lambda_1 -2} \ln R).
\end{multline} 
Here we use the fact that $\lambda_1 -1$ is not a mutliple of $\lambda_1$ so that $w_{1,1,\pm}$ is independent of $\ln R^\pm$. 

\subsubsection{Construction of the periodic corrector  $\Pi_{1,0}^\delta$} Thanks to the relation~\eqref{decompositionPi0}, and since $u_{1,0}^\delta$ vanishes, its associated boundary corrector also vanishes, and we have
\begin{equation}
\Pi_{1,0}^\delta = \Pi_{1,0} = 0.
\end{equation}

\subsection{Construction of the terms $u_{0,1}^\delta$ and $U_{0,1,\pm}^\delta$}
\label{sec:constr-terms-u_01}

Reminding that $u_{0,1}^\delta$ fulfills the jump conditions
\eqref{eq:macroscopic_u_n_1} (cf. Appendix~\ref{SectionTransmissionConditionsBoundaryLayer}),
$u_{0,1}^\delta$ and $U_{0,1,\pm}^\delta$ satisfy the following problems
\begin{equation}
  \label{eq:macroscopic_u_0_1}
  \left \lbrace
    \begin{array}{r@{\;}ll}
      -\Delta u_{0,1}^\delta - k_0^2 u_{0,1}^\delta & = 0 &\quad  \mbox{in} \; \OmegaTop \cup \OmegaBottom,\\[0.5em]
      \left[ u_{0,1}^\delta \right]_{\Gamma} &= g_{0,1}  &\quad \mbox{on} \, \Gamma ,\\[0.5em]
      \left[ \partial_{x_2} u_{0,1}^\delta \right]_{\Gamma} & = h_{0,1}  &\quad \mbox{on} \, \Gamma, \\[0.5em]
      \nabla u_{0,1}^\delta \cdot \bn & = 0 &\quad \mbox{on} \, \Gamma_N,
      \\[0.5em]
      \nabla u_{0,1}^\delta \cdot \bn - \imath k_0 u_{0,1}^\delta & = 0 &\quad \mbox{on} \, \Gamma_R,
    \end{array}
  \right. \quad \mbox{and} \quad
  \left \lbrace
    \begin{array}{rcll}
      -\Delta U_{0,1,\pm}^\delta & = &  0 &\mbox{in} \; \widehat{\Omega}^\pm,\\
      \partial_\bn U_{0,1,\pm}^\delta & = & 0  & \mbox{on} \; \partial
                                               \widehat{\Omega}^\pm,\\
    \end{array}
  \right.
\end{equation}
with
$
\label{eq:rhs_g_0_1_h_0_1}
g_{0,1} = \mathcal{D}_{1} \, \partial_{x_1} \langle u_{0,0}
\rangle_{\Gamma} \, + \, \mathcal{D}_{2} \, \langle \partial_{x_2}
u_{0,0} \rangle_{\Gamma}$
and
$ h_{0,1} = \mathcal{N}_{1} \, \langle u_{0,0} \rangle_{\Gamma} +
\mathcal{N}_{2} \, \partial_{x_1}^2 \langle u_{0,0} \rangle_{\Gamma}
+ \mathcal{N}_{3} \, \partial_{x_1} \langle \partial_{x_2} u_{0,0}
\rangle_{\Gamma} $.
Thanks to
\eqref{Expansionu00}-\eqref{definitionU00}-\eqref{ExpansionU10}, the
matching condition \eqref{eq:matching_conditions_global_main}
written up to order $\delta$, can be written as
\begin{equation}
  \label{eq:matching_conditions_order_1_pm_y1_positive2}
  \delta\,  u_{0,1}^\delta
  ( r^\pm, \theta^\pm) \approx \delta
  \frac{\ell_{1}^\pm(u_{0,0}) 
    (k_0/2)^{\lambda_1}}{\Gamma(\lambda_1+1)} 
    (r^\pm )^{\lambda_1-1}
  w_{1,1,\pm}(\theta^\pm) +  \delta \,
   U_{0,1,\pm}^\delta
  \left(\frac{r^\pm}{\delta},\theta^\pm \right)
\end{equation}
outside the periodic layer. Analogously to Sections~\ref{sec:constr-limit-terms}
and~\ref{sec:constr-terms-u_10}, we will start with the construction
of the macroscopic far field $u_{0,1}^\delta$. Then, we will define
the near field term $U_{0,1}^\delta$. As we have already seen in the
previous sections, we can rebuild \emph{a posteriori} the boundary
layer corrector $\Pi_{0,1}^\delta$, but for the sake of brevity, from now on, we omit this reconstruction. 

\subsubsection{Construction of  the macroscopic term $u_{0,1}^\delta$} 
First, we remark that $u_{0,1}^\delta$ should contain a singular contribution of order $(r^\pm )^{\lambda_1-1}$
in order to cancel out the first term in the right-hand side of 
\eqref{eq:matching_conditions_order_1_pm_y1_positive2}. In fact, we shall see  (and this is a crucial point) that this singular contribution appears to be a consequence of the transmission condition in~\eqref{eq:macroscopic_u_0_1}-(left). Besides, according to Remark~\ref{RemarkConstruction1},  $u_{0,1}^\delta$ has no other singular behavior (any other singular behavior would stem from $U_{0,1,\pm}^\delta$ and could not be compensated at this stage).\\

Let us now investigate Problem~\eqref{eq:macroscopic_u_0_1}-(right). In view of the asymptotic behaviour of $u_{0,0}$~\eqref{Expansionu00} in the vicinity of the two corners, the functions $g_{0,1}$ and $h_{0,1}$ blow up at the extremities of $\Gamma$. Indeed, 
\begin{multline*}
 g_{0,1}(r^\pm) = \ell_1^\pm(u_{0,0})  \, \frac{
   (k_0/2)^{\lambda_1}}{\Gamma(\lambda_1 +1)} \, \left( \mp \,
   \lambda_1 \mathcal{D}_1  \langle  w_{1,0,\pm} \rangle \mp \mathcal{D}_2  \, \langle \partial_{\theta^\pm} w_{1,0,\pm} \rangle \right) \,  (r^\pm)^{\lambda_1 -1} \\  + O((r^\pm)^{\lambda_2 -1} )
=  \, \ell_1^\pm(u_{0,0}) \frac{   (k_0/2)^{\lambda_1}}{\Gamma(\lambda_1+1)} [ (r^\pm)^{\lambda_1 -1} w_{1,1,\pm}] + O((r^\pm)^{\lambda_2 -1} ) ,
\end{multline*}
where  the functions $w_{1,1,\pm}$ are defined in Appendix~\eqref{sec:defin-prof-funct-wn} (note that $V_{1,1} = W_{1}^{\mathfrak{t}}$ and $V_{1,2} = W_{1}^{\mathfrak{n}}$).  Similarly,
\begin{align*}
h_{0,1}(r^\pm) = \ell_1^\pm(u_{0,0}) \frac{   (k_0/2)^{\lambda_1}}{\Gamma(\lambda_1 +1)}   \left[ \partial_{x_2}  \left( (r^\pm)^{\lambda_1 -1}  w_{1,1,\pm} \right) \right] + O((r^\pm)^{\lambda_2 -2} )\ .
\end{align*}
We shall construct $u_{0,1}^\delta$ by lifting explicitly the singular part of $g_{0,1}$ and $h_{0,1}$.  To do so, we consider the function
\begin{equation}\label{DefinitionGrandJ1moins1}
  \mJ_{1,-1}^\pm(r^\pm, \theta^\pm) = 
  J_{\lambda_1-1}(k_0 r^\pm) \, w_{1,1,\pm}(\theta^\pm)
\end{equation}
that satisfies the homogeneous Helmholtz equation in
$\OmegaTop \cup \OmegaBottom$. According to the asymptotic of the
Bessel function of the first kind $J_{\lambda_1-1}$ (using
Equation~(9.1.10) of Ref.~\cite{MR0167642}), we notice that
$g_{0,1}\approx \frac{k_0}{2 \lambda_1} \ell_1^\pm(u_{0,0}) [
\mJ_{1,-1}^\pm]$
and
$h_{0,1}\approx \frac{k_0}{2 \lambda_1} \ell_1^\pm(u_{0,0})
[\partial_{x_2} \mJ_{1,-1}^\pm]$
in the neighborhood of the extremities of $\Gamma$. It means that
$\frac{k_0}{2 \lambda_1} \ell_1^\pm(u_{0,0})\mJ_{1,-1}^\pm$ is
potentially a good candidate to lift the singular parts of the
$g_{0,1}$ and $h_{0,1}$.  It is then natural to define
$u_{0,1}^\delta$ as
\begin{equation}\label{definitionu01delta}
u_{0,1}^\delta = u_{0,1} := \frac{k_0}{2 \lambda_1} \left(  \ell_1^+(u_{0,0}) \chi_\LBottom^+  \mJ_{1,-1}^+ + \ell_1^-(u_{0,0}) \chi_\LBottom^-  \mJ_{1,-1}^- \right) + \hat{u}_{0,1},
\end{equation}
where $\chi_\LBottom^\pm(\mx) = 1 - \chi(2 r^\pm / \LBottom)$ and the function  $\hat{u}_{0,1}$ is the unique solution  in $\Hone(\OmegaTop \cup \OmegaBottom)$ of the following problem:
  \begin{equation}
    \label{eq:macroscopic_v_0_1}
    \left \lbrace
      \begin{array}{r@{\;}ll}
        -\Delta  \hat{u}_{0,1} - k_0^2  \hat{u}_{0,1} & =  \hat{f}_{0,1} &\quad  \mbox{in} \; \OmegaTop \cup \OmegaBottom,\\[0.5em]
        \left[  \hat{u}_{0,1}\right]_{\Gamma} & = \hat{g}_{0,1}   &\quad \mbox{on} \, \Gamma ,\\[0.5em]
        \left[ \partial_{x_2}  \hat{u}_{0,1} \right]_{\Gamma} & = \hat{h}_{0,1}  &\quad \mbox{on} \, \Gamma , \\[0.5em]
        \nabla  \hat{u}_{0,1}\cdot \bn & = 0 &\quad \mbox{on} \, \Gamma_N,
        \\[0.5em]
        \nabla  \hat{u}_{0,1} \cdot \bn - \imath k_0  \hat{u}_{0,1}& = 0 &\quad \mbox{on} \, \Gamma_R,
      \end{array}
    \right.
    \mbox{with}
    \quad
    \begin{array}{l}
    \hat{f}_{0,1} =  \dsp \frac{k_0}{2 \lambda_1}  \sum_{\pm} \ell_{1}^\pm(u_{0,0})
    \commu{\Delta}{\chi_\LBottom^\pm} \mJ_{1,-1}^\pm,\\
    \hat{g}_{0,1}  = g_{0,1} -  \dsp \frac{k_0}{2 \lambda_1}   \sum_{\pm}  \ell_{1}^\pm(u_{0,0}) \chi_\LBottom^ \pm [ \mJ_{1,-1}^\pm] , \\
\hat{h}_{0,1} = h_{0,1} -  \dsp \frac{k_0}{2 \lambda_1}   \sum_{\pm}  \ell_{1}^\pm(u_{0,0}) \chi_\LBottom^ \pm [\partial_{x_2} \mJ_{1,-1}^\pm].
    \end{array} 
  \end{equation}
 Here $\commu{\Delta}{\chi_\LBottom^\pm}$ denotes the commutator operator given by
  $\commu{\Delta}{\chi_\LBottom^\pm} v = v \Delta \chi_\LBottom^\pm + 2
  \nabla \chi_\LBottom^\pm \cdot \nabla v$ (for any sufficiently smooth function $v$).
 The existence and uniqueness of $\hat{u}_{0,1}$ in $\Hone(\OmegaTop \cup \OmegaBottom)$  is ensured by Proposition~\ref{prop:existence_uniqueness_macro} since $\hat{f}_{0,1}$ is compactly supported,  $\hat{g}_{0,1} \in \HonehalfG$
  and $\hat{h}_{0,1} \in \LtwoG$. 

Moreover, the asymptotic expansion of $u_{0,1}$ in the matching zones is given by
\begin{multline}
  \label{eq:expansion_u_0_1_expansion}
  u_{0,1}(r^\pm, \theta^\pm) = \ell_{1}^\pm(u_{0,0}) 
\frac{(k_0/2)^{\lambda_1} }{\Gamma(\lambda_1+1)} (r^\pm)^{\lambda_1-1}
w_{1,1,\pm}(\theta^\pm) +  \ell_{0}^\pm(u_{0,1}) \\ + \ell_{2}^\pm(u_{0,0})  \frac{(k_0/2)^{\lambda_2} }{\Gamma(\lambda_2+1)} (r^\pm)^{\lambda_2-1}  w_{2,1,\pm}(\theta^\pm)
  \\+  \ell_{1}^\pm(u_{0,1}) \frac{(k_0/2)^{\lambda_1}}{\Gamma(\lambda_1+1)}
  \left( r^\pm \right)^{\lambda_1}  w_{1,0,\pm}(\theta^\pm) \\ + \ell_{3}^\pm(u_{0,0})  \frac{(k_0/2)^{\lambda_3} }{\Gamma(\lambda_3+1)} (r^\pm)^{\lambda_3-1}  w_{3,1,\pm}(\theta^\pm)
+ O\left( r^\pm \right), 
\end{multline}
where the quantities $\ell_{0}^\pm(u_{0,1})$ and
$\ell_{1}^\pm(u_{0,1})$ are complex constants. Obviously, the first
term of \eqref{eq:expansion_u_0_1_expansion} compensates the first
term of the right hand side of the matching
condition~\eqref{eq:matching_conditions_order_1_pm_y1_positive2}. The
presence of the terms in factor of $\ell_{2}^\pm(u_{0,0})$ and
$\ell_{3}^\pm(u_{0,0})$ results from the transmission condition (see
Section~3.3 in Ref.~\cite{Delourme.Semin.Schmidt:2015} for a similar
asymptotic). %
If $\Theta<\frac{3\pi}{2}$ and so $\lambda_3 > 2$, the last listed
term of the expansion~\eqref{eq:expansion_u_0_1_expansion} is
negligible with respect to $O(r^\pm)$.
\subsubsection{Construction of $U_{0,1,\pm}^\delta$}
Plugging the asymptotic expansion~\eqref{eq:expansion_u_0_1_expansion} of $u_{0,1}$  into the matching condition~\eqref{eq:matching_conditions_order_1_pm_y1_positive2} written in term of the microscopic variable (ignoring the terms in factor of $\delta^s$, $s>1$, which will be taken into account latter), we obtain
$$
\delta U_{0,1,\pm}^\delta(R^\pm, \theta^\pm)  \approx \delta  \ell_{0}^\pm(u_{0,1}).
$$  
We then see that $U_{0,1,\pm}^\delta$ should behave like $\ell_0^\pm(u_{0,1})$ at infinity. Thus, we define $U_{1,0,\pm}^\delta$ as
\begin{equation}\label{DefinitionU01}
  U_{0,1,\pm}^\delta = U_{0,1,\pm} = \ell_0^\pm(u_{0,1}).
\end{equation}
\subsection{Construction of the terms $u_{2,0}^\delta$ and $U_{2,0,\pm}^\delta$}
\label{sec:constr-terms-u_20}
Reminding that $u_{2,0}^\delta$ fulfills the jump conditions \eqref{eq:macroscopic_u_n_0},
$u_{2,0}^\delta$ and $U_{2,0,\pm}^\delta$ satisfy
\begin{equation}
  \label{eq:macroscopic_u_2_0}
  \left \lbrace
    \begin{array}{r@{\;}ll}
      -\Delta u_{2,0}^\delta - k_0^2 u_{2,0}^\delta & = 0 &\quad  \mbox{in} \; \OmegaTop \cup \OmegaBottom,\\[0.5em]
      \left[ u_{2,0}^\delta \right]_{\Gamma} =\left[ \partial_{x_2} u_{2,0}^\delta \right]_{\Gamma}&= 0  &\quad \mbox{on} \, \Gamma ,\\[0.5em]
     % \left[ \partial_{x_2} u_{2,0}^\delta \right]_{\Gamma} & = 0  &\quad \mbox{on} \, \Gamma, \\[0.5em]
      \nabla u_{2,0}^\delta \cdot \bn & = 0 &\quad \mbox{on} \, \Gamma_N,
      \\[0.5em]
      \nabla u_{2,0}^\delta \cdot \bn - \imath k_0 u_{2,0}^\delta & = 0 &\quad \mbox{on} \, \Gamma_R,
    \end{array}
  \right. \quad \mbox{and} \quad
   \left \lbrace
  \begin{array}{rcll}
      -\Delta U_{2,0,\pm}^\delta & = &  0 &\mbox{in} \; \widehat{\Omega}^\pm,\\
      \partial_\bn U_{2,0,\pm}^\delta & = & 0  & \mbox{on} \; \partial
      \widehat{\Omega}^\pm,\\
   \end{array}
   \right.
   \end{equation}
together with the matching condition~
\eqref{eq:matching_conditions_global_main} written up to
order $\delta^{\lambda_2}$,
%\begin{equation}
%u_{0,0} + \delta u_{0,1} + \delta^{\lambda_2} \, u_{2,0}^\delta \approx U_{0,0} + \delta^{\lambda_1} U_{1,0} + \delta \, U_{0,1} + \delta^{\lambda_2} U_{2,0}^\delta,
%\end{equation}
which, outside the thin periodic layer gives
\begin{multline}\label{eq:raccordOrdrelambda2}
 \frac{\ell_{2}^\pm(u_{0,0})
  (k_0/2)^{\lambda_2}}{\Gamma(\lambda_2+1)}  \left(  \left( r^\pm
\right)^{\lambda_2} w_{2,0,\pm}(\theta^\pm) + \delta  (r^\pm)^{\lambda_2-1}  w_{2,1,\pm}(\theta^\pm) \right)+  \delta^{\lambda_2} u_{2,0}^\delta \\ \approx    \delta^{\lambda_2} \frac{\ell_{1}^\pm(u_{0,0})
    (k_0/2)^{\lambda_1}}{\Gamma(\lambda_1+1)} \mathscr{L}_{-1}(S_1^\pm) \left( r^\pm
  \right)^{-\lambda_1} w_{-1,0,\pm}(\theta^\pm)  + \delta^{\lambda_2} U_{2,0}^\delta.
\end{multline}
Here, we used the asymptotic expansions~\eqref{eq:macroscopic_u_0_0_general_representation2}-\eqref{eq:expansion_u_0_1_expansion} for the far field terms $u_{0,0}$ and $u_{0,1}$, the definition~\eqref{definitionU00}-and \eqref{DefinitionU01} of the near field terms $U_{0,0}$ and $U_{0,1}$, and the asymptotic expansion~\eqref{ExpansionU10} of $U_{1,0}$.  Predicably, the matching process carried out in the previous subsections makes the expression of~\eqref{eq:raccordOrdrelambda2} relatively simple.

\subsubsection{Construction of  the macroscopic term $u_{2,0}^\delta$}
In view of the right-hand side of~\eqref{eq:raccordOrdrelambda2} (and, here again, Remark~\ref{RemarkConstruction1}), we
remark that $u_{2,0,\pm}^\delta$ should have a single singular contribution of the
form 
$\frac{\ell_{1}^\pm(u_{0,0})
    (k_0/2)^{\lambda_1}}{\Gamma(\lambda_1+1)} \mathscr{L}_{-1}(S_1^\pm) \left( r^\pm
  \right)^{-\lambda_1} w_{-1,0,\pm}(\theta^\pm)$.
As done for $u_{0,1}$ in Section~\ref{sec:constr-terms-u_01}, we shall construct $u_{2,0,\pm}^\delta$ by lifting explicitly its singular behaviour. We remark that $(r^\pm)^{-\lambda_1} w_{-1,0,\pm}$  does not satisfy the homogeneous Helmholtz equation in $\OmegaTop \cup \OmegaBottom$ (by construction it satisfies the homogeneous Laplace equation). However, we can substitute it with a multiple of the function
\begin{equation}
  \label{eq:singular_macro_s_2_-1}
  \mY_1^\pm(r^\pm, \theta^\pm) =
  Y_{\lambda_1}(k_0 r^\pm) w_{-1,0,\pm}(\theta^\pm), 
\end{equation}
which  behaves like 
 $
  - \frac{\Gamma(\lambda_1)}{\pi}  \left( \frac{k_0 }{2}
  \right)^{-\lambda_1} \left( r^\pm \right)^{-\lambda_1} w_{-1,0,\pm}(\theta^\pm)
 $
in the vicinity of the two corners and satisfies the homogeneous Helmholtz equation in $\OmegaTop \cup \OmegaBottom$. It this then natural to define $u_{2,0}^\delta$  as
\begin{equation}
u_{2,0}^\delta = u_{2,0} := \sum_{\pm} \ell_{2,0,-1}^\pm(u_{0,0}) \chi_\LBottom^\pm \mY_1^\pm + \hat{u}_{2,0}, \quad \ell_{2,0,-1}^\pm(u_{0,0})  := -\pi  \left( \frac{\ell_{1}^\pm(u_{0,0}) \mathscr{L}_{-1}(S_1^\pm)}{\Gamma(\lambda_1) \Gamma(\lambda_1+1)} \right) \left( \frac{k_0}{2}\right)^{\lambda_2},
\end{equation}
the cut-off functions $ \chi_\LBottom^\pm$  being defined in~\eqref{definitionu01delta} and the function $\hat{u}_{2,0}$ being the only $\Hone(\Omega)$ solution to the following problem:
  \begin{equation}
    \label{eq:macroscopic_v_2_0}
    \left\lbrace
      \begin{array}{r@{\;}ll}
        -\Delta \hat{u}_{2,0} - k_0^2 \hat{u}_{2,0} & = \hat{f}_{2,0} &\quad  \mbox{in} \; \OmegaTop \cup \OmegaBottom,\\[0.5em]
        \left[ \hat{u}_{2,0}\right]_{\Gamma}= \left[ \partial_{x_2} \hat{u}_{2,0} \right]_{\Gamma}  &= 0 &\quad \mbox{on} \, \Gamma ,\\[0.5em]
     %   \left[ \partial_{x_2} \hat{u}_{2,0} \right]_{\Gamma} & = 0 &\quad \mbox{on} \,
     %                                                                 \Gamma , \\[0.5em]
        \nabla \hat{u}_{2,0}\cdot \bn & = 0 &\quad \mbox{on} \, \Gamma_N,
        \\[0.5em]
        \nabla\hat{u}_{2,0} \cdot \bn - \imath k_0 \hat{u}_{2,0} & = 0 &\quad \mbox{on} \, \Gamma_R,
      \end{array}
    \right. \quad    \hat{f}_{2,0} := \sum_{\pm} \ell_{2,0,-1}^\pm(u_{0,0}) \commu{\Delta}{\chi_\LBottom^\pm} \mY_1^\pm .
  \end{equation}
 The function $\hat{f}_{2,0}$ being in $\Ltwo(\Omega)$ (it is compactly supported), Proposition~\ref{prop:existence_uniqueness_macro} ensures the well-posedness of~\eqref{eq:macroscopic_v_2_0} in $\Hone(\Omega)$. 
In the vicinity of the two corners, 
$\hat{f}_{2,0}$ vanishes, so that  
\begin{equation*}
  \hat{u} =  \sum_{m=0}^\infty \ell_m^\pm(u_{2,0}) J_{\lambda_m} (k_0 r^\pm)
  w_{m,0,\pm}(\theta^\pm), \quad \ell_m^\pm(u_{2,0}) \in \IC.
\end{equation*}
Using the radial decomposition of the
Bessel functions, coupled with the formula
\begin{equation*}
  Y_{\lambda_1}(k_0 r) = \frac{J_{\lambda_1}(k_0
 r) \cos(\lambda_1 \pi) - J_{-\lambda_1}(k_0
  r)}{\sin(\lambda_1 \pi)} \quad \text{(see Equation~(9.1.2) of Ref.~\cite{MR0167642})},
\end{equation*}
we see that
\begin{multline}\label{Expansionu20}
 u_{2,0}(r^\pm, \theta^\pm) =  \frac{\ell_{1}^\pm(u_{0,0})
    (k_0/2)^{\lambda_1}}{\Gamma(\lambda_1+1)} \mathscr{L}_{-1}(S_1^\pm) \left( r^\pm
  \right)^{-\lambda_1} w_{-1,0,\pm}(\theta^\pm) +
  \ell_{0}^\pm(u_{2,0})
  \\  +  \ell_{2,0,1}(u_{2,0})   \left( r^\pm \right)^{\lambda_1}
  w_{1,0,\pm}(\theta^\pm) + O(r^{\max(\lambda_2, -\lambda_1+2)}),
\end{multline}
where
\begin{equation}
\ell_{2,0,1}(u_{2,0}) =  \frac{\ell_{1}^\pm(u_{2,0})
    (k_0/2)^{\lambda_1}}{\Gamma(\lambda_1+1)} + 
    \frac{\ell_{2,0,-1}^\pm(u_{0,0}) \cos(\lambda_1 \pi)
      (k_0/2)^{\lambda_1}}{\sin(\lambda_1 \pi) \Gamma(\lambda_1+1)}.
\end{equation}
By construction, the first term of the right hand side of~\eqref{eq:raccordOrdrelambda2} is counterbalanced by the first term of \eqref{Expansionu20} multiplied by $\delta^{\lambda_2}$.
\subsubsection{Construction of $U_{2,0,\pm}^\delta$}
Writing the matching
condition~\eqref{eq:raccordOrdrelambda2}
with respect to the microscopic variable and taking into account \eqref{Expansionu20}, we obtain
\begin{equation}
  \nonumber
  \delta^{\lambda_2}  \left( \frac{\ell_{2}^\pm(u_{0,0})
      (k_0/2)^{\lambda_2}}{\Gamma(\lambda_2+1)}  \left(  \left( R^\pm
      \right)^{\lambda_2} w_{2,0,\pm}(\theta^\pm) +
      (R^\pm)^{\lambda_2-1}  w_{2,1,\pm}(\theta^\pm) \right)+
    \ell_{0}^\pm(u_{2,0}) \right)  \approx \delta^{\lambda_2} U_{2,0}^\delta.
\end{equation}

We then see that $U_{2,0,\pm}^\delta$ has to grow up like
\begin{equation*}
 \frac{\ell_{2}^\pm(u_{0,0})
  (k_0/2)^{\lambda_2}}{\Gamma(\lambda_2+1)}  \left(  \left( R^\pm
\right)^{\lambda_2} w_{2,0,\pm}(\theta^\pm) +   (R^\pm)^{\lambda_2-1}  w_{2,1,\pm}(\theta^\pm) \right) +  \ell_{0}^\pm(u_{2,0}). 
\end{equation*}
Of course,
$\left(R^\pm \right)^{\lambda_2} w_{2,0,\pm}(\theta^\pm) +
(R^\pm)^{\lambda_2-1} w_{2,1,\pm}(\theta^\pm) $
does not satisfy the homogeneous
problem~\eqref{eq:macroscopic_u_2_0}-(right). However,
Proposition~\eqref{prop:existence_uniqueness_NF_Sn} ensures the
existence of a function $S_2^\pm$, that satisfies
\eqref{eq:macroscopic_u_2_0}-(right) and such that
$S_2^\pm - \left( R^\pm \right)^{\lambda_2} w_{2,0,\pm}(\theta^\pm)
+(R^\pm)^{\lambda_2-1} w_{2,1,\pm}(\theta^\pm)$
tends to $0$ as $R^\pm$ tends to infinity
($\lambda_2 \notin \mathbb{N}$). Consequently, it is natural to define
$U_{2,0,\pm}^\delta$ as
\begin{equation} \label{DefinitionU20}
  U_{2,0,\pm}^\delta = U_{2,0,\pm} = \frac{\ell_{2}^\pm(u_{0,0})
  (k_0/2)^{\lambda_2}}{\Gamma(\lambda_2+1)} S_2^\pm + \ell_{0}^\pm(u_{2,0}).
\end{equation}
Outside the periodic layer, $ U_{2,0,\pm} $ admits the following asymptotic expansion at infinity
\begin{multline}\label{ExpansionU20}
U_{2,0,\pm} =\frac{\ell_{2}^\pm(u_{0,0})
  (k_0/2)^{\lambda_2}}{\Gamma(\lambda_2+1)}  \left(R^\pm \right)^{\lambda_2} w_{2,0,\pm}(\theta^\pm) \\+ \frac{\ell_{2}^\pm(u_{0,0})
  (k_0/2)^{\lambda_2}}{\Gamma(\lambda_2+1)} (R^\pm)^{\lambda_2-1}  w_{2,1,\pm}(\theta^\pm) + \ell_{0}^\pm(u_{2,0})  \\
  + \frac{\ell_{2}^\pm(u_{0,0})
  (k_0/2)^{\lambda_2}}{\Gamma(\lambda_2+1)} (R^\pm)^{-\lambda_1}  \mathscr{L}_{-1}(S_2^\pm) w_{-1,0,\pm}(\theta^\pm) +O((R^\pm)^{\lambda_2-2} \ln R^\pm ).
\end{multline}

\subsection{Construction of the terms $u_{1,1}^\delta$ and $U_{1,1,\pm}^\delta$}
\label{sec:constr-terms-u_11}
Reminding that $u_{1,1}^\delta$ fulfills the jump conditions
\eqref{eq:macroscopic_u_n_1} and that $u_{1,0}=0$ (see~\eqref{Defintionu10delta}),
$u_{1,1}^\delta$ and $U_{1,1,\pm}^\delta$ satisfy the following problems
\begin{equation}
  \label{eq:macroscopic_u_1_1}
  \left \lbrace
    \begin{array}{r@{\;}ll}
      -\Delta u_{1,1}^\delta - k_0^2 u_{1,1}^\delta & = 0 &\quad  \mbox{in} \; \OmegaTop \cup \OmegaBottom,\\[0.5em]
      \left[ u_{1,1}^\delta \right]_{\Gamma} = \left[ \partial_{x_2} u_{1,1}^\delta \right]_{\Gamma} &= 0 &\quad \mbox{on} \, \Gamma ,\\[0.5em]
   %   \left[ \partial_{x_2} u_{1,1}^\delta \right]_{\Gamma} & = 0 &\quad \mbox{on} \, \Gamma, \\[0.5em]
      \nabla u_{1,1}^\delta \cdot \bn & = 0 &\quad \mbox{on} \, \Gamma_N,
      \\[0.5em]
      \nabla u_{1,1}^\delta \cdot \bn - \imath k_0 u_{1,1}^\delta & = 0 &\quad \mbox{on} \, \Gamma_R,
    \end{array}
  \right. \quad \mbox{and} \quad
  \left \lbrace
    \begin{array}{rcll}
      -\Delta U_{1,1,\pm}^\delta & = &  0 &\mbox{in} \; \widehat{\Omega}^\pm,\\
      \partial_\bn U_{1,1,\pm}^\delta & = & 0  & \mbox{on} \; \partial
                                               \widehat{\Omega}^\pm,\\
    \end{array}
  \right.
\end{equation}
Outside the thin periodic layer, the matching condition~\eqref{eq:matching_conditions_global_main} written up to order $\delta^{\lambda_1+1}$ gives
%\begin{equation}
%u_{0,0} + \delta u_{0,1} + \delta^{\lambda_2} \, u_{2,0} + \delta^{\lambda_1 +1} u_{1,0}^\delta \approx U_{0,0} + \delta^{\lambda_1} U_{1,0} + \delta \, U_{0,1} + \delta^{\lambda_2} U_{2,0} + \delta^{\lambda_1 +1} U_{1,0}^\delta
%\end{equation}
%which, 
  \begin{equation}
   \delta  \frac{\ell_{1}^\pm(u_{0,1})
      (k_0/2)^{\lambda_1}}{\Gamma(\lambda_1+1)}  \left( r^\pm \right)^{\lambda_1} w_{1,0,\pm}(\theta^\pm)+  \delta^{\lambda_1+1} u_{1,1}^\delta(r^\pm, \theta^\pm)  \approx
    \delta^{\lambda_1+1} U_{1,1,\pm}^\delta  \left(\frac{r^\pm}{\delta}, \theta^\pm\right). 
  \end{equation}
  
A analogous analysis than the one made in Section~\ref{sec:constr-terms-u_10} yields 
\begin{equation} 
u_{1,1}^\delta= u_{1,1} = 0 \quad \mbox{and} \quad  U_{1,1,\pm}^\delta = U_{1,1,\pm} = \frac{\ell_{1}^\pm(u_{0,1})
    (k_0/2)^{\lambda_1}}{\Gamma(\lambda_1+1)}  S_1^\pm.
\end{equation} 
% and 
%\begin{equation} \label{DefinitionU11}
%  U_{1,1,\pm}^\delta = U_{1,1,\pm} = \frac{\ell_{1}^\pm(u_{0,1})
 %   (k_0/2)^{\lambda_1}}{\Gamma(\lambda_1+1)}  S_1^\pm.
%\end{equation}
Far from the periodic layer, the asymptotic behaviour of $U_{1,1,\pm}$ is given by
\begin{equation}\label{AsymptoticU11}
U_{1,1,\pm}(\R^\pm, \theta^\pm) = \frac{\ell_{1}^\pm(u_{0,1})
    (k_0/2)^{\lambda_1}}{\Gamma(\lambda_1+1)}  (R^\pm)^{\lambda_1} w_{1,0,\pm}(\theta^\pm) + O((R^\pm)^{\lambda_1 -1}).
\end{equation}
\subsection{Construction of the terms $u_{3,0}^\delta$ and $U_{3,0,\pm}^\delta$ for $\Theta > \frac{3\pi}{2}$ }
\label{sec:constr-terms-u_30}
Reminding that $u_{3,0}^\delta$ fulfills the jump conditions~\eqref{eq:macroscopic_u_n_0},
$u_{3,0}^\delta$ and $U_{3,0,\pm}^\delta$ satisfy
\begin{equation}
  \label{eq:macroscopic_u_3_0}
  \left \lbrace
    \begin{array}{r@{\;}ll}
      -\Delta u_{3,0}^\delta - k_0^2 u_{3,0}^\delta & = 0 &\quad  \mbox{in} \; \OmegaTop \cup \OmegaBottom,\\[0.5em]
      \left[ u_{3,0}^\delta \right]_{\Gamma} = \left[ \partial_{x_2} u_{3,0}^\delta \right]_{\Gamma} &= 0  &\quad \mbox{on} \, \Gamma ,\\[0.5em]
      %\left[ \partial_{x_2} u_{3,0}^\delta \right]_{\Gamma} & = 0  &\quad \mbox{on} \, \Gamma, \\[0.5em]
      \nabla u_{3,0}^\delta \cdot \bn & = 0 &\quad \mbox{on} \, \Gamma_N,
      \\[0.5em]
      \nabla u_{3,0}^\delta \cdot \bn - \imath k_0 u_{3,0}^\delta & = 0 &\quad \mbox{on} \, \Gamma_R,
    \end{array}
  \right. \quad \mbox{and} \quad
   \left \lbrace
  \begin{array}{rcll}
      -\Delta U_{3,0,\pm}^\delta & = &  0 &\mbox{in} \; \widehat{\Omega}^\pm,\\
      \partial_\bn U_{3,0,\pm}^\delta & = & 0  & \mbox{on} \; \partial
      \widehat{\Omega}^\pm,\\
   \end{array}
   \right.
   \end{equation}
Outside the periodic layer, 
collecting the asymptotic representation~\eqref{Expansionu00}-\eqref{Expansionu00}-\eqref{eq:expansion_u_0_1_expansion} of the far field terms, the defintions \eqref{definitionU00}-\eqref{DefinitionU01} of $U_{0,0}$ and $U_{0,1}$ and the asymptotic expansions~\eqref{ExpansionU10}-\eqref{ExpansionU20}-\eqref{AsymptoticU11} of $U_{1,0}$, $U_{2,0}$, $U_{1,1}$, the matching condition~\eqref{eq:matching_conditions_global_main} written up to
order $\delta^{\lambda_3}$ becomes
\begin{multline}
  \label{eq:matching_conditions_order_lambda3_pm_y1_positive2}
  \frac{\ell_{3}^\pm(u_{0,0})
    (k_0/2)^{\lambda_3}}{\Gamma(\lambda_3+1)}   \left( \left( r^\pm \right)^{\lambda_3}
    w_{3,0,\pm}(\theta^\pm)  + \delta (r^\pm)^{\lambda_3-1}  w_{3,1,\pm}
  \right) \\ + \delta^{\lambda_2}  
  \ell_{2,0,1}(u_{2,0})  \left( r^\pm \right)^{\lambda_1} 
  w_{1,0,\pm} \\ + \delta^{\lambda_3}u_{3,0}^\delta   \approx  
  \delta^{\lambda_3} \left( \sum_{i=1}^2 \frac{\ell_{3-i}^\pm(u_{0,0})
      (k_0/2)^{\lambda_{3-i}}}{\Gamma(\lambda_{3-i}+1)}  (r^\pm)^{-\lambda_i}  \mathscr{L}_{-i}(S_{3-i}^\pm) w_{-i,0,\pm}  \right)  + \delta^{\lambda_3}U_{3,0}^\delta.
\end{multline}

\subsubsection{Construction of  the macroscopic term $u_{3,0}^\delta$}
In view of the right hand side of~\eqref{eq:matching_conditions_order_lambda3_pm_y1_positive2}, we
remark that $u_{3,0}^\delta$ has two singular contributions of the
form $(r^\pm)^{-\lambda_2}$ and $(r^\pm)^{-\lambda_1}$. Defining
\begin{equation}
  \label{eq:singular_macro_Y2}
  \mY_2^\pm(r^\pm, \theta^\pm) =
  Y_{\lambda_2}(k_0 r^\pm) w_{-2,0,\pm}(\theta^\pm), \quad  \ell_{3,0,-i}^\pm(u_{0,0}) = - \pi \mathscr{L}_{-i}(S_{3-i}^\pm) \ell_{3-i}^\pm(u_{0,0})
  \frac{(k_0/2)^{\lambda_3}}{\Gamma(\lambda_i) \Gamma(\lambda_{3-i}+1)},
\end{equation}
 the function $\sum_{i=1}^2\delta^{\lambda_3/2}
\ell_{3,0,-i}^\pm(u_{0,0}) \mY_i^\pm(r^\pm, \theta^\pm)$ ($\mY_1^\pm$
defined in~\eqref{eq:singular_macro_s_2_-1}) can counterbalance the
first two terms of the right hand side of
\eqref{eq:matching_conditions_order_lambda3_pm_y1_positive2}. This
remark leads us to define $u_{3,0}^\delta$ as
\begin{equation}
u_{3,0}^\delta = u_{3,0}  := \hat{u}_{3,0} + \sum_{\pm} \sum_{i=1}^2  \ell_{3,0,-i}^\pm(u_{0,0}) \chi_\LBottom^\pm \mY_i^\pm,
\end{equation} 
where $\chi_\LBottom^\pm$ is defined in~\eqref{definitionu01delta} and the function $\hat{u}_{3,0}$ is the unique function of $\Hone(\Omega)$ satisfying 
  \begin{equation}
    \label{eq:macroscopic_v_3_0}
    \left\lbrace
      \begin{array}{r@{\;}ll}
        -\Delta \hat{u}_{3,0} - k_0^2 \hat{u}_{3,0} & = \hat{f}_{3,0} &\quad  \mbox{in} \; \OmegaTop \cup \OmegaBottom,\\[0.5em]
        \left[ \hat{u}_{3,0}\right]_{\Gamma}= \left[ \partial_{x_2} \hat{u}_{3,0} \right]_{\Gamma}  &= 0 &\quad \mbox{on} \, \Gamma ,\\[0.5em]
     %   \left[ \partial_{x_2} \hat{u}_{3,0} \right]_{\Gamma} & = 0 &\quad \mbox{on} \,
     %                                                                 \Gamma , \\[0.5em]
        \nabla \hat{u}_{3,0}\cdot \bn & = 0 &\quad \mbox{on} \, \Gamma_N,
        \\[0.5em]
        \nabla\hat{u}_{3,0} \cdot \bn - \imath k_0 \hat{u}_{3,0} & = 0 &\quad \mbox{on} \, \Gamma_R,
      \end{array}
    \right.
  \end{equation}
  with
  \begin{equation*}
    \hat{f}_{3,0} := \sum_{i=1}^2 \sum_{\pm} \ell_{3,0,-i}^\pm(u_{0,0}) \commu{\Delta}{\chi_\LBottom^\pm} \mY_i^\pm .
  \end{equation*}
 The well-posedness of~\eqref{eq:macroscopic_v_3_0} directly follows from Proposition~\ref{prop:existence_uniqueness_macro}. In the matching zones, 
 \begin{multline}\label{Expansionu30}
   u_{3,0}(r^\pm, \theta^\pm) = -
   \frac{\ell_{3,0,-2}^\pm(u_{0,0})
    (k_0/2)^{-\lambda_2}}{\sin(\lambda_2 \pi) \Gamma(1-\lambda_2)}
  \left( r^\pm \right)^{-\lambda_2} w_{-2,0,\pm}(\theta^\pm) \\ -
   \frac{\ell_{3,0,-1}^\pm(u_{0,0})
    (k_0/2)^{-\lambda_1}}{\sin(\lambda_1 \pi) \Gamma(1-\lambda_1)}
  \left( r^\pm \right)^{-\lambda_1} w_{-1,0,\pm}(\theta^\pm) +
   \ell_{0}^\pm(u_{3,0}) + O(r^{-\lambda_2+2}),
\end{multline}

\subsubsection{Construction of $U_{3,0,\pm}^\delta$}
Writing the matching
condition~\eqref{eq:matching_conditions_order_lambda3_pm_y1_positive2} in term of the microscopic variables 
and taking into account \eqref{Expansionu30}, we obtain
\begin{multline}
\delta^{\lambda_3}  \left( \frac{\ell_{3}^\pm(u_{0,0})
    (k_0/2)^{\lambda_3}}{\Gamma(\lambda_3+1)}   \left( \left( r^\pm \right)^{\lambda_3}
  w_{3,0,\pm}(\theta^\pm)  + (R^\pm)^{\lambda_3-1}  w_{3,1,\pm}(\theta^\pm)  \right) \right. \\  + \left. 
    \ell_{2,0,1}(u_{2,0})  \left( R^\pm \right)^{\lambda_1}
  w_{1,0,\pm}(\theta^\pm)  +  \ell_{0}^\pm(u_{3,0})  \right) \approx \delta^3 U_{3,0}^\delta.
\end{multline}
As in Section~\ref{sec:constr-terms-u_20}, it is natural to define $U_{3,0,\pm}^\delta$ as
\begin{multline} \label{DefinitionU30}
  U_{3,0,\pm}^\delta =  U_{3,0,\pm} :=  \frac{\ell_{3}^\pm(u_{0,0})
  (k_0/2)^{\lambda_3}}{\Gamma(\lambda_3+1)} S_3^\pm  \\ + \frac{\big(
    \ell_{1}^\pm(u_{2,0}) \sin \lambda_1\pi +
    \ell_{2,0,-1}^\pm(u_{2,0}) \cos \lambda_1\pi \big)
    (k_0/2)^{\lambda_2}}{\Gamma(\lambda_1+1) \sin \lambda_1\pi}
  S_1^\pm + \ell_{0}^\pm(u_{3,0}).
\end{multline}
\subsection{The 'automatic' matching inside the layer }\label{SectionRaccordCouche}
We end this part by showing, that far and near field expansions automatically match in the matching areas. For the sake of concision, we consider the case $\theta \in (\pi, 3\pi/2)$ and we only investigate the matching area located in the vicinity of the right corner $\mathbf{x}_{O}^+$. \\

\noindent On the one hand, collecting the results of the present section and Appendix~\ref{SectionTransmissionConditionsBoundaryLayer}, we see that the boundary layer correctors are given by 
\begin{equation}\label{BLdef}
  \begin{aligned}
    \dsp \Pi_{0,0} (x_1, \mX) & = \langle u_{0,0} \rangle_{\Gamma}(x_1)
    V_0(\mX), \quad \Pi_{1,0} = 0, \\ \dsp \Pi_{0,1}(x_1, \mX) (x_1,
    \mX)  & = \langle u_{0,1} \rangle_\Gamma(x_1)\,    V_{0}(\mX)  \,
    {+\, \partial_{x_1} \langle u_{0,0} \rangle_\Gamma(x_1) \,
      V_{1,1}(\mX)}  \\ & \dsp \qquad \;+ \; \langle \partial_{x_2} u_{0,0}
    \rangle_\Gamma(x_1) \, V_{1,2}(\mX), \\
    Pi_{2,0} (x_1, \mX)  & =    \langle u_{2,0} \rangle_{\Gamma}(x_1)\,  V_0(\mX), \quad \quad \Pi_{1,1} = 0. 
  \end{aligned}
\end{equation}  
Then, the asymptotic expansion for the boundary layer in the matching areas can be directly written introducing the asymptotic formula~\eqref{Expansionu00}-\eqref{eq:expansion_u_0_1_expansion}-\eqref{Expansionu20} of the macroscopic terms $u_{0,0}$, $u_{0,1}$ and $u_{2,0}$ into~\eqref{BLdef}.  Writting the obtained asymptotic expansions in term of the microscopic variables, noticing that $V_0 = W_0^{\mathfrak{t}}$, $V_{1,1} = W_1^{\mathfrak{t}}$ and  $V_{1,2} = W_1^{\mathfrak{n}}$ (defined in~\eqref{ProblemWpt}-\eqref{ProblemWpn}), and summing over $(n,q) \in \N_2$, we obtain
   \begin{equation}
   \begin{aligned}
  &  \sum_{(n,q)\in \N_2}  \delta^{\lambda_n +q} \, \Pi_{n,q}   \approx 
   \ell_{0}^\pm(u_{0,0}) W_0^{\mathfrak{t}} \\
& \qquad  + \delta^{\lambda_1} \left\{  \frac{\ell_{1}^\pm(u_{0,0}) 
    (k_0/2)^{\lambda_1}}{\Gamma(\lambda_1+1)} \right\} \big\{ |X_1|^{\lambda_1} 
 p_{1,0,+}+ |X_1|^{\lambda_1-1}   p_{1,1,+}  \\ & \qquad\qquad+ |X_1|^{-\lambda_1}  \mathscr{L}_{-1}(S_1^\pm)  p_{-1,0,+} W_0^{\mathfrak{t}}   \big\} \\
 &   \qquad+ \delta  \, \ell_{0}^\pm(u_{0,1})  W_0^{\mathfrak{t}}  \\
    &   \qquad+ \delta^{\lambda_2} \left\{  \frac{\ell_{2}^\pm(u_{0,0})
    (k_0/2)^{\lambda_2}}{\Gamma(\lambda_2+1)} \right\} \left\{ |X_1|^{\lambda_2} 
  p_{2,0,+} + |X_1|^{\lambda_2-1}  p_{2,1,+}    \right\}  +  \delta^{\lambda_2} \ell_{0}^\pm(u_{2,0}) \, W_0^{\mathfrak{t}} \\
 &  \qquad+ \delta^{\lambda_1+1} \left\{   \frac{\ell_{1}^\pm(u_{0,1})(k_0/2)^{\lambda_1}}{\Gamma(\lambda_1+1)}  \right\} |X_1|^{\lambda_1} 
   p_{1,0,+},
   \end{aligned} \label{FarFieldLayerFin}
    \end{equation} 
the function $p_{n,r,t,\pm}$ being defined in~\eqref{sec:defin-prof-funct_p}.  \\

\noindent On the other hand, using the definitions~\eqref{definitionU00}-\eqref{DefinitionU10}-\eqref{DefinitionU01}-\eqref{DefinitionU20} of the near field terms, the truncated series of the near field  is given by
\begin{multline}\label{NearFieldLayer}
 \sum_{(n,q)\in \N_2}  \delta^{\lambda_n +q} \, \Pi_{n,q}    =    \ell_0^+(u_{0,0}) + \delta^{\lambda_1}  \frac{\ell_{1}^+(u_{0,0})
    (k_0/2)^{\lambda_1}}{\Gamma(\lambda_1+1)}  S_1^+  +  \delta \ell_0^+(u_{0,1})\\
     + \delta^{\lambda_2} \left( \frac{\ell_{2}^+(u_{0,0})
  (k_0/2)^{\lambda_2}}{\Gamma(\lambda_2+1)} S_2^+ + \ell_{0}^+(u_{2,0}) \right)  
    + \delta^{\lambda_1 + 1} \frac{\ell_{1}^\pm(u_{0,1})
    (k_0/2)^{\lambda_1}}{\Gamma(\lambda_1+1)}  S_1^+ .
 \end{multline}
 Introducing the asymptotic expansions~\eqref{eq:block_decomposition_Sn} of the functions $S_1^+$ and $S_2^+$ in the vicinity of the periodic layer into~\eqref{NearFieldLayer}, we see that the near field expansions~\eqref{NearFieldLayer} and ~\eqref{FarFieldLayerFin} coïncide (up to a given order). 
%\begin{remark}
 % In the particular case $\Theta = \frac{3\pi}{2}$, $\lambda_3=2$ and
 % Proposition~\ref{prop:existence_uniqueness_NF_Sn} shows that the
  %function $S_3^\pm$, and therefore the near-field $U_{3,0}^\delta$,
 % has a $\ln R^\pm$ behavior. Writing it in the intermediate variable
 % gives a $\ln \frr^\pm - \frac{1}{2} \ln \delta^\pm$
 % contribution. This contribution has to be canceled by a
 % $\ln r^\pm$ contribution in the far field $u_{3,0,\pm}^\delta$. Not
 % only will the function $u_{3,0,\pm}^\delta(\sqrt{\delta} \frr^\pm)$
 % have a contribution in $Y_0(\sqrt{\delta} \frr^\pm)$, but this
  %function will also have a real dependency with respect to $\ln \delta$.
%\end{remark}

\section{Error estimates}
\label{SectionErrorEstimates}
\label{sec:error_estimates}

To finish this paper, we give the sketch of the proof of
Theorem~\ref{theo:error_estimate_optimal}.  As usual for this kind of
work (see \eg Section~6 of Ref.~\cite{ResearchReportAKB}, Section~3
of Ref.~\cite{fente2}, Section~2.5 of Ref.~\cite{Jol-Sem-2008}),
the proof of the previous result is based on the construction of an
approximation $u_{N_0}^\delta$ of $u^\delta$ in the whole domain
$\Omega^\delta$. To do so, we define the following four truncated
series (at order $N_0$), corresponding to the truncated series of the
macroscopic terms, the boundary layer terms and the near field terms:
\begin{enumerate}
\item[-] The truncated series $u_{\text{macro},N_0}^\delta$ of the macroscopic terms:  the macroscopic
  approximation is defined by 
  \begin{equation}
    \label{eq:u_macro_N0}
    u_{\text{macro},N_0}^\delta(\mx) = \chi_{\text{macro}}^\delta(\mx)
    \sum_{(n,q)\in \IN_{N_0}} \delta^{\lambda_n+q} u_{n,q}^\delta(\mx),
  \end{equation}
  where the set $\IN_{N_0}$ is the set of indexes $(n,q)\in \N^2$
    for which $\lambda_n+q < N_0$, and the macroscopic cut-off function
  $ \chi_{\text{macro}}^\delta$ is given by
  \begin{multline}
    \label{eq:chi_macro}
    \chi_{\text{macro}}^\delta(\mx) = \chi_+ \left(
      \frac{x_1-L}{\delta} \right) \chi_- \left( \frac{x_1+L}{\delta}
    \right) \chi \left( \frac{x_2}{\delta} \right) \\ + \sum_{\pm}
    \chi_{\text{macro},\pm} \left( \frac{x_1 \mp L}{\delta},
      \frac{x_2}{\delta} \right) \left( 1 - \chi_\pm \left(
        \frac{x_1 \mp L}{\delta} \right) \right).
  \end{multline}
 We notice that the function $ \chi_{\text{macro}}^\delta$  is equal to $1$ for $|x_1| > \LBottom$ and coincides
  with $\chi \left( \frac{x_2}{\delta} \right)$ in the region
  $|x_1| < \LBottom - \delta$ (The cut-off functions $\chi$ and
  $\chi_\pm$ are defined in \eqref{defchi} and
  \eqref{eq:defchipm}, while the cut-off functions $\chi_{\text{macro},+}$, represented on Fig.~\ref{fig:chi_macro_plus}, satisfies \eqref{eq:chi_macro_property}). 
 \item[-] The truncated  series $\Pi_{N_0}^\delta$ of the periodic correctors is
  given by
  \begin{equation}
    \label{eq:Pi_N0}
    \Pi_{N_0}^\delta(\mx) = \chi_+ \left(
      \frac{x_1-L}{\delta} \right) \chi_- \left( \frac{x_1+L}{\delta}
    \right) \chi \left( \frac{2x_2}{\min(\HBottom,\HTop)} \right) 
    \sum_{(n,q)\in \IN_{N_0}} \delta^{\lambda_n+q} \Pi_{n,q}^\delta(\mx).
  \end{equation}
  The use of the function
  $\chi_+ \left( \frac{x_1-L}{\delta} \right) \chi_- \left(
    \frac{x_1+L}{\delta} \right)$ permits us to localize the function
  $\Pi_{N_0}^\delta(\mx)$ in the domain $|x_1|<\LBottom$ while the
  introduction of the function $\chi \left(
    \frac{2x_2}{\min(\HBottom,\HTop)} \right)$ ensures that
  $\Pi_{N_0}^\delta(\mx)$ satisfies Neumann boundary condition on
  $\Gamma_N$.
\item[-]  The truncated near field series $U_{N_0,\pm}^\delta$ are given
  by
  \begin{equation}
    \label{eq:U_N0_pm}
    U_{N_0,\pm}^\delta = \sum_{(n,q)\in \IN_{N_0}}
    \delta^{\lambda_n+q} U_{n,q,\pm}^\delta \left(
      \frac{\mx-\mx_O^\pm}{\delta} \right).
  \end{equation}
%\end{itemize}
\end{enumerate}

Based, on these truncated series, the global approximation $u_{N_0}^\delta$ is defined by
\begin{equation}
  \label{eq:global_approximation}
  u_{N_0}^\delta = \chi_+^\delta \, U_{N_0,+}^\delta \ +
  \ \chi_-^\delta \, U_{N_0,-}^\delta \ + \
  (1-\chi_+^\delta-\chi_-^\delta) \, (u_{\text{macro},N_0}^\delta
  + \Pi_{N_0}^\delta),
\end{equation}
where $\chi_\pm^\delta(\mx) = \chi ( {\lvert \mx-\mx_O^\pm
      \rvert}/{\sqrt{\delta}} )$.
We point out that  $u_{N_0}^\delta$ coincides
with $U_{N_0,\pm}^\delta$ in the vicinity of the two corners, with
$\Pi_{N_0}^\delta$ in the vicinity of the layer and with
$u_{\text{macro},N_0}^\delta$ away from the corner and the periodic
layer. 
 \begin{remark}
The overall approximation $u_{N_0}^\delta$ can be computed for any real number $\N_0$ as soon as the terms of the the far and near field expansions are defined. In Section~\ref{sec:constr-first-terms}, we only constructed the first terms of these expansions, but the next order terms can naturally be derived using the same methodology. 
\end{remark}
The overall approximation being constructed, it remains to evaluate the $\Hone$-norm of the error
$e_{N_0}^\delta = u^\delta - u_{N_0}^\delta$ in $\Omega^\delta$. It is
in fact sufficient to estimate the residue $(\Delta+(k^\delta)^2) e_{N_0}^\delta$ and
the Neumann trace $\partial_\bn e_{N_0}^\delta$. Indeed, the estimation
of $\xnorm{e_{N_0}^\delta}{\Hone(\Omega^\delta)}$ directly results
from a straightforward modification of the uniform stability result 
\eqref{eq:prop_norm_u_norm_f}
(Proposition~\ref{prop:existence_uniqueness_u_delta}): there exists a
constant $C>0$ independent of $\delta$ (but depending on other
  parameters such as $N_0$ and hole shape) such that, for $\delta$ small enough,
\begin{equation}
  \label{eq:estimate_e_by_residue}
  \xnorm{e_{N_0}^\delta}{\Hone(\Omega^\delta)} \leqslant C \big(
  \xnorm{(\Delta+(k^\delta)^2) e_{N_0}^\delta}{\Ltwo(\Omega^\delta)} + \xnorm{\partial_\bn
    e_{N_0}^\delta}{\Ltwo(\Gamma^\delta)} \big).
\end{equation}
Similarly to the proof of Proposition~6.3 in
Ref.~\cite{ResearchReportAKB}, we decompose the error of the
residue into a modeling error (measuring how the truncated far and
near field expansions fail to satisfies the Helmholtz equation and the
Neumann boundary condition) and a matching error (measuring the
difference between the far and near field expansions in the matching
areas), and we obtain the following proposition:
\begin{proposition}
  Let $N_0 \in \IR$. There exists a constant $C \geq 0$, a constant $\kappa = \kappa(N_0) > 0$ and a constant
  $\delta_0 > 0$ such that, for any $\delta \in (0,\delta_0)$,
  \begin{equation}
    \label{eq:prop_error_estimate}
    \xnorm{(\Delta+(k^\delta)^2) e_{N_0}^\delta}{\Ltwo(\Omega^\delta)} + \xnorm{\partial_\bn
      e_{N_0}^\delta}{\Ltwo(\Gamma^\delta)} \leqslant C (\ln
    \delta)^\kappa \delta^{\frac{N_0}{2}-\frac{5}{2}}.
  \end{equation}
As a consequence, there exists
  a constant $C > 0$, a constant $\kappa = \kappa(N_0) > 0$ and a constant
  $\delta_0 > 0$ such that, for any $\delta \in (0,\delta_0)$,
  \begin{equation}
    \label{eq:lema:error_estimate_global_error}
    \xnorm{e_{N_0}^\delta}{\Hone(\Omega^\delta)} \leqslant C (\ln
    \delta)^\kappa \delta^{\frac{N_0}{2}-\frac{5}{2}}.
  \end{equation}
\end{proposition}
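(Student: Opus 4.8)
The plan is to estimate the residue of the global approximation $u_{N_0}^\delta$ defined in~\eqref{eq:global_approximation} region by region, and then invoke the stability estimate~\eqref{eq:estimate_e_by_residue} to conclude. First I would split $\Omega^\delta$ into three overlapping zones matching the structure of $u_{N_0}^\delta$: the two \emph{near field zones} $\{|\mx - \mx_O^\pm| < 2\sqrt\delta\}$ where $u_{N_0}^\delta = U_{N_0,\pm}^\delta$, the \emph{layer zone} $\{|x_1| < \LBottom - \delta,\ |x_2| < 2\delta\}$ where $u_{N_0}^\delta = u_{\text{macro},N_0}^\delta + \Pi_{N_0}^\delta$, the \emph{far field zone} away from corners and layer, and the two \emph{matching crowns} $\sqrt\delta \leq |\mx - \mx_O^\pm| \leq 2\sqrt\delta$ where the cut-off $\chi_\pm^\delta$ transitions and both expansions are active. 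On the interior of each of the first three zones the residue is a pure \emph{modelling error}: it comes from the tail terms of the truncated series, because by construction (Sections~\ref{SectionTransmissionConditionsBoundaryLayer}, \ref{sec:constr-singularities}, \ref{sec:constr-first-terms}) each $u_{n,q}^\delta$, $\Pi_{n,q}^\delta$, $U_{n,q,\pm}^\delta$ with $\lambda_n + q < N_0$ cancels its corresponding power of $\delta$ in the Helmholtz equation and in the Neumann trace on $\Gamma^\delta$, up to terms of order $\delta^{\lambda_n+q+1}$ or higher; the first uncancelled power is $\delta^{N_0}$ before differentiation, and each application of $\Delta$ after the change of scale $\mX = \mx/\delta$ (or after differentiating the boundary-layer correctors) costs two powers of $\delta$, which already suggests the exponent $N_0/2$ in~\eqref{eq:prop_error_estimate}; the extra $\delta^{-5/2}$ and the $(\ln\delta)^\kappa$ come respectively from integrating over the thin layer / small holes (whose area or boundary length scales like a power of $\delta$) and from the polynomial dependence of the terms on $\ln\delta$ noted throughout.

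Next I would treat the commutators $[\Delta, \chi_\pm^\delta]$, $[\Delta,\chi_{\text{macro}}^\delta]$ and the transition factors in $\Pi_{N_0}^\delta$: these generate a \emph{matching error} supported in the matching crowns and in the transition strip $L - 2\delta < |x_1| < L + 2\delta$. In the matching crowns, the key input is Section~\ref{SectionRaccordCouche} together with the block decompositions~\eqref{eq:block_decomposition_Sn} of the $S_n^\pm$ and the expansions~\eqref{Expansionu00}, \eqref{ExpansionU10}, \eqref{eq:expansion_u_0_1_expansion}, \eqref{Expansionu20}, \eqref{ExpansionU20}, \eqref{AsymptoticU11}: by the iterative construction the far and near field truncated series agree up to order $N_0$, so their difference in the crown $\{|\mx-\mx_O^\pm| \sim \sqrt\delta\}$ is $O(\delta^{N_0/2}(\ln\delta)^\kappa)$ in the right Sobolev norm, and the gradient of the cut-off $\chi_\pm^\delta$ contributes a factor $\delta^{-1/2}$; multiplying and integrating over the crown (area $\sim\delta$) reproduces the stated bound. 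The "automatic matching inside the layer" remark ensures this also works for the part of the crown close to $\Gamma$, so no separate argument is needed there. I would also need the weighted estimates for the remainders $o((R^\pm)^{-\lambda_k})$ in~\eqref{eq:block_decomposition_Sn} made quantitative via the non-uniform weighted Sobolev spaces of Nazarov~\cite{Nazarov205}, as flagged in the proof of Proposition~\ref{prop:existence_uniqueness_NF_Sn}.

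Assembling these pieces gives~\eqref{eq:prop_error_estimate}, and then~\eqref{eq:lema:error_estimate_global_error} is immediate from~\eqref{eq:estimate_e_by_residue}. Finally, the theorem's estimates~\eqref{eq:EstimationOptimale_0}--\eqref{EstimationOptimale} follow by choosing $N_0$ appropriately (e.g. $N_0 = 2\lambda_2 + 5$, etc.), restricting to $\Omega_\alpha$ where $u_{N_0}^\delta$ coincides with $u_{\text{macro},N_0}^\delta$, hence with $\sum u_{n,q}^\delta \delta^{\lambda_n+q}$, and discarding the terms of order $>$ the desired truncation (which are $O(\delta^{\text{next power}})$ uniformly on $\Omega_\alpha$ since there $r^\pm \geq \alpha$ keeps the singular terms bounded), using $u_{1,0}=u_{1,1}=0$.

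I expect the main obstacle to be the \textbf{matching error in the crowns}: one must show rigorously that the truncated far and near field series, which are \emph{constructed} to agree only through a formal term-by-term identification, actually differ by a genuinely small quantity in $\Hone$ of the crown, uniformly in $\delta$. This requires controlling the remainders in all the asymptotic expansions \eqref{eq:block_decomposition_Sn}, \eqref{Expansionu00}, \eqref{ExpansionU10}, \eqref{eq:expansion_u_0_1_expansion}, \eqref{Expansionu20} simultaneously, keeping careful track of how many powers of $\ln\delta$ accumulate (the exponent $\kappa$), and of the interplay between the $r^\pm \sim \sqrt\delta$ scaling and the exponents $\lambda_n$; the bookkeeping is delicate but the structure is entirely analogous to Section~6 of Ref.~\cite{ResearchReportAKB} and Section~2.5 of Ref.~\cite{Jol-Sem-2008}, so I would largely import that machinery rather than redo it from scratch.
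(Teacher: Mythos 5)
Your proposal follows essentially the same route as the paper: construct the global approximation via the three truncated series and the overlapping cut-offs, split the residue into a modelling error (from the tails of the series inside each zone) and a matching error (supported in the $\sqrt\delta$-crowns and the transition strips, driven by commutators with the cut-offs), and invoke the uniform stability estimate \eqref{eq:estimate_e_by_residue} to pass from the residue bound \eqref{eq:prop_error_estimate} to the $\Hone$ bound \eqref{eq:lema:error_estimate_global_error}. The paper itself gives only this sketch and defers the bookkeeping to Proposition~6.3 of Ref.~\cite{ResearchReportAKB}, exactly the reference you invoke, so you are if anything more explicit than the text; the only mild caveat is that your quick heuristic for where the exponent $N_0/2-5/2$ comes from (attributing the $-5/2$ to the thin-layer/hole integration alone) is a bit loose, but the actual estimate you propose to carry out via the crown matching and weighted norms is the right one.
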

Finally, since $e_{N_0}^\delta$ coincides with $u^\delta - \sum_{(n,q) \in
  \IN_{N_0}} \delta^{\lambda_n+q} u_{n,q}^\delta$ in $\Omega_\alpha$
for $\delta$ small enough, Theorem~\ref{theo:error_estimate_optimal} follows
from \eqref{eq:lema:error_estimate_global_error} and the triangular inequality.

\section*{Acknowledgment}

The authors would like to thank Robert Gruhlke (TU Berlin) for helpful
discussions and the support of the implementation in Concepts related
to the project. They gratefully acknowledge the financial support by
the Einstein Foundation Berlin (grant number IPF-2011-98).

\begin{appendix}

\section{Proof of Proposition~\ref{prop:existence_uniqueness_u_delta}}\label{AppendixStability}
The variational formulation associated with~\eqref{eq:perturbed_Helmholtz} writes as follows: find $u^\delta \in \Hone(\Omega^\delta)$ such that, 
\begin{equation}\label{FormVar}
\forall v \in \Hone(\Omega^\delta), \quad \quad a^\delta( u^\delta , v ) = \langle \partial_n u_{\text{inc}}  - \imath k_0 u_{\text{inc}}, v \rangle_{H^{-1/2}(\Gamma_R^+), H^{1/2}(\Gamma_R^+)} 
\end{equation}
where 
\begin{equation*}
a^\delta( u , v )  = \int_{\Omega^\delta} \nabla u  \cdot \overline{\nabla v} \, dx  - \int_{\Omega^\delta} (k^\delta)^2 u \, \overline{ v} \, dx \; - \; \imath k_0 \int_{\Gamma_R}  u \overline{\nabla v}  dx, 
\end{equation*}  
and $\langle \cdot, \cdot \rangle_{H^{-1/2}(\Gamma_R^+), H^{1/2}(\Gamma_R^+)} $ stands for the duality pairing between $H^{-1/2}(\Gamma_R^+)$ and  $H^{1/2}(\Gamma_R^+)$ extending the $L^2(\Gamma_R^+)$ iner-product.  
It is easily seen that Problem~\eqref{eq:perturbed_Helmholtz} is a Fredholm-type problem (Theorem 6.6 in ~\cite{BrezisAnglais}). Let us prove that is has a unique solution. Assume that 
$$
a^\delta( u , v )   = 0  \quad \forall v \in \Hone(\Omega^\delta).
$$
Then, taking $v = u^\delta$ leads to $u^\delta = 0$ on $\Gamma_R^\pm$. Since $u^\delta = 0$ satisfies a Robin-type boundary condition on $\Gamma_R$, we deduce that $\partial_n u^\delta =0$ on $\Gamma_R^\pm$. It then follows from the unique continuation theorem that $u^\delta =0$ in the whole domain $\Omega^\delta$.\\

\noindent It remains  to prove the uniform stability estimate~\eqref{eq:prop_norm_u_norm_f}. The proof is by contradiction. If \eqref{eq:prop_norm_u_norm_f} does not hold, there exists a sequence $\delta_n$ going to $0$ as $n$ tends to $+ \infty$, and a sequence $u_n \in \Hone(\Omega^\delta)$ such that
 \begin{equation}
 \| u_n \|_{\Hone(\Omega^{\delta_n})} = 1 \quad \mbox{and} \quad  \forall v \in \Hone(\Omega^{{\delta_n}}),  \lim_{n\rightarrow+\infty}  a^{\delta_n}( u_n , v ) = 0.
 \end{equation} 
 First, we construct an extension of $\tilde{u}_n$ of $u_n$ belonging to $\Hone(\Omega)$ (see \eg example 1 in \cite{RauchTaylor})  that satisfies 
 $$
 1 \leq \| \tilde{u}_n \|_{\Hone(\Omega)} \leq C  \| u_n \|_{\Hone(\Omega^\delta)} \quad \mbox{and} \quad \quad  \tilde{u}_n = u_n \; \mbox{on} \; \Omega^{\delta_n}.
 $$  
 Then, for any $v \in \Hone(\Omega)$,  
 $$
  \lim_{n\rightarrow +\infty} a^{\delta_n}( \tilde{u}_n , v )  =  \lim_{n\rightarrow +\infty} \left( a^{\delta_n}( {u}_n , v )  +    \int_{\Omega_{\text{hole}}^{\delta_{n}}} \nabla \tilde{u}_{n} \cdot \overline{\nabla v} \, dx \; - \;  (k^\delta)^2   \tilde{u}_{n} \overline{v} \, dx  \right)= 0.
  $$ 
 Indeed, since the measure of  $\Omega_{\text{hole}}^{\delta_{n}}$ tends to $0$ as $\delta$ tends to $0$, for any $v \in \Hone(\Omega)$, $\dsp \lim_{n\rightarrow +\infty} \| v \|_{\Hone(\Omega_{\text{hole}}^{\delta_{n}})} =0$.
 
 \noindent Besides,  $\tilde{u}_n$ being bounded in $\Hone(\Omega)$, there exists a function $u_\ast \in \Hone(\Omega)$  such that, up to a subsequence, $\tilde{u}_n$ weakly tends to $u_\ast$  in $\Hone(\Omega)$ as $n$ tends to $+\infty$. As a result,
  $$
  \lim_{n\rightarrow +\infty} a^{\delta_n}( \tilde{u}_n , v )  =  \int_{\Omega} \nabla u_\ast \cdot \overline{\nabla v} \, dx  \; - \;  (k_0)^2 \int_{\Omega} u_\ast  \overline{ v} dx -i k_0 \int_{\Omega}   u_\ast  v ds = 0.
  $$ 
  Naturally, it implies that $u_\ast=0$.  In particular, it
  $\lim_{n\rightarrow\infty} \| \tilde{u}_n \|_{L^2(\Omega)} =0$, which in turn implies that 
  $
  \lim_{n\rightarrow\infty} \| \nabla \tilde{u}_n \|_{L^2(\Omega)} = 0,
  $
  and contradicts the fact that $ \| \tilde{u}_n \|_{\Hone(\Omega)} \geq 1$.

\section{Technical results for the near field singularities}
\label{appendix_singularities}

\subsection{The variational framework associated with the near field problems}
\label{sec:study-non-vari-2}

The near field terms $U_{n,q,\pm}^\delta$ satisfy 
  Laplace problems (see \eqref{NearFieldEquation})of the form
\begin{equation}\label{eq:nearfield_global}
  \left\lbrace\quad
    \begin{aligned}
      -\Delta U & = F \quad \mbox{in} \; \widehat{\Omega}^\pm,\\
      \partial_\bn U & = G \quad \mbox{on} \;  \partial \widehat{\Omega}^\pm.
    \end{aligned}
  \right. 
\end{equation}

As described in Section~{3.5} in \cite{CalozVial}, the standard
variational space to solve problem~\eqref{eq:nearfield_global} is
\begin{equation}
  \label{eq:classical_norm_NF}
  \mathfrak{V}(\widehat{\Omega}^\pm) = \left\{ v \in \Hone_{\text{loc}}(\widehat{\Omega}^\pm), \;
    \nabla v \in L^2(\widehat{\Omega}^\pm), \quad \frac{v}{{(1+R^\pm) \ln(2+R^\pm)}}
    \in  L^2(\widehat{\Omega}^\pm) \right\},
\end{equation}
which, equipped with the norm
\begin{equation*}
  \| v\|_{\mathfrak{V} (\widehat{\Omega}^\pm) } = \left( \left\|  {v}/{{(1+R^\pm) \ln(2+R^\pm)}} \right\|_{
      \Ltwo(\widehat{\Omega}^\pm)}^2 + \|  \nabla v\|_{
      \Ltwo(\widehat{\Omega}^\pm)}^2 \right)^{1/2}
\end{equation*}
is a Hilbert space.  Based on a variational formulation, we can prove
the following well-posedness result (see Proposition~3.22 and
Corollary~3.23 of Ref.~\cite{CalozVial} for the proof):
\begin{proposition}
  \label{PropositionProblemeChampProche}
  Assume that
  ${(1+R^\pm) \ln(2+R^\pm)} F \in \Ltwo(\widehat{\Omega}^\pm)$,
  ${(1+R^\pm)^{1/2} \ln(2+R^\pm)} G \in
  \Ltwo(\partial \widehat{\Omega}^\pm)$, {and the compatibility condition
    \begin{equation}
      \label{eq:CompatibilityProblemeChampProche}
      \int_{\widehat{\Omega}^\pm} F + \int_{\partial \widehat{\Omega}^\pm}
      G = 0
    \end{equation}
    is satisfied.}
  Then, problem~\eqref{eq:nearfield_global} has a solution
  $u \in \mathfrak{V}(\widehat{\Omega}^\pm)$, unique up to an additive constant.
\end{proposition}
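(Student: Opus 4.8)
The plan is to solve~\eqref{eq:nearfield_global} by the Lax--Milgram theorem in the quotient space $\mathfrak{V}(\widehat{\Omega}^\pm)/\mathbb{C}$, with bilinear form $a(u,v)=\int_{\widehat{\Omega}^\pm}\nabla u\cdot\overline{\nabla v}$ and linear form $\ell(v)=\int_{\widehat{\Omega}^\pm}F\,\overline v+\int_{\partial\widehat{\Omega}^\pm}G\,\overline v$. Continuity of $a$ on $\mathfrak{V}(\widehat{\Omega}^\pm)$ is immediate. Continuity of $\ell$ follows from two weighted Cauchy--Schwarz estimates: for the volume term, $|\int_{\widehat{\Omega}^\pm}F\,\overline v|\le\|(1+R^\pm)\ln(2+R^\pm)F\|_{\Ltwo(\widehat{\Omega}^\pm)}\,\|v/((1+R^\pm)\ln(2+R^\pm))\|_{\Ltwo(\widehat{\Omega}^\pm)}$, and for the boundary term one needs a weighted trace inequality $\|v/((1+R^\pm)^{1/2}\ln(2+R^\pm))\|_{\Ltwo(\partial\widehat{\Omega}^\pm)}\le C\|v\|_{\mathfrak{V}(\widehat{\Omega}^\pm)}$, proved by localisation — a standard trace theorem on each fixed periodicity cell and, on the two rays of the cone at distance $\sim R^\pm$ from the origin, a scaled trace inequality producing the right weight powers after summation. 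Since in dimension two $\int_{\widehat{\Omega}^\pm}(1+R^\pm)^{-2}(\ln(2+R^\pm))^{-2}\,dX<\infty$, constants belong to $\mathfrak{V}(\widehat{\Omega}^\pm)$, so $\ell$ descends to $\mathfrak{V}(\widehat{\Omega}^\pm)/\mathbb{C}$ if and only if $\ell(1)=\int_{\widehat{\Omega}^\pm}F+\int_{\partial\widehat{\Omega}^\pm}G=0$, which is exactly the compatibility condition~\eqref{eq:CompatibilityProblemeChampProche}.

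The heart of the proof is coercivity of $a$ on $\mathfrak{V}(\widehat{\Omega}^\pm)/\mathbb{C}$, i.e.\ the weighted Hardy--Poincar\'e inequality
\[
  \inf_{c\in\mathbb{C}}\;\left\|\frac{v-c}{(1+R^\pm)\ln(2+R^\pm)}\right\|_{\Ltwo(\widehat{\Omega}^\pm)}\;\le\;C\,\|\nabla v\|_{\Ltwo(\widehat{\Omega}^\pm)},\qquad v\in\mathfrak{V}(\widehat{\Omega}^\pm).
\]
I would deduce it from the analogous inequality on the hole-free cone $\mathcal{K}^\pm$, which is classical (it appears in the work of Caloz and Vial cited before the statement) and is itself obtained by passing to polar coordinates and invoking the one-dimensional Hardy inequality in the radial variable with the logarithmic weight. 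The passage from $\mathcal{K}^\pm$ to $\widehat{\Omega}^\pm$ is done through a uniform, \emph{local} extension operator $E\colon\mathfrak{V}(\widehat{\Omega}^\pm)\to\Honeloc(\mathcal{K}^\pm)$: since all obstacles have the fixed canonical shape $\widehat{\Omega}_{\hole}$ and are uniformly separated, one constructs $E$ cell by cell (each obstacle together with a fixed neighbourhood being a bounded Lipschitz domain) and glues the pieces; because the construction is local, $Ev$ near a given obstacle depends only on $v$ near that obstacle, and a Poincar\'e estimate on each fixed patch yields the gradient bound $\|\nabla(Ev)\|_{\Ltwo(\mathcal{K}^\pm)}\le C\|\nabla v\|_{\Ltwo(\widehat{\Omega}^\pm)}$ with $C$ independent of $v$. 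Applying the cone inequality to $Ev$ and restricting back to $\widehat{\Omega}^\pm$, where $Ev=v$, gives the claimed inequality.

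With continuity and coercivity in hand, Lax--Milgram yields a unique $u\in\mathfrak{V}(\widehat{\Omega}^\pm)/\mathbb{C}$ with $a(u,v)=\ell(v)$ for all $v$; integrating by parts first against smooth compactly supported test functions and then against test functions supported near $\partial\widehat{\Omega}^\pm$ shows that $u$ solves~\eqref{eq:nearfield_global} in the distributional sense, so the Neumann trace is recovered. Uniqueness up to an additive constant is clear: $a(u,u)=0$ forces $\nabla u\equiv0$, hence $u$ is constant on the connected domain $\widehat{\Omega}^\pm$, and constants do lie in $\mathfrak{V}(\widehat{\Omega}^\pm)$. The step I expect to be the main obstacle is precisely the weighted Hardy--Poincar\'e inequality on $\widehat{\Omega}^\pm$: the delicate point is the uniform control of the geometry near infinity, where infinitely many obstacles accumulate along one ray of the cone, which is exactly what the uniform local extension operator (equivalently, a uniform Poincar\'e constant on the dyadic annuli $\{2^j\le R^\pm\le2^{j+1}\}\cap\widehat{\Omega}^\pm$) is designed to handle.
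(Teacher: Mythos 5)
Your proposal is correct and follows exactly the variational route the paper invokes: the paper does not reproduce the argument but explicitly states that the result is proved ``based on a variational formulation'' and refers to Proposition~3.22 and Corollary~3.23 of Caloz--Costabel--Dauge--Vial, which use the same weighted Lax--Milgram framework resting on a logarithmically weighted Hardy--Poincar\'e inequality in the sector. Your identification of that inequality on the perforated cone $\widehat\Omega^\pm$ (handled via a uniform local extension across the periodic array of holes, or equivalently via uniform Poincar\'e constants on dyadic annuli) as the crux of the argument is accurate.
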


%% Non-classical

\subsection{Absence of logarithmic singularity}
\label{sec:absence-logar-sing}

As explained in Section~\ref{sec:constr-singularities}, we are interesting in building solutions to the homogeneous problem (\ie $F=G=0$) associated with~\eqref{eq:nearfield_global}  that blow up at infinity.  One natural question is to know if such a solution can blow up like $\ln R^\pm$ at infinity. The negative answer is given in the following Lemma:
\begin{lemma}
  \label{lema:no_singularity_alone}
The problem
  \begin{equation}\label{eq:nearfield_global_star}
    \left\lbrace\quad
      \begin{aligned}
        -\Delta U & = \Delta \mathcal{U}_{0,1,\pm} \quad \mbox{in} \; \widehat{\Omega}^\pm,\\
        \partial_\bn U & = - \partial_\bn \mathcal{U}_{0,1,\pm} \quad
        \mbox{on} \; \partial \widehat{\Omega}^\pm
      \end{aligned}
    \right. 
  \end{equation}
  has no solution in $\mathfrak{V}(\widehat{\Omega}^+)$. As a
  consequence, it is not possible to construct a solution to the homogeneous problem~\eqref{eq:nearfield_global} (\ie $F=G=0$)  that has a logarithmic blow up as $R^\pm$ tends toward infinity.
\end{lemma}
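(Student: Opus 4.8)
The plan is to argue by contradiction, relying on the integration-by-parts identity that underlies the compatibility condition~\eqref{eq:CompatibilityProblemeChampProche}. Suppose $U \in \mathfrak{V}(\widehat{\Omega}^+)$ solves~\eqref{eq:nearfield_global_star}, and set $W = U + \mathcal{U}_{0,1,+}$. Then $W$ satisfies the homogeneous problem $-\Delta W = 0$ in $\widehat{\Omega}^+$, $\partial_\bn W = 0$ on $\partial\widehat{\Omega}^+$, and by the block decomposition of $\mathcal{U}_{0,1,+}$ (its leading part is $\chi(R^+)\chi_{\text{macro},+}\ln R^+$ plus lower-order periodic correctors) the function $W$ behaves like $\ln R^+$ at infinity, while $U = W - \mathcal{U}_{0,1,+}$ stays in $\mathfrak{V}(\widehat{\Omega}^+)$, i.e. $U$ grows strictly slower than $(1+R^+)\ln(2+R^+)$. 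The strategy is to test the equation for $U$ (or $W$) against a suitable cut-off of the constant function and examine the flux through a large circle $\mathcal{C}_\rho = \{R^+ = \rho\}$ as $\rho \to \infty$.

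First I would write Green's formula for $U$ on the truncated domain $\widehat{\Omega}^+_\rho = \widehat{\Omega}^+ \cap \{R^+ < \rho\}$ against the test function $v \equiv 1$: this gives
\begin{equation*}
  \int_{\widehat{\Omega}^+_\rho} (-\Delta U) + \int_{\partial\widehat{\Omega}^+ \cap \{R^+<\rho\}} \partial_\bn U + \int_{\mathcal{C}_\rho} \partial_{R^+} U \, d\sigma = 0 .
\end{equation*}
Substituting the data from~\eqref{eq:nearfield_global_star}, the first two terms combine into $\int_{\widehat{\Omega}^+_\rho}\Delta\mathcal{U}_{0,1,+} - \int_{\partial\widehat{\Omega}^+\cap\{R^+<\rho\}}\partial_\bn\mathcal{U}_{0,1,+}$, which by the computation recalled in the proof of Proposition~\ref{prop:existence_uniqueness_NF_Sn} (namely $\int_{\widehat{\Omega}^\pm}\Delta\mathcal{U}_{0,1,\pm} - \int_{\partial\widehat{\Omega}^\pm}\partial_\bn\mathcal{U}_{0,1,\pm} = \Theta$) tends to $\Theta \neq 0$ as $\rho \to \infty$, since the residual $\Delta\mathcal{U}_{0,1,+}$ and $\partial_\bn\mathcal{U}_{0,1,+}$ decay superalgebraically. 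It therefore remains to show that $\int_{\mathcal{C}_\rho}\partial_{R^+}U\,d\sigma \to 0$ (or at any rate does not tend to $-\Theta$), which yields the contradiction $\Theta = 0$.

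The main obstacle is precisely this flux estimate: one must show that the circle-flux of $U$ vanishes in the limit even though $U$ itself may grow almost linearly (times a log). Here I would exploit that $W = U + \mathcal{U}_{0,1,+}$ is harmonic with homogeneous Neumann data, so that — away from the periodic layer, where the obstacles force only exponentially small corrections — a separation-of-variables / Mellin argument in the cone $\mathcal{K}^+$ of opening $\Theta$ shows that a harmonic Neumann function of subquadratic growth must be, up to the $\ln R^+$ and constant modes, a combination of genuine power modes $(R^+)^{\lambda_m}w_{m,0,+}$ with $\lambda_m > 0$; but each such mode has zero circle-average of its radial derivative for $m \geq 1$ (orthogonality of $w_{m,0,+} = \cos(\lambda_m\theta^+)$ against the constant on $I^+$), and the only mode contributing a nonzero flux is $\ln R^+$ itself, whose flux is exactly $\Theta$ and is carried by $\mathcal{U}_{0,1,+}$, not by $U$. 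Equivalently, one uses that $U \in \mathfrak{V}(\widehat{\Omega}^+)$ forces the coefficient of $\ln R^+$ in $W$ to be $1$ and no more, so the flux of $U$ through $\mathcal{C}_\rho$ converges to $0$. Making this rigorous is cleanest by invoking the weighted-space / asymptotic machinery of Nazarov~\cite{Nazarov205} already used for Proposition~\ref{prop:existence_uniqueness_NF_Sn}: the admissible asymptotic behaviours at infinity of homogeneous Neumann solutions are exhausted by the blocks $\mathcal{U}_{n,p,\pm}$ with $n \in \mathbb{Z}$ together with the logarithmic block, and membership in $\mathfrak{V}$ selects exactly the non-positive-growth blocks whose radial flux vanishes. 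The final sentence of the lemma (absence of a logarithmically growing solution of the homogeneous problem) is then immediate: such a solution would differ from $\mathcal{U}_{0,1,+}$ by an element of $\mathfrak{V}(\widehat{\Omega}^+)$ solving exactly~\eqref{eq:nearfield_global_star}, which we have just shown is impossible.
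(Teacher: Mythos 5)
Your overall strategy agrees with the paper's: show that the compatibility integral $\int_{\widehat\Omega^+}\Delta\mathcal U_{0,1,+}-\int_{\partial\widehat\Omega^+}\partial_\bn\mathcal U_{0,1,+}=\Theta\neq 0$ and conclude there is no solution in $\mathfrak V(\widehat\Omega^+)$, then reduce the absence of a logarithmic singularity to the first assertion by writing a hypothetical logarithmic solution as $\mathcal U_{0,1,+}$ plus an element of $\mathfrak V(\widehat\Omega^+)$; your last paragraph is essentially identical to the paper's. But the paper reaches the contradiction simply by \emph{citing} Proposition~\ref{PropositionProblemeChampProche} after verifying its hypotheses (namely $(1+R^+)\ln(2+R^+)\Delta\mathcal U_{0,1,+}\in\Ltwo(\widehat\Omega^+)$ and $(1+R^+)^{1/2}\ln(2+R^+)\partial_\bn\mathcal U_{0,1,+}\in\Ltwo(\partial\widehat\Omega^+)$, which you never check), whereas you attempt to re-prove from scratch the necessity of the compatibility condition, and that re-proof has a genuine gap.

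The unjustified step is $\int_{\mathcal C_\rho}\partial_{R^+}U\,d\sigma\to 0$. You route this through $W=U+\mathcal U_{0,1,+}$, invoke a modal decomposition of the harmonic Neumann function $W$ at infinity, and conclude that the only flux-carrying mode is $\ln R^+$, which "belongs to $\mathcal U_{0,1,+}$, not to $U$." This is circular: to pass from $W$'s decomposition to $U$'s flux you must already know the flux of $\mathcal U_{0,1,+}$ is $\Theta$, which is precisely the conclusion in view; and the modal structure of homogeneous Neumann solutions on the obstacle-perturbed cone is not a simple separation-of-variables fact — it is exactly Nazarov's asymptotic-block theory, i.e. machinery of at least the same depth as Proposition~\ref{PropositionProblemeChampProche} itself. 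If you wish to avoid citing that proposition, a far simpler direct argument is available using only $\nabla U\in\Ltwo(\widehat\Omega^+)$ (which is built into $\mathfrak V$): setting $f(\rho)=\int_{\mathcal C_\rho\cap\widehat\Omega^+}|\nabla U|^2\,d\sigma$, the co-area formula gives $\int_1^\infty f(\rho)\,d\rho<\infty$, so one may pick $\rho_k\to\infty$ with $\rho_k f(\rho_k)\to 0$, and Cauchy--Schwarz yields $\bigl|\int_{\mathcal C_{\rho_k}\cap\widehat\Omega^+}\partial_{R^+}U\,d\sigma\bigr|\leq(\Theta\rho_k)^{1/2}f(\rho_k)^{1/2}\to 0$, with no appeal to the structure of $W$ at all.
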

\begin{proof}
We first remark that $(1+R^\pm) \ln(2+R^\pm) \Delta \mathcal{U}_{0,1,\pm} \in
  \Ltwo(\widehat{\Omega}^\pm)$ and $(1+R^\pm)^{1/2} \ln(2+R^\pm) \partial_\bn  \mathcal{U}_{0,1,\pm} \in
  \Ltwo(\partial \widehat{\Omega}^\pm)$. Then, thanks to Proposition~\ref{PropositionProblemeChampProche}, if Problem~\eqref{eq:nearfield_global_star} has a solution in $\mathfrak{V}(\widehat{\Omega}^+)$, the right hand side of~\eqref{eq:nearfield_global_star} has to satisfy the compatibility condition~\eqref{eq:CompatibilityProblemeChampProche}. We shall see that this compatibility condition does not hold.\\
   
  \noindent Following the proof of Theorem~3.25 in
  Ref.~\cite{CalozVial}, we shall construct a sequence of domains
  $\widehat{\Omega}_k^+$ that tends to $\widehat{\Omega}^+$ as $k$
  tends to $+\infty$. To do so, we introduce $M_0 \in (0,1)$ such that
  the vertical segment $\{ -M_0 \} \times (-1,1)$ does not intersect
  the obstacle $\widehat{\Omega}_\hole$, and we consider the sequence
  $(M_k)_{k \in \IN^\ast}$ defined by $M_k = M_0 + k$. By
  construction, the vertical segment $\{ -M_k \} \times (-1,1)$ does
  not intersect any hole of the domain $\widehat{\Omega}^+$. Then, we
  define,
  \begin{equation} 
  \widehat{\Omega}_k^+ = \widehat{\Omega}^+ \cap \mathcal{B}(0,M_k) \quad \mbox{and} \quad \Gamma_k^+ = (\partial \widehat{\Omega}^+) \cap \mathcal{B}(0,M_k).
  \end{equation} 
  %and
  %$\Gamma_M^+ = (\partial \widehat{\Omega}^+) \cap \mathcal{B}(0,M)$.
 We have
   \begin{equation}
    \int_{\widehat{\Omega}^+} \Delta \mathcal{U}_{0,1,+} -
    \int_{\partial \widehat{\Omega}^+} \partial_\bn
    \mathcal{U}_{0,1,+} = \lim_{k \to \infty} \int_{\widehat{\Omega}^+_k} \Delta \mathcal{U}_{0,1,+} -
    \int_{\Gamma_k^+} \partial_\bn
    \mathcal{U}_{0,1,+}
  \end{equation}
  Applying the Green formula (to the first integral of the right hand side of the previous equality) gives
   % \label{eq:decomposition_partial_Omega_M_+}
   % \partial \widehat{\Omega}^+_M = \Gamma_M^+ \cup \mathcal{I}_M^+,
    %\qquad \mathcal{I}_M^+ = \big\lbrace 
    %\mX^+ \in \widehat{\Omega}^+, \lvert \mX^+ \rvert = M \big\rbrace,
  %\end{equation}
  %so that we can rewrite \eqref{eq:limit_integrals} as
  \begin{equation}
    \label{eq:limit_integrals_2}
    \int_{\widehat{\Omega}^+} \Delta \mathcal{U}_{0,1,+} -
    \int_{\partial \widehat{\Omega}^+} \partial_\bn
    \mathcal{U}_{0,1,+} = \lim_{k \to \infty}
    \int_{0}^{\Theta} \partial_R^\pm \mathcal{U}_{0,1,+}(M_k, \theta^+) M_k d \theta^+.
   %\quad \mathcal{I}_M^+ = \big\lbrace 
    %\mX^+ \in \widehat{\Omega}^+, \lvert \mX^+ \rvert =M \big\rbrace
  \end{equation}
  %\bdsn{ \\Adrien, je ne comprends pas pourquoi tu as besoin de ces arguments. IL me semble que la $I$+ ne croise pas les %trous ? pourquoi je me trompe ? \\} 
%  \bdsn{--------------------------------------------------------------------------------------------------------------------------------\\}
 % We can parametrize the integral on $I_M^+$ by the angular variable
 % $\theta^+$, and we call $I_M^+$ its domain of validity. We can
 % easily remark that the length $\lvert I_M^+ \rvert$ of $I_M^+$ satisfies
 % \begin{equation*}
 %   \Theta - 2 \sin^{-1} \frac{2}{M} \leq \lvert I_M^+ \rvert \leq \Theta
  %\end{equation*}
 % and tends to $\Theta$, as $M$ tends to infinity. 
 % \bdsn{\\-------------------------------------------------------------------------------------------------------------------------------\\}
  %Then
  %\begin{equation}
   % \label{eq:limit_integrals_3}
  %  \int_{\mathcal{I}_M^+} \partial_\bn
   % \mathcal{U}_{\star,1,+}
  %  = \int_{I_M^+} \partial_{R^+} \mathcal{U}_{\star,1,+}(M,\theta^+)\,
  %  M d\theta^+ 
 %\end{equation}
  But,  for large $R^\pm$, 
  \begin{equation}
    \nonumber
   \mathcal{U}_{0,1,+}(M,\theta^+) = \ln R^\pm +
   \chi_{\text{macro},+}(X_1^+, X_2^+)  \frac{1}{R^\pm} w_{0,1,+}(\ln
   R^\pm) + \chi_-(X_1^+) \lvert X_1^+
  \rvert^{-1} p_{0,1,+}(\ln \lvert X_1^+ \rvert, X_1^+,
  X_2^+) 
  \end{equation}
  where
  \begin{equation}
    \nonumber
  p_{0,1,+} (\ln \lvert X_1^+ \rvert, X_1^+,
  X_2^+) =g_{0,0,1,+}^{\mathfrak{n}}(\ln \lvert X_1^+ \rvert) \;
  W_1^{\mathfrak{n}}\left(X_1^+, X_2^+
  \right) + \sum_{p=0}^1 \, \;
g_{0,1-p,p,+}^{\mathfrak{t}}(\ln \lvert X_1^+ \rvert)\;
  W_p^{\mathfrak{t}} \left(X_1^+,  X_2^+ \right),
  \end{equation}
 $g_{0,1-p,p,+}^{\mathfrak{t}}$ and $g_{0,0,1,+}^{\mathfrak{n}}$ having a polynomial dependence with respect to $\ln |X_1^+|$. Then, a direct computation shows that 
  \begin{equation*}
    \partial_{R^+} \mathcal{U}_{0,1,+}(R^+,\theta^+) = \frac{1}{M} + O\left(
      \frac{\ln R^+}{(R^+)^2} \right), \quad \text{uniformly w.r.t }\theta^+.
  \end{equation*}
Consequently, taking the limit of the  integral in \eqref{eq:limit_integrals_2} gives
  \begin{equation}
    \label{eq:limit_integrals_4}
    \int_{\widehat{\Omega}^+} \Delta \mathcal{U}_{0,1,+} -
    \int_{\partial \widehat{\Omega}^+} \partial_\bn
    \mathcal{U}_{0,1,+} = \Theta \neq 0,
  \end{equation}
  which means that compatibility condition
  \eqref{eq:CompatibilityProblemeChampProche} is not satisfied. \\
  
\noindent  Finally, the absence of logarithmic singularity is proved
by contradiction: assume that such a function exists. We denote it by
$S_{\text{log}}$. Then, in view of Theorem~4.1 of Ref.~\cite{Nazarov205},  $S_{\text{log}}$ can be decomposed as $S_{\text{log}} = \mathcal{U}_{0,1,+}+ \hat{S}_{\text{log}}$,  $\hat{S}_{\text{log}}$ being in  $\mathfrak{V}(\widehat{\Omega}^+)$. Noticing that $\hat{S}_{\text{log}}$ satisfies Problem~\eqref{eq:nearfield_global_star} that has no solution in $\mathfrak{V}(\widehat{\Omega}^+)$, we obtain a contradiction.\end{proof} 

 \begin{lemma}
    \label{lema:compatibility_condition_Unp}
    Let $n \in \IZ^*$ and $p(n)=\max(1,1 + \lceil \lambda_n \rceil)$. If $n < 0$ or $\lambda_n \not\in \IN$, then
    \begin{equation}
      \label{eq:compatibility_condition_Unp}
      \int_{\widehat{\Omega}^+} \Delta \mathcal{U}_{n,p(n),\pm} -
      \int_{\partial \widehat{\Omega}^+} \partial_\bn
      \mathcal{U}_{n,p(n),\pm} = 0
    \end{equation}
  \end{lemma}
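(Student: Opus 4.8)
The plan is to mimic the computation carried out in the proof of Lemma~\ref{lema:no_singularity_alone}, but now tracking the leading behaviour of $\mathcal{U}_{n,p(n),\pm}$ at infinity instead of that of $\mathcal{U}_{0,1,\pm}$. First I would exploit the exhaustion $\widehat{\Omega}_k^\pm = \widehat{\Omega}^\pm \cap \mathcal{B}(0,M_k)$ and $\Gamma_k^\pm = (\partial\widehat{\Omega}^\pm)\cap\mathcal{B}(0,M_k)$ already introduced there, with $M_k = M_0 + k$ chosen so that the vertical segments $\{-M_k\}\times(-1,1)$ avoid every hole; one then has to check that $\Delta\mathcal{U}_{n,p(n),\pm}$ and $\partial_\bn\mathcal{U}_{n,p(n),\pm}$ are absolutely integrable over $\widehat{\Omega}^\pm$ and $\partial\widehat{\Omega}^\pm$ respectively, which follows from the decay estimate $\Delta\mathcal{U}_{n,p(n),\pm} = o((R^\pm)^{\lambda_n - p(n) - 1 + \varepsilon})$ stated in Section~\ref{sec:sing-block}: since $p(n) = \max(1, 1+\lceil\lambda_n\rceil)$, the exponent $\lambda_n - p(n) - 1$ is strictly below $-2$ when $\lambda_n \notin \IN$ (and below $-1$, hence the boundary integral converges too), so for small enough $\varepsilon$ the integrand is $o((R^\pm)^{-2-\epsilon'})$ in the plane, giving absolute convergence. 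The same estimates handle the case $n<0$, where $\lambda_n<0$ and $p(n)=1$, so $\lambda_n - p(n) - 1 = \lambda_n - 2 < -2$.

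Next, by the Green formula applied on $\widehat{\Omega}_k^\pm$ (exactly as in \eqref{eq:limit_integrals_2}), the left-hand side of \eqref{eq:compatibility_condition_Unp} equals
\begin{equation*}
  \lim_{k\to\infty} \int_{I^\pm} \partial_{R^\pm}\mathcal{U}_{n,p(n),\pm}(M_k,\theta^\pm)\, M_k\, d\theta^\pm,
\end{equation*}
so it suffices to show this limit is zero. For this I would insert the ansatz~\eqref{eq:ansatz_U_star_p}: away from the layer the cut-off $\chi_{\mp}(X_1^\pm)$-term is exponentially small, and the surviving contribution is $\chi_{\text{macro},\pm}(R^\pm)^{\lambda_n-q}w_{n,q,\pm}(\ln R^\pm,\theta^\pm)$ summed over $0\le q\le p(n)$. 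Taking $\partial_{R^\pm}$ of the $q$-th term and multiplying by $M_k$ yields a quantity of size $O((M_k)^{\lambda_n-q})$ up to logarithmic factors; integrating over $\theta^\pm \in I^\pm$ and letting $k\to\infty$, every term with $\lambda_n - q < 0$ vanishes in the limit. The only possible obstruction is a term with $\lambda_n - q \ge 0$, i.e. $q \le \lambda_n$, whose radial derivative scaled by $M_k$ does \emph{not} tend to zero — in particular the borderline $q$ with $\lambda_n - q = 0$, which would produce a $\ln$-type constant contribution. This is precisely why the statement restricts to $\lambda_n\notin\IN$ (so $q=\lambda_n$ never occurs among integer $q$) and to $n<0$ (so $\lambda_n<0$ and no nonnegative exponent ever arises): under either hypothesis every $q\in\{0,\dots,p(n)\}$ satisfies $\lambda_n - q < 0$ strictly, so each term's contribution decays and the limit is $0$.

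The main obstacle I anticipate is the careful bookkeeping of the boundary-integral piece $\int_{\partial\widehat{\Omega}^\pm}\partial_\bn\mathcal{U}_{n,p(n),\pm}$ and making rigorous the passage to the limit: one must confirm that the flux of $\mathcal{U}_{n,p(n),\pm}$ through the lateral vertical cuts $\{-M_k\}\times(-1,1)$ (which appear when one applies Green's formula to the \emph{truncated} domain $\widehat{\Omega}_k^\pm$ rather than to an annular region) is negligible, using the $1$-periodicity and exponential $X_2^\pm$-decay of the profile functions $p_{n,q,\pm}$ from Appendix~\ref{sec:defin-prof-funct-wstar}; this is the analogue of the estimate $\partial_{R^+}\mathcal{U}_{0,1,+} = 1/M + O(\ln R^+/(R^+)^2)$ in Lemma~\ref{lema:no_singularity_alone}, and it is where the cut-off functions $\chi_{\text{macro},\pm}$, $\chi_\mp$, $\chi(R^\pm)$ require a short but attentive argument. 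Once that flux-through-the-cuts term is shown to vanish and the absolute-integrability estimates above are in place, \eqref{eq:compatibility_condition_Unp} follows directly, and this is exactly the compatibility condition invoked in the proof of Proposition~\ref{prop:existence_uniqueness_NF_Sn} to justify taking $C_n^\pm = 0$ when $\lambda_n\notin\IN$.
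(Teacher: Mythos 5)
There is a genuine gap at the heart of your argument. You claim that the hypotheses ($n<0$, or $\lambda_n\notin\IN$) force $\lambda_n-q<0$ for every $q\in\{0,\dots,p(n)\}$, and that each term in the sum therefore decays as $M_k\to\infty$. That is correct for $n<0$ (where $\lambda_n<0$), but it is false for $n>0$ with $\lambda_n\notin\IN$: take $\lambda_n=2.5$ and $q=0,1,2$ to see exponents $2.5,1.5,0.5>0$. Those terms do \emph{not} decay; in fact they blow up. The hypothesis $\lambda_n\notin\IN$ is not there to exclude non-negative exponents $\lambda_n-q$, but only to exclude the \emph{zero} exponent $\lambda_n-q=0$. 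The paper's argument is more delicate: after establishing (via Lemma~\ref{lema:calculInfernal1} and Lemma~\ref{lema:calculInfernal2}) that $J_k^n=\sum_{m\le\lfloor\lambda_n\rfloor}\sum_\ell C_{m\ell}M_k^{\lambda_n-m}(\ln M_k)^\ell+o(1)$, it notes that the limit of $J_k^n$ is known a priori to be finite (from the summability of $\Delta\mathcal{U}_{n,p(n),\pm}$ and $\partial_\bn\mathcal{U}_{n,p(n),\pm}$), which forces the coefficients of every growing term to vanish; and since $\lambda_n-m\ne 0$, there is no surviving constant term. Your plan stops at "the growing terms cannot appear," which is untrue — they cancel, they are not absent.

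A second, related inaccuracy: you dismiss the $\chi_\mp(X_1^\pm)$-part of the ansatz as "exponentially small." It is not. The profile functions $p_{n,q,\pm}$ decay exponentially only in $X_2^\pm$; they are periodic in $X_1^\pm$, so on the portion of the boundary that runs close to the layer (small $|X_2^\pm|$, large $|X_1^\pm|$) the term $\chi_\mp(X_1^\pm)\lvert X_1^\pm\rvert^{\lambda_n-q}p_{n,q,\pm}$ is of algebraic size $\lvert X_1^\pm\rvert^{\lambda_n-q}$, comparable to the macroscopic part. This is precisely why the paper does not exhaust $\widehat{\Omega}^\pm$ by the disks $\mathcal{B}(0,M_k)$ used in Lemma~\ref{lema:no_singularity_alone}, but instead by a domain whose boundary $I_k$ mixes a circular arc with a vertical segment $\{-M_k\}\times(-2,2)$ passing between two holes: the entire content of Lemma~\ref{lema:calculInfernal2} is the careful accounting of the layer's contribution on that boundary. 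Your sketch acknowledges the flux-through-the-cuts issue as an "obstacle," but the resolution is not that the layer terms are negligible — they contribute to the same algebraic sum, and the finite-limit argument above is needed to kill them.
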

  \begin{proof}
   As in  the proof of
    Lemma~\ref{lema:no_singularity_alone}, we will define a domain
    $\widehat{\Omega}^{\pm}_k$ such that
    $\lim_{k \to \infty} \widehat{\Omega}^{\pm}_k =
    \widehat{\Omega}^{\pm}$. As previously, we consider $M_0 \in (0,1)$ such that the
    vertical segment $\{ -M_0 \} \times (-1,1)$ does not intersect the
    obstacle $\widehat{\Omega}_\hole$, and we consider the sequence
    $M_k$, $k \in \IN^\ast$, by $M_k = M_0 + k$. By construction, the
    vertical segment $\{ -M_k \} \times (-1,1)$ does not intersect 
    any hole of the domain $\widehat{\Omega}^+$. We define the
    boundary $I_k$, 
 %   \begin{equation}
%{I}_{M_k} =  I_k^c  \cup I_k^p \quad I_k^c  =  \{ (- M_k ,X_2^+) \in \R^2, -2 < X_2 < 2 \} \quad  I_k^p = \{ (R_k^+ \cos(\theta^+), R_k^+ \sin(\theta^+)) \in \R^2,  R_k^+ = \sqrt{M_k^2+4},  \theta^+ 
%\end{equation}

\begin{equation*}
I_k = \lbrace (R^+_{k}(\theta^+) \cos \theta^+,R^+_{k}(\theta^+)  \sin \theta^+) \in
      \widehat{\Omega}^+ , 0 < \theta^+ < \Theta \rbrace, 
    \end{equation*}
    where the function $R^+_{k}(\theta^+)$ is given by
    \begin{equation*}
      R^+_{k}(\theta^+) =
      \begin{cases}
       - M_k / \cos \theta^+, & \quad  \lvert \theta^+ - \pi \rvert
        \leq \theta_k, \\
         \sqrt{M_k^2+4}, & \quad \text{otherwise.}
      \end{cases}  \quad \theta_k = \sin^{-1} ( 2 / \sqrt{M_k^2+4} ).
    \end{equation*}
     For $\lvert \theta^+ - \pi \rvert
        \geq \theta_k$, ${I}_{k} $ coincides with a portion of the circle of radius $ \sqrt{M_k^2+4}$ and of center $(0,0)$ while for $\lvert \theta^+ - \pi \rvert \geq \theta)k$, ${I}_{k} $ coincides with the segment $\{ (-M_k, X_2^+) -2 \leq X_2^+ \leq 2\}$. 
   % Then,  let $\widehat{\Omega}^{\pm}_{M_k}$ be the domain defined by
    %\begin{equation*}
     % \widehat{\Omega}^{\pm}_{M_k} = \lbrace (R^+,\theta^+) \in
     % \widehat{\Omega}^+ , 0 < \theta^+ < \Theta , 0 < R^+ <
     % R^+_{M_k}(\theta^+) \rbrace.
    %\end{equation*}
        Again, analogously to the proof of Lemma~\ref{lema:no_singularity_alone}, we have,
    \begin{equation}
      \label{eq:limit_integrals_Unp}
      \int_{\widehat{\Omega}^+} \Delta \mathcal{U}_{n,p(n),+} -
      \int_{\partial \widehat{\Omega}^+} \partial_\bn
      \mathcal{U}_{n,p(n),+} = \lim_{k \to \infty} J_k^n \quad  J_k^n = \int_{{I}_{k}} \partial_\bn
      \mathcal{U}_{n,p(n),+} d\sigma
      \end{equation}
    where, since
    $(1+R^+) \ln(2+R^+) \Delta \mathcal{U}_{n,p(n),+} \in
    \Ltwo(\widehat{\Omega}^+)$ and $(1+R^+)^{1/2} \ln(2+R^+) \partial_\bn \mathcal{U}_{n,p(n),+}  \in
  \Ltwo(\partial \widehat{\Omega}^+)$
    the limit of $J_k^n$ is finite.  But,  applying Lemma~\ref{lema:calculInfernal1} and  Lemma~\ref{lema:calculInfernal2} below, we can prove
   \begin{equation}
{J}_k^n = \sum_{m=0}^{\lfloor \lambda_n \rfloor} \sum_{\ell=0}^{L}
      C_{m\ell} M_k^{\lambda_n - m} (\ln M_k)^\ell + o(1).
  \end{equation}
If $n<0$, we immediately deduce that ${J}_k^n$ tends to $0$ as $k$ tends toward infinity.  For $n>0$, since $\lambda_n \notin \N$, $\lambda_n -m \neq 0$. But, since the limit is finite, the coefficients $C_{m\ell}$ have to vanish and we conclude that $\dsp \lim_{k \rightarrow \infty}{J}_k^n =0$. 
   \end{proof}
   \begin{lemma}\label{lema:calculInfernal1}
     For any $(n,p)\in \Z \times \R$, there exists a sequence 
     $(C_{n,p,t,q})_{t\in \N, q\in \N, q\leq p}$ such that, for any $s \in \N$, 
     \begin{multline}
       \label{eq:integration_Ik_wnp}
       \int_{{I}_{k}^+} \partial_\bn (R^+)^{\lambda_n-p}
       w_{n,p,+}(\ln R^+,\theta^+) \chi_{\text{macro},+}(X_1^+,X_2^+) d\sigma(\mX)
       \\ = \sum_{t=0}^s \sum_{q=0}^p
       C_{n,p,t,q} (M_k)^{\lambda_n-p-t}
       (\ln M_k)^q +  o\big( (M_k)^{\lambda_n-p-s} \big).
     \end{multline}
   \end{lemma}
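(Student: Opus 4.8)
The plan is to compute the boundary integral in \eqref{eq:integration_Ik_wnp} directly, by splitting the contour ${I}_k^+$ into the (almost-)vertical segment near $\theta^+\approx\pi$ and the circular arc for the remaining angles, and then Taylor-expanding the integrand in powers of $1/M_k$ and $\ln M_k$. Recall that $w_{n,p,+}(\ln R^+,\theta^+)$ is, by its construction in the appendix, a polynomial in $\ln R^+$ whose coefficients are smooth functions of $\theta^+$ (with $w_{n,0,+}=\cos(\lambda_n\theta^+)$), so that $(R^+)^{\lambda_n-p}w_{n,p,+}(\ln R^+,\theta^+)$ is a finite sum of terms of the shape $(R^+)^{\lambda_n-p}(\ln R^+)^q a_{q}(\theta^+)$ with $q\le p$. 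Since $\chi_{\text{macro},+}\equiv 1$ away from the layer (in particular on the whole circular arc and on the part of the vertical segment with $|X_2^+|$ bounded away from where $\chi(X_2^+)$ is active, the cut-off contributes only lower-order corrections that will be absorbed into the $o(\cdot)$ term once $M_k\to\infty$), on the bulk of ${I}_k^+$ we are simply integrating the normal derivative of such explicit functions over a near-circle of radius $\sim M_k$.

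First I would treat the arc piece: parametrising by $\theta^+$, $d\sigma = R^+\,d\theta^+$ with $R^+=\sqrt{M_k^2+4}$ constant there, and $\partial_\bn = \partial_{R^+}$, so $\partial_{R^+}\big((R^+)^{\lambda_n-p}(\ln R^+)^q a_q(\theta^+)\big)$ produces terms $(R^+)^{\lambda_n-p-1}\big((\lambda_n-p)(\ln R^+)^q + q(\ln R^+)^{q-1}\big)a_q(\theta^+)$; multiplying by $d\sigma=R^+ d\theta^+$ and integrating over the relevant $\theta^+$-range gives exactly contributions of the form $(M_k)^{\lambda_n-p}(\ln M_k)^{q'}$ times a constant (the $\theta^+$-integral of $a_q$), after expanding $\sqrt{M_k^2+4}=M_k(1+O(M_k^{-2}))$ and its logarithm in the usual way; the correction from replacing $\sqrt{M_k^2+4}$ by $M_k$ is $O((M_k)^{\lambda_n-p-2}(\ln M_k)^q)$, which is of the announced form with $t=2$ or lands in the remainder. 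On the vertical-segment piece, $X_1^+=-M_k$ is fixed, $\partial_\bn=\pm\partial_{X_1^+}$, and $R^+=\sqrt{M_k^2+(X_2^+)^2}$, $\theta^+=\pi-\arctan(X_2^+/M_k)$; substituting these and Taylor-expanding in $X_2^+/M_k$ (valid since $|X_2^+|$ is bounded on this segment) again yields a finite asymptotic series in $(M_k)^{\lambda_n-p-t}(\ln M_k)^q$ after integrating the bounded variable $X_2^+$ over a fixed range, with the tail controlled by the decay implicit in $p\le$ the polynomial degree.

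Then I would collect the two pieces, observe that all terms produced have the form $C_{n,p,t,q}(M_k)^{\lambda_n-p-t}(\ln M_k)^q$ with $t\in\N$ and $0\le q\le p$, truncate the expansion at order $t=s$ for an arbitrary given $s\in\N$, and gather everything beyond into $o\big((M_k)^{\lambda_n-p-s}\big)$, which is legitimate because each discarded term is either $(M_k)^{\lambda_n-p-t}(\ln M_k)^q$ with $t>s$ (hence $o((M_k)^{\lambda_n-p-s})$ since logarithms are dominated by any positive power) or comes from the $\chi_{\text{macro},+}\neq 1$ region which is uniformly bounded in $X_1^+$ and contributes at an exponentially-or-polynomially-suppressed order. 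The constants $C_{n,p,t,q}$ are read off from the $\theta^+$- and $X_2^+$-integrals of the coefficient functions and are independent of $k$, as required.

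The main obstacle I anticipate is bookkeeping rather than anything deep: one must carefully track how the polynomial-in-$\ln R^+$ structure of $w_{n,p,+}$ interacts with differentiation in $R^+$ (which lowers the $\ln$-degree by one in addition to producing the $\lambda_n-p$ factor) and how the change of variables on the vertical segment mixes $R^+$, $\theta^+$ and $X_2^+$; in particular one should check that no term with $q>p$ is ever generated and that the index $t$ indeed ranges over $\N$ starting at $t=0$. A secondary point of care is justifying that the contribution of the cut-off $\chi_{\text{macro},+}$ near the layer, together with the periodic-decaying pieces $p_{n,q,+}$ that $\mathcal{U}_{n,p,+}$ also contains but which are \emph{not} part of the $w$-term treated in this lemma, does not spoil the stated expansion — but these are handled in the companion Lemma~\ref{lema:calculInfernal2}, so here it suffices to remark that on ${I}_k^+$ the $w$-term is multiplied by $\chi_{\text{macro},+}$ which differs from $1$ only on a set where $X_1^+$ is $O(1)$, producing corrections negligible against every $(M_k)^{\lambda_n-p-t}$.
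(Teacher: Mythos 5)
Your overall strategy matches the paper's: split $I_k^+$ into the circular arc $I_1$ and the vertical segment $I_2$ at $X_1^+=-M_k$, compute the normal derivative of the explicit $(R^+)^{\lambda_n-p}\,(\ln R^+)^q\,a_q(\theta^+)$-terms, and Taylor-expand everything in powers of $1/M_k$ and $\ln M_k$. Your treatment of the vertical segment is essentially the paper's: expand $R^+=M_k\sqrt{1+(X_2^+)^2/M_k^2}$ and $\theta^+=\pi-\arctan(X_2^+/M_k)$ in $X_2^+/M_k$ and integrate exactly over the fixed range where $\chi(X_2^+)$ is active.

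There is, however, a genuine gap in your treatment of the arc. You write that ``integrating over the relevant $\theta^+$-range gives exactly contributions of the form $(M_k)^{\lambda_n-p}(\ln M_k)^{q'}$ times a constant (the $\theta^+$-integral of $a_q$)'', and the only subleading correction you then account for is the $O(M_k^{-2})$ correction from replacing $\sqrt{M_k^2+4}$ by $M_k$. But the angular range of the arc is itself $k$-dependent: $\theta^+\in(0,\pi-\theta_k)\cup(\pi+\theta_k,\Theta)$ with $\theta_k=\sin^{-1}\bigl(2/\sqrt{M_k^2+4}\bigr)=O(1/M_k)$. The $\theta^+$-integral of $a_q$ is therefore not a constant; it depends on $k$ through its endpoints, and the resulting corrections enter already at order $(M_k)^{\lambda_n-p-1}$, i.e.\ $t=1$, which is one order earlier than the correction you do account for. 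To make your argument work you need, as the paper does, to introduce primitives $V_{n,p,q,+}$ of the angular coefficients (taken separately on $(0,\pi)$ and $(\pi,\Theta)$ since $w_{n,q,s,+}$ may jump at $\theta^+=\pi$), Taylor-expand $V_{n,p,q,+}(\pi\mp\theta_k)$ at $\pi^\pm$, and then expand $\theta_k$, $R_k$ and $\ln R_k$ in asymptotic series in $1/M_k$ with $\ln M_k$-coefficients. Without this step, your bookkeeping produces a claimed remainder $o\bigl((M_k)^{\lambda_n-p-s}\bigr)$ that is not actually justified for $s\geq1$, because the $t=1,2,\dots,s$ coefficients from the endpoint expansion are missing from your sum.
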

   \begin{proof}
     We decompose $I_k^+$ into its circular part 
     \begin{equation}\label{definitionI1}
     {I}_{1}= \left\{ \left(R_k \cos \theta^+, R_k\sin \theta^+ \right) \in \R^2,  \theta^+ \in (0, \pi -\theta_k) \cup  ( \pi +\theta_k, \Theta) \right\},
     \end{equation}
     $R_k  = \sqrt{M_k^2 + 4}$, and its straight part 
     \begin{equation}\label{definitionI2}
     {I}_{2}  = \left\{ (- M_k, X_2^+) \in \R^2, X_2^+ \in (-2, 2) \right\},
     \end{equation}
     and we study the integral over these two parts separately.

     \textbf{Integration over ${I}_{1}$:} On this part, the normal derivative
     is $\partial_\bn = \partial_{R^+}$ and 
     $\chi_{\macro,+}=1$. Using the explicit form~\eqref{eq:definition_w_star_p} of the 
     function $w_{n,p,+}(\ln R^+, \theta^+)$, we see that
     \begin{multline}
       \nonumber
       J_1 = \int_{{I}_{k,c}} \partial_\bn \left\{  (R^+)^{\lambda_n-p}
         w_{n,p,+}(\ln R^+,\theta) \chi_{\text{macro},+} \right\}
       d\sigma \\ = (R_k^+)^{\lambda_n-p}   \sum_{q=0}^p    ( \ln {R_k^+})^q   \int_{|\theta^+ - \pi| \geq \theta_k}
       v_{n,p,q,+}(\theta^+) d\theta^+.          
     \end{multline}
     where  the functions $v_{n,p,q,+}$ are smooth on the intervals $(0, \pi -\theta_k) $   and $( \pi -\theta_k, \Theta)$.  On $(0, \pi -\theta_k)$ (resp.  $( \pi -\theta_k, \Theta)$), we denote by $V_{n,p,q,+}$ the primitive of  $v_{n,p,q,+}$ that vanishes at $0$ (resp. $\Theta$).  The function $V_{n,p,q,+}$ is smooth on both $(0, \pi -\theta_k) $ and $( \pi -\theta_k, \Theta)$.
     \begin{equation}
       J_1  =    (R_k^+)^{\lambda_n-p}   \sum_{q=0}^p    ( \ln {R_k^+})^q   (V_{n,p,q,+}(\pi -\theta_k) - V_{n,p,q,+}(\pi +\theta_k))
     \end{equation}
     Then, we use Taylor expansion of $V_{n,p,q,+}$ at the point $\theta = \pi^\pm$ ($V_{n,p,q,+}$ is not continuous at $\pi$)  
     $$
     V_{n,p,q,+}(\pi -\theta_k) = \sum_{r=0}^N  \frac{V_{n,p,q,+}^{(r)}(\pi^+)}{r!}  \left(\theta_k\right)^r +o((\theta_k)^N)
     $$ 
     and  the following expansions to conclude:
     \begin{align*}
       &  \forall s \in \R, \; \exists \, ( \alpha_{i,s})_{i\in \N}, \; \forall N\in \N, \quad     R_k^s =   \sum_{i=0}^N   M_k^{s-i}  \alpha_{i,s}  + o((M_k)^{s-N}),  \\
       &  \forall m \in \R, \; \exists \, ( \beta_{i,m,\ell})_{i\in
         \N, \ell\in\N, \ell\leq m}, \; \forall N \in \N,  \\ &\quad  (\ln R_k)^m =\sum_{i=0}^N \sum_{\ell=0}^m (\ln M_k)^\ell \beta_{i,m,\ell} M_k^{-i}  + o((M_k)^{-N}), \\
       & \forall m \in \R,  \; \exists \, ( \gamma_{i,m})_{i\in \N},  \; \forall N \in \N, \quad   (\theta_k)^m = \sum_{i=0}^N \gamma_{i,m} M_k^{-i} + o((M_k)^{-N}).
     \end{align*}
     
     \textbf{Integration over $I_2$:}
     On this part,  $\partial_\bn = - \partial_{X_1}^+$, and $\chi_\macro(\bX^+)  = \chi(X_2)$. Then,
     \begin{equation}
       \nonumber
       J_2 = \int_{(-2,-1)\cup(1,2)} - \partial_{X_1^+}
       \mathtt{v}_{n,p}(\bX)      \chi(X_2^+) dX_2^+,  \quad
       \mbox{with } \mathtt{v}_{n,p}(\bX)  =   (R^+)^{\lambda_n-p} \sum_{q=0}^p \big( \ln R^+ \big)^q
       w_{n,p,q,+}(\theta^+) 
     \end{equation}
     We remind that $
     \mathtt{v}_{n,p}$ is harmonic in both
     $\OmegaTop$ and $\OmegaBottom$.
     We shall compute the integral over $(1,2)$,
     the computation of the integral over $(-2,-1)$
     being similar. First, since $\partial_{X_1^+}
     \mathtt{v}_{n,p}(\bX^+) = \cos \theta^+ \partial_{R^+}
     \mathtt{v}_{n,p}(R^+,\theta^+) - \frac{1}{R^+}
     \sin(\theta^+) \partial_{\theta^+} \mathtt{v}_{n,p}(R^+,
     \theta^+)$, there exists smooth functions $v_{n,p,q,+}$ such that
     \begin{equation}\label{dX1vnp}
       \partial_{X_1^+} \mathtt{v}_{n,p}(\bX^+) = (R^+)^{\lambda_n-p-1} \sum_{q=0}^p \big( \ln R^+ \big)^q  v_{n,p,q,+}(\theta^+). 
     \end{equation}
     Since $R^+ = M_k \sqrt{1 + \frac{X_2^2}{M_k^2}}$ and $\theta^+ = \tan^{-1} \left( \frac{X_2^+}{X_1^+} \right)$ to obtain the following asymptotic formula (reminding that $X_2$ is bounded):
     \begin{align*}
       & \forall  s \in \Z, \exists (\tilde{\alpha}_{i,s})_{i\in \N}, \forall N \in \N, (R^+)^s = \sum_{i=0}^N \tilde{\alpha}_{i,s} (X_2^+)^{2i}  \, (M_k)^{s-2i}+ o((M_k)^{s-2N}) \\
       &   \forall m \in \R, \; \exists \, (
         \tilde{\beta}_{i,m,\ell})_{i\in \N, \ell\in\N, \ell\leq m},
         \; \forall N \in \N, \\ & \quad  (\ln R^+)^m =\sum_{i=0}^N \sum_{\ell=0}^m (\ln M_k)^\ell \tilde{\beta}_{i,m,\ell} {(X_2^+)}^{2i} M_k^{-2i}  + o((M_k)^{-2N}),\\
       & \exists ({\gamma}_i)_{i \in \N}, \forall N \in \N, v_{n,p,q,+}(\theta^+) = \sum_{i=0}^N \tilde{\gamma_i} \, {(X_2)}^i \,  (M_k)^{-i}  + o((M_k)^{-N})
     \end{align*}
     Introducing the previous formulas into~\eqref{dX1vnp}, integrating exactly with respect to $X_2$ gives the desired formula.  
   \end{proof}
   \begin{lemma}\label{lema:calculInfernal2}
     For any $(n,q)\in \Z \times \R$ and for any $s \in \N$, there exists a sequence 
     $(C'_{n,q,t,r})_{t \leqslant s, r\leq q}$ such that
     \begin{multline}
       \label{eq:integration_Ik_pnp}
       \int_{{I}_{k}^+} \partial_\bn (X_1^+)^{\lambda_n-q}
       p_{n,q,+}(\ln \lvert X_1^+ \rvert,X_1^+,X_2^+) \chi_-(X_1^+) d\sigma(\mX)
        \\= \sum_{t=0}^s \sum_{r=0}^q
       C'_{n,q,t,r} (M_k)^{\lambda_n-q-t}
       (\ln M_k)^r +  o\big( (M_k)^{\lambda_n-q-s} \big).
     \end{multline}
   \end{lemma}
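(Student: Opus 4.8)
The plan is to run the argument almost verbatim to the proof of Lemma~\ref{lema:calculInfernal1}. As there, I would split $I_k^+=I_1\cup I_2$ into its circular piece $I_1$ of~\eqref{definitionI1} (on which $\partial_\bn=\partial_{R^+}$ and $R^+\equiv R_k:=\sqrt{M_k^2+4}$) and its straight piece $I_2$ of~\eqref{definitionI2} (on which $\partial_\bn=-\partial_{X_1^+}$ and $X_1^+\equiv-M_k$), and estimate the two contributions separately. The facts I would use about $p_{n,q,+}$, all from its construction in Appendix~\ref{sec:defin-prof-funct-wstar}, are that it is smooth, $1$-periodic in $X_1^+$, a polynomial of degree $\le q$ in $\ln|X_1^+|$, and exponentially decaying in $|X_2^+|$; the only effect of $\chi_-(X_1^+)$ (defined in~\eqref{eq:defchipm}) is to confine the integrand to $\{X_1^+<-1\}$, where $\chi_-\equiv1$ outside a thin transition strip that contributes only further terms of the same structure.

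On the straight piece $I_2=\{-M_k\}\times(-2,2)$ everything is explicit: $|X_1^+|=M_k$ and $\ln|X_1^+|=\ln M_k$ are constant, so $-\partial_{X_1^+}\big(|X_1^+|^{\lambda_n-q}p_{n,q,+}(\ln|X_1^+|,X_1^+,X_2^+)\big)$ is, by the product and chain rules, a finite sum of terms $c\,M_k^{\lambda_n-q-t}(\ln M_k)^r\,\Phi(X_1^+,X_2^+)|_{X_1^+=-M_k}$ with $t\in\{0,1\}$, $r\le q$, and $\Phi$ equal to $p_{n,q,+}$ or one of its first-order partials. Since these $\Phi$ are $1$-periodic in $X_1^+$ and $M_k-M_0\in\N$, their value at $X_1^+=-M_k$ equals their value at the fixed abscissa $-M_0$, so the remaining $\int_{-2}^{2}\Phi(-M_0,X_2^+)\,dX_2^+$ is a $k$-independent constant; the $I_2$-contribution is thus exactly of the claimed form, with no remainder.

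The circular piece $I_1$ is where the real work lies. On it $|X_2^+|=R_k|\sin\theta^+|\ge R_k\sin\theta_k=2$, with equality only at the two endpoints $\theta^+=\pi\pm\theta_k$ adjoining $I_2$, so the exponential decay of $p_{n,q,+}$ in $|X_2^+|$ makes the integrand $O(M_k^{-N})$ for every $N$ outside an $O(M_k^{-1}\ln M_k)$-neighbourhood of those endpoints. I would localise there, set $\psi=\pi-\theta^+$ near $\theta^+=\pi-\theta_k$ (the other endpoint being symmetric), write $X_1^+=-R_k\cos\psi$, $X_2^+=R_k\sin\psi$, $d\sigma=R_k\,d\psi$, and compute $\partial_{R^+}$ via $\partial_{R^+}X_1^+=-\cos\psi$, $\partial_{R^+}X_2^+=\sin\psi$, $\partial_{R^+}\ln|X_1^+|=1/R^+$. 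Then I would carry the nested expansions $R_k=M_k+2M_k^{-1}+O(M_k^{-3})$, $\theta_k=2M_k^{-1}+O(M_k^{-3})$, $\cos\psi=1-\psi^2/2+\cdots$, $\sin\psi=\psi-\cdots$, use the $1$-periodicity of $p_{n,q,+}$ in $X_1^+$ together with $-R_k\cos\psi\equiv-M_0+O(M_k^{-1})\pmod1$ and the smoothness of $p_{n,q,+}$ to Taylor-expand it and its derivatives about the fixed abscissa $X_1^+=-M_0$, and write $\ln|X_1^+|=\ln M_k+O(M_k^{-2})$. After extending each $\psi$-integral to $(\theta_k,\infty)$ at the cost of an $O(M_k^{-N})$ error, every surviving term is of the form $c\,M_k^{\alpha}(\ln M_k)^\beta\int_{\theta_k}^{\infty}\psi^a e^{-cR_k\psi}\,d\psi$; the substitution $u=R_k\psi$ (lower limit $R_k\theta_k=2+O(M_k^{-2})$) turns this into $R_k^{-a-1}$ times a quantity expanding in powers of $M_k^{-1}$ and $\ln M_k$. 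Regrouping the $I_1$ and $I_2$ contributions by powers of $M_k$ and $\ln M_k$ yields~\eqref{eq:integration_Ik_pnp}.

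The main obstacle I anticipate is purely organisational and is exactly the one already met in Lemma~\ref{lema:calculInfernal1}: on $I_1$ one must juggle three expansions simultaneously — the localisation furnished by $e^{-c|X_2^+|}$, the periodicity reduction of $X_1^+=-R_k\cos\psi$ to a perturbation of the fixed point $-M_0$, and the Taylor/asymptotic expansions of $R_k$, $\theta_k$, $\cos\psi$, $\sin\psi$ and $\ln|X_1^+|$ — and verify that each discarded term carries either an extra factor $M_k^{-1}$ from one of these expansions or an exponentially small tail in $M_k$, which is what forces the remainder to be $o\big(M_k^{\lambda_n-q-s}\big)$. The powers $(\ln M_k)^r$ with $r\le q$ arise solely from the polynomial-in-$\ln|X_1^+|$ part of $p_{n,q,+}$, the substitution $\ln R_k=\ln M_k+O(M_k^{-2})$ contributing nothing new.
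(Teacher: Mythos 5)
Your decomposition of $I_k^+$ into the straight piece $I_2$ (handled exactly via periodicity and evaluation at $X_1^+=-M_0$) and the circular piece $I_1$ (localised near $\theta^+=\pi\pm\theta_k$ by exponential decay and $\chi_-$) is exactly the one the paper uses, and your treatment of $I_2$ coincides with the paper's. Your overall strategy on $I_1$ — trivial contribution where $\chi_-=0$, exponentially negligible contribution away from the endpoints, then a compound expansion near the endpoints exploiting $R_k$-, $\theta_k$- and $\ln$-asymptotics together with the periodicity reduction $-R_k\cos\psi\equiv-M_0+o(1)\bmod 1$ — matches the paper's split of $I_1$ into $I_1^1,I_1^2,I_1^3$.

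The one genuine technical difference is in how the periodic, exponentially decaying profile functions $W_i^{\mathfrak t}$, $W_i^{\mathfrak n}$ in the definition of $p_{n,q,+}$ are expanded on the near-endpoint arc. You Taylor-expand $W(X_1^+,X_2^+)$ directly in $X_1^+$ about the fixed abscissa $-M_0$, using that the $X_1^+$-increment $R_k\cos\psi-M_k=O\bigl((\ln M_k)^2/M_k\bigr)$ is small. The paper instead writes $W$ in a Fourier series in $X_1^+$, factors out $\exp(-2i\pi p M_0)$ via $M_k=M_0+k$, and Taylor-expands the remaining exponential phase; the super-algebraic decay of the Fourier coefficients (\eqref{eq:rapid_convergence}) is then used to control the remainder $\mathcal{R}_N(p)\sim(2\pi p)^N/N!$ after summing over $p$. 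The two routes encode the same information (smoothness, $1$-periodicity, and exponential decay of $W$ in $X_2^+$). The Fourier route makes the control of the Taylor remainder uniform and explicit — the growth in $p$ of $\mathcal{R}_N(p)$ is beaten by $\exp(-4\pi p)$ — whereas your direct Taylor expansion implicitly requires uniform bounds $|\partial_{X_1}^{j}W(X_1^+,X_2^+)|\lesssim C_j e^{-\pi|X_2^+|}$ over the whole arc, which are true (by elliptic regularity plus exponential decay in $\mathcal{V}^+(\mathcal{B})$) but which you should state explicitly. You should also be slightly more careful with the size of the localisation: the relevant neighbourhood is of angular width $\sim\ln M_k/M_k$, not $M_k^{-1}\ln M_k$ in the claim for the $X_1^+$-shift — the increment is $O\bigl((\ln M_k)^2/M_k\bigr)$, not $O(M_k^{-1})$ — but this does not change the conclusion since every power of $\ln M_k$ is dominated by any positive power of $M_k$.
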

   \begin{proof}
     As in the proof of
     Lemma~\ref{lema:calculInfernal1},  we decompose $I_k^+$ into its
     circular part $I_1$and its straight part $I_2$ (cf.~\eqref{definitionI1}-\eqref{definitionI2}), and we study the
     integral over these two parts separately.

     \textbf{Integration over $I_2$:}
     on this part, $X_1^+=-M_k$ $\partial_\bn = - \partial_{X_1}^+$, and
     $\chi_-(X_1^+) = \chi(M_k) = 1$ for $k \geqslant 2$. Then,
     \begin{equation*}
       J_2 = \int_{-2}^2 - \partial_{X_1^+} \left( (X_1^+)^{\lambda_n-q}
       p_{n,q,+}(\ln |X_1^+|,\bX^+) \right)dX_2^+. 
     \end{equation*}
     We use expression of $p_{n,q,+}$ given by \eqref{definitionPmr}
     in Appendix~\ref{sec:defin-prof-funct_p}, which yields to consider intergrals of the form
     $$
     \int_{-2}^2 (X_1^+)^{\lambda_n-q-1} \left( \ln X_1^+ \right)^\kappa W(X_1^+, X_2^+) dX_2^+ \quad \kappa \in \N,
     $$ 
     where the functions $W$  are one periodic with respect to $X_1^+$ ($W(X_1^+, X_2^+) = W(-M_0, X_2^+)$). Moreover, since, by assumption the line $X_1^+ = -M_0$ does not intersect any obstacle, the functions $W(-M_0, X_2^+)$ are continuous and bounded for $X_2^+\in [-2, 2]$. Then, integrating exactly with respect to $X_2$ gives the desired formula.
     
     \textbf{Integration over $I_1$:}
     On this part, the normal derivative
     is given by
     \begin{equation}\label{Partialn}
     \partial_\bn = \partial_{R^+} =
     \frac{X_1^+}{R_k} \partial_{X_1^+} +
     \frac{X_2^+}{R_k} \partial_{X_2^+}, \quad R_k = \sqrt{M_k^2 + 4}.
     \end{equation}
      Here again, we separate this integral
     into three arcs:
     \begin{equation}
       \nonumber
       I_1^1 = \left\{ \left(R_k\cos \theta^+,
           R_k \sin \theta^+ \right) \in \R^2,  \theta^+
         \in (0, \pi/2 + \theta'_k) \cup ( \frac{3\pi}{2} - \theta'_k,
         \Theta) \right\}, \quad \theta'_k = \sin^{-1}(1/R_k)
     \end{equation}
      \begin{multline}
        \nonumber
       I_1^2 = \left\{ \left(R_k\cos \theta^+,
           R_k \sin \theta^+ \right) \in \R^2,  \theta^+
         \in (\pi/2 + \theta'_k, \pi - \theta''_k) \cup ( \pi+\theta''_k, \frac{3\pi}{2} - \theta'_k) \right\}, \\ \theta''_k = \sin^{-1} (\frac{\alpha \ln M_k} { R_k})
     \end{multline}
        \begin{equation*}
       I_1^3 = \left\{ \left({R_k} \cos \theta^+,
           {R_k} \sin \theta^+ \right) \in \R^2,  \theta^+
         \in (\pi - \theta''_k, \pi - \theta_k) \cup ( \pi+\theta_k,
         \pi+\theta''_k) \right\}.
     \end{equation*}
The parameter $\alpha \in \R_+^\ast$ (defining $\theta''_k$) will be fixed later. We remark that $R_k \cos(\pi/2+ \theta_k') = -1$ and $R_k \sin (\pi \mp \theta''_k) = \pm \alpha  \ln M_k$.\\
   
   \noindent   \textit{Integration over $I_1^1$:} this integration is trivial,
     because on this integration domain, $X_1^+ \geqslant R_k \cos(\pi/2+ \theta_k') \geqslant -1$ and
     therefore $\chi_-(X_1^+)=0$.\\

     \noindent \textit{Integration over $I_1^2$:} because the functions $p_{n,q,+}$ are exponentially decaying with respect to $X_2^+$,  we shall prove that the corresponding 
     integral is $o(M_k)^{\lambda_n-p-s}$. First, we remind that 
     the profile functions
     $W_i^{\mathfrak{t}}$ and  $W_i^{\mathfrak{n}}$ are in $\mathcal{V}^+(\mathcal{B})$ and are smooth on $I_1^2$. It follows that  $W_i^{\mathfrak{t}}$, $W_i^{\mathfrak{n}}$ and their derivatives can be bounded by
     $C_i \exp(- \pi X_2^+)$. Since  $\lvert X_1^+ \rvert \leq R_k$, it follows that there exists $C>0$ such that
     \begin{multline}
       \label{eq:integration_pnp_I12}
      J_1^2 =  \int_{I_1^2} \partial_\bn \left\{ (X_1^+)^{\lambda_n-p}
       p_{n,p,+}(\ln \lvert X_1^+ \rvert,X_1^+,X_2^+) \chi_-(X_1^+) \right\}
       d\sigma(\mX) \\ \leqslant C \, \big( R_k  \big)^{\lambda_n-p} \lvert
       \ln R_k \rvert^p \int_{I_1^2} \exp( - \pi X_2^+) d\sigma(\bX)
     \end{multline}
     We parametrize then the arc $I_1^2$ by
     $X_2 \in \pm \left(\alpha \ln M_k, R_k \cos( \tilde{\theta}_k)\right)$, which means that $X_1^+ = - \sqrt{R_k^2 -X_2^2}$ and
     $d\sigma(\bX) = \lvert R_k/X_1^+\rvert dX_2$. In addition, since 
     $\lvert X_1^+ \rvert \geqslant 1$ on $I_1^2$ (by construction),
     $\lvert R_k^+/X_1^+ \rvert \leqslant
     R_k$.
     Therefore, 
      \begin{equation}
        \nonumber
        J_1^2 \leq    \big(R_k \big)^{\lambda_n-p+1} \lvert
        C   \ln R_k \rvert^p \int_{\alpha \ln M_k}^\infty
        \exp(-\pi X_2^+) dX_2^+ = \frac{C}{\pi} \big(R_k \big)^{\lambda_n-p+1} \lvert
        \ln R_k \rvert^p M_k^{-\pi \alpha},
     \end{equation}
     which is equivalent to $M_k^{\lambda_n-p+1-\pi\alpha} \lvert \ln M_k \rvert^p$ as $k$ tends toward infinity.
     In the end, choosing $\alpha = (s+2)/\pi$, we see that $J_1^2 = o\big( (M_k)^{\lambda_n-p-s} \big)$.\\

     \textit{Integration over $I_1^3$:} on this integration domain,
     $\chi_-(X_1^+)=1$, and
     $\lvert X_2^+\rvert \in (2,  \alpha \ln M_k)$. It follows that
     $X_2 / R_k$ is uniformly bounded by $\alpha \ln
     M_k / M_k$, which tends to $0$ as $M_k$ tends to
     infinity.  Combining formula~\eqref{Partialn} and  the definition~\eqref{definitionPmr} of the function $p_{n,q,+}$, we see that we have to evaluate the two following kinds of integrals: 
     \begin{equation}\label{J13K13}
     J_1^3 = \int_{I_1^3} \frac{ (X_1^+)^{\lambda_n-q}}{R_k}
     \ln(|X_1^+|)^\kappa W(X_1^+, X_2^+) d\sigma, \quad K_1^3 = \int_{I_1^3} \frac{X_2^+ (X_1^+)^{\lambda_n-q}}{R_k} \ln(|X_1^+|)^\kappa W(X_1^+, X_2^+) d\sigma 
     \end{equation}
     where $\kappa \in \N$ and $W \in\mathcal{V}^+(\mathcal{B})$ (exponentially decaying with respect to $X_2^+$) is a (generic) $1$-periodic function in $X_1^+$. In fact, $W$ stands for either the profile functions $W_{i}^{\mathfrak{t}}$ and $W_{i}^{\mathfrak{n}}$ (defined in~\eqref{ProblemWpt}-\eqref{ProblemWpn}) or their partial derivatives with respect to $X_1^+$ and $X_2^+$.  Consequently, $W$  
     admits the following Fourier series decomposition for  $|X_2^+| > 2$:
    \begin{equation}
       \label{eq:Fourier_representation_W_i_a}
     \exists R \in \N,   \exists\, (c_{r,p,\pm})_{r \leqslant R, p \in
         \IZ^\ast}, \quad  W(\mX) = \sum_{r=0}^R \sum_{p
       \not=0} c_{r,p,\pm} \exp(\imath 2 \pi p X_1^+),
     (X_2^+)^r \exp(-2\pi p |X_2^+|),  
   \end{equation}
     the coefficients
     $c_{r,p,\pm}$ being super-algebraically convergent as  $p \to \pm \infty$, \ie
          \begin{equation}
       \label{eq:rapid_convergence}
  \forall r \in \N,  r \leq R,   \forall \beta \in \R, \quad   \sum_{p \not=0} p^\beta c_{r,p,\pm} \exp(-4\pi
       p) < \infty.
     \end{equation}
         Since $X_1^+ = - \sqrt{R_k^2-(X_2^+)^2} = -M_k \sqrt{1+
       \frac{4-X_2^2}{M_k^2}}$, similarly to the proof of
     Lemma~\ref{lema:calculInfernal1}, the following expansions hold:
     \begin{equation}\label{Taylor1}
        \forall  s \in \Z, \exists (\tilde{\alpha}_{i,j,s})_{(i,j)\in
          \N^2},
         \forall N \in \N,\quad (X_1^+)^s = \sum_{i=0}^N \sum_{j=0}^i
         \tilde{\alpha}_{i,j,s} (X_2^+)^{2j}  \, (M_k)^{s-2i}+
         M_k^s o((\ln M_k / M_k)^{2N}) 
         \end{equation}
         \begin{multline}\label{Taylor2}
        \forall m \in \R, \; \exists \, (
        \tilde{\beta}_{i,j,m,\ell})_{(i,j,\ell)\in \N^3, \ell\leq m},
        \; \forall N \in \N, \\  (\ln |X_1^+|)^m =\sum_{i=0}^N
        \sum_{j=0}^i \sum_{\ell=0}^m (\ln M_k)^\ell \tilde{\beta}_{i,j,m,\ell} (X_2^+)^{2j} M_k^{-2i}  + o\left( \frac{(\ln M_k)^{m+2N}}{ M_k^{2N}}\right).
     \end{multline}
   Then, here again, we parameterize the arc $I_1^3$ by
     $X_2^+ \in \pm(2,\alpha \ln M_k)$. Expanding
     $\lvert R_k / X_1^+\rvert$ with respect to $X_2^+$, we obtain
     \begin{equation}\label{Taylor3}
       \exists (\tilde{\gamma}_{i,j})_{(i,j)\in \N^2}, \forall N \in \N,\quad
       d\sigma(\mX) = \sum_{i=0}^N \sum_{j=0}^i \tilde{\gamma}_{i,j} (X_2^+)^{2j}
       (M_k)^{-2i} dX_2^+          +  o(\left(\frac{\alpha \ln M_k}{M_k}\right)^{-2N}) dX_2^+, 
     \end{equation}
     and
     \begin{equation}\label{Taylor4}
       \exists (\tilde{\delta}_{1,i,j})_{(i,j)\in \N^2}, \forall N \in \N,\quad
       \quad X_1^+ / R_k = \sum_{i=0}^N \sum_{j=0}^i \tilde{\delta}_{1,i,j} (X_2^+)^{2j}
       (M_k)^{-2i} + o(\left(\frac{\alpha \ln M_k}{M_k}\right)^{-2N}),
     \end{equation}
     \begin{equation}\label{Taylor5}
       \exists (\tilde{\delta}_{2,i})_{i\in \N}, \forall N \in \N,
       \quad X_2^+ / R_k = \sum_{i=0}^N \tilde{\delta}_{2,i} X_2^+
       (M_k)^{-2i-1}  + o( \frac{\alpha \ln M_k}{ (M_k)^{2N+1}}).
     \end{equation}
   It remains to expand  $W(X_1^+, X_2^+)$, expressing $X_1^+$ in terms of $X_2^+$. More specifically, thanks to~\eqref{eq:Fourier_representation_W_i_a}, we have to compute $\exp(2 \imath \pi X_1^+)$  for any $p\in \Z^\ast$.  Since  $M_k = M_0 + k$, 
     \begin{equation}
       \nonumber
       \exp(2 \imath \pi p X_1^+) = \exp(-2 \imath \pi p \sqrt{R_k^2
         -X_2^2} )  = \exp(-2 \imath \pi p M_0)
       \exp\Bigg(-2 \imath \pi p M_k \bigg(
       \sqrt{1+\frac{4-(X_2^+)^2}{M_k^2}} - 1 \bigg)\Bigg).
     \end{equation}
     Then, using that
     $M_k \bigg( \sqrt{1+\frac{4-(X_2^+)^2}{M_k^2}} - 1 \bigg) =
     O(\ln^2 M_k / M_k)$
     which tends to $0$ as $M_k$ tends to $0$, for $p$ fixed, we can
     make a Taylor expansion of this exponential term with respect to
     $X_2$:
     \begin{multline}
       \label{eq:expansion_exp}
       \exists (\tilde{\zeta}_i)_{i\in \N}, \quad \forall N \in \N,
       \exp\Bigg(-2 \imath \pi p M_k \bigg(
       \sqrt{1+\frac{4-(X_2^+)^2}{M_k^2}} - 1 \bigg)\Bigg) \\ =
       \sum_{n=0}^N \frac{(-2 \imath \pi p)^n}{n!} \left[ M_k \bigg(
         \sqrt{1+\frac{4-(X_2^+)^2}{M_k^2}} - 1 \bigg) \right]^n +
       \mathcal{R}_N(p) \phi((\ln^2 M_k / M_k)^{N}) 
     \end{multline}
     where the remainder $\mathcal{R}_N(p)$ is polynomial with respect
     to $p$ and behaves like $(2\pi p)^N/(N!)$ for $N$ fixed as
     $p \to \infty$, and the function $\phi(x)$ is $o(x)$ as
     $x \to 0$. In \eqref{eq:expansion_exp}, expanding the polynomial
     sum with respect to $X_2^+$ and neglecting the terms in
     $o(M_k)^{-N}$ gives
     \begin{multline}
       \label{eq:expansion_exp2}
       \exists (\tilde{\zeta}_{i,j})_{(i,j)\in \N^2}, \quad \forall N
       \in \N, \exp\Bigg(2 \imath \pi p M_k \bigg(
       \sqrt{1+\frac{4-(X_2^+)^2}{M_k^2}} - 1 \bigg)\Bigg) \\
       1 + \sum_{n=1}^N \frac{(2 \imath \pi p)^n}{n!} M_k^{n}
       \sum_{i=1}^{\lfloor(N+n)/2\rfloor} \sum_{j=0}^i
       \tilde{\zeta}_{i,j} (X_2^+)^{2j} (M_k)^{-2i} + \frac{(2\pi
         p)^N}{N!} \tilde{\phi}((\ln^2 M_k / M_k)^N),
     \end{multline}
     where $\tilde{\phi}(x)$ is also $o(x)$ as $x \to 0$.
     Finally, we insert \eqref{eq:expansion_exp2} in
     \eqref{eq:Fourier_representation_W_i_a} and we obtain
     \begin{multline}
       \label{eq:Fourier_representation_W_i_a_2}
       W(\mX) = \sum_{r=0}^R \sum_{p
         \not=0} c_{r,p,\pm} \left( 1 + \sum_{n=1}^N
         \frac{(2 \imath \pi p)^n}{n!} M_k^{n}
         \sum_{\ell=1}^{\lfloor(N+n)/2\rfloor} \sum_{j=0}^\ell
         \tilde{\zeta}_{\ell,j} (X_2^+)^{2j} (M_k)^{-2\ell} \right)
       \\ (X_2^+)^r \exp(-2\pi p |X_2^+|) \\ + \sum_{r=0}^R \sum_{p
         \not=0} c_{r,p,\pm} (X_2^+)^r \exp(-2\pi p
       |X_2^+|) \mathcal{R}_N(p) \tilde{\phi}((\ln^2 M_k / M_k)^{N})
     \end{multline}
     To estimate the remainder in
     \eqref{eq:Fourier_representation_W_i_a_2}, we use that
     \begin{equation}
       \nonumber (X_2^+)^r \exp(-2\pi p |X_2^+|) \mathcal{R}_N(p)
       \tilde{\phi}((\ln^2 M_k / M_k)^{N}) \leqslant (\alpha \ln M_2^+)^R
       \exp(-4 \pi p) \mathcal{R}_N(p) \tilde{\phi}((\ln^2 M_k /
       M_k)^{N}),
     \end{equation}
     which, together with \eqref{eq:rapid_convergence} taking $\beta = -2-N$ gives     \begin{equation}
       \nonumber
       \sum_{r=0}^R \sum_{p
         \not=0} c_{r,p,\pm} (X_2^+)^r \exp(-2\pi p
       |X_2^+|) \mathcal{R}_N(p) \tilde{\phi}((\ln^2 M_k / M_k)^{N})
       = o \left( \frac{(\ln M_k)^{2N+R}} {(M_K)^N}  \right).
     \end{equation}
     Finally, we insert the expansions~\eqref{Taylor1}-\eqref{Taylor2}-\eqref{Taylor3}-\eqref{Taylor4}-\eqref{Taylor5}-\eqref{eq:Fourier_representation_W_i_a_2}  written up to order $N=\lambda_n-q-s$ into~\eqref{J13K13}. We obtain 
         \begin{multline}
       \label{integration_part_X3}
      J_1^3 
       =  \sum_{t=0}^s \sum_{r=0}^\kappa (M_k)^{\lambda_n-q-t} (\ln
       M_k)^r
       \sum_{i=0}^{Q} \sum_{p \not=0} c_{i,p,t,r,\pm} \int_{\pm 2}^{\pm
         \alpha \ln M_k} (X_2^+)^i \exp(-2\pi p |X_2^+|) dX_2^+
       \\ +
       o\big( (\ln M_k)^{\tilde{Q}} (M_k)^{\lambda_n-q-s} \big),
     \end{multline}
     where $Q$ and $\tilde{Q}$ are positive integers depending on $s$, $R$ and $\kappa$.
     Note also that the sum over $p$ converges using again
     \eqref{eq:rapid_convergence} with $\beta = -2-Q$.
     To conclude, it remains to estimate each integral that appears on
     \eqref{integration_part_X3}. A direct integration by parts gives,
     for any numbers $0 < a < b$,
     \begin{equation}
       \label{eq:Calcul_Infernal_2_IPP_2}
       \int_a^b (X_2^+)^i \exp(-2\pi p X_2^+) dX_2^+ = i! \sum_{k=0}^i
       \frac{(2\pi p)^{k-1-i}}{k!} \big( a^i \exp(-2 p \pi a) - b^i \exp(-2 p \pi b) \big).
     \end{equation}
     We use then \eqref{eq:Calcul_Infernal_2_IPP_2} for $a=2$ and
     $b=\alpha \ln M_k$, such that $b^i \exp(-2 \pi p b) = (\alpha \ln
     M_k)^i (M_k)^{-2 \pi p \alpha}$. Using that $\alpha = (s+2)/\pi$,
     the sum of $b^i \exp(-2 \pi b)$ over $i$ is negligible with
     respect to $(M_k)^{t-s}$ ($-t-s-4<0$). Then \eqref{eq:Calcul_Infernal_2_IPP_2} becomes
     \begin{equation}
       \label{eq:Calcul_Infernal_2_IPP}
       \int_2^{\alpha \ln M_k} (X_2^+)^i \exp(-2\pi p |X_2^+|)
     dX_2^+  = i! \exp(-4\pi p) \sum_{k=0}^i
     \frac{(2\pi p)^{k-1-i}2^i}{k!} + o((M_k)^{t-s}).
     \end{equation}
     Inserting \eqref{eq:Calcul_Infernal_2_IPP} in
     \eqref{integration_part_X3} gives the desired result for $J_1^3$, the analysis of $K_1^3$ being similar.
   \end{proof}

\section{Complete definition of the asymptotic blocks}
The definition of the asymptotic block
$\mathcal{U}_{n,p,\pm}$~\eqref{eq:ansatz_U_star_p} requires the
definition of the functions $w_{n,q,\pm}$ and $p_{n, q,\pm}$.  To do
that, we first need to introduce two families of boundary layer
functions $W_i^{\mathfrak{t}}$ and $W_i^{\mathfrak{n}}$.

\subsection{Two families of boundary layer profile functions $W_i^{\mathfrak{t}}$ and  $W_i^{\mathfrak{n}}$}

Let $W_i^{\mathfrak{t}} =0$ for any negative integer $i$, and, for $i\geq0$, we define $W_{i}^{\mathfrak{t}} \in \mathcal{V}^+(\mathcal{B})$ as the unique decaying solution to
\begin{equation}\label{ProblemWpt}
\left \lbrace
\begin{aligned}
\dsp - \Delta_{\mX} W_{i}^{\mathfrak{t}}(\mX) & \; = &&  F_{i}^{\mathfrak{t}}(\mX) + \frac{\mathcal{D}_{i}^{\mathfrak{t}}  }{2}
\dsp [g_0(\mX)] +  \frac{\mathcal{N}_{i}^{\mathfrak{t}}}{2}
\dsp [g_1(\mX)]  \quad \mbox{in} \; \mathcal{B}, \\
\partial_{\mathbf{n}} W_{i}^{\mathfrak{t}} &\;  = &&  G_i^{\mathfrak{t}}(\mX)\quad \mbox{on} \;  \partial
\widehat{\Omega}_\hole,\\[1ex]
\partial_{X_1} W_{i}^{\mathfrak{t}} (0,X_2) & \; = &&  \partial_{X_1} W_{i}^{\mathfrak{t}}(1,X_2), \quad X_2 \in
\R,
\end{aligned} 
\right.
\end{equation}
where $G_i^{\mathfrak{t}}(\mX) =  -W_{i-1}^{\mathfrak{t}} \mathbf{e}_1\cdot {\mathbf{n}}$ and
\begin{multline}\label{DefFpt}
F_{i}^{\mathfrak{t}}(\mX) = 2 \partial_{X_1} W_{i-1}^{\mathfrak{t}}(\mX) \,+\, W_{i-2}^{\mathfrak{t}}(\mX)
\,+\, (-1)^{\lfloor i/2 \rfloor} \left(
  2\, \langle g_i(\mX)\rangle \, \delta_{i}^{\text{even}} \right)  \\+ \sum_{k=2}^{i-1}
(-1)^{\lfloor k/2 \rfloor}  \frac{[g_k(\mX)]}{2}
\delta_{k}^{\text{even}}  \mathcal{D}_{i-k}^{\mathfrak{t}} + \sum_{k=2}^{i-1}
(-1)^{\lfloor k/2 \rfloor}  \,
\frac{[g_k(\mX)]}{2} \, \delta_{k}^{\text{odd}} \, \mathcal{N}_{i-k+1}^{\mathfrak{t}}.
\end{multline}
In~\eqref{DefFpt}, the constants $\mathcal{D}_i^{\mathfrak{t}}$ and $\mathcal{N}_i^{\mathfrak{t}}$ are given by

\begin{equation}\label{DefDptNpt}
\dsp \mathcal{D}_i^{\mathfrak{t}} = \int_{\mathcal{B}} F_{i}^{\mathfrak{t}}\,
\mathcal{D} + \int_{\partial
\widehat{\Omega}_\hole} G_{i}^{\mathfrak{t}}\, \mathcal{D} ,  \quad \quad \; \mathcal{N}_i^{\mathfrak{t}} = -\int_{\mathcal{B}} F_{i}^{\mathfrak{t}}\mathcal{N} - \dsp \int_{\partial
\widehat{\Omega}_\hole} G_{i}^{\mathfrak{t}}\mathcal{N}. 
\end{equation}
and, for $k \in \N$,
$
\langle  g_k(\mX) \rangle := \tfrac12 [  \Delta, \chi_+ + \chi_-] \left( \frac{X_2^k}{k!}\right)$, 
$\left[  g_k(\mX)   \right] := [  \Delta, \chi_+ - \chi_-] \left( \frac{X_2^k}{k!}\right)$.
Moreover, $\delta_k^{\text{odd}}$ is equal to the remainder of the euclidian
division of $k$ by $2$  (\ie $\delta_k^{\text{odd}} $ is equal to $1$ if $k$ is odd
and equal to $0$ if $k$ is even), $\delta_k^{\text{even}} = 1 -
\delta_k^{\text{odd}}$ and,  $\lfloor r \rfloor$ denotes the floor of a
real number $r$. \\

\noindent Similarly, let $W_i^{\mathfrak{n}} = 0$, for $i\leq 0$. Then, for $i\geq 1$, we define $W_{i}^{\mathfrak{n}} \in \mathcal{V}^+(\mathcal{B})$ as the unique decaying solution to
\begin{equation}\label{ProblemWpn}
\left\lbrace
\begin{aligned}
\dsp - \Delta_{\mX} W_{i}^{\mathfrak{n}}(\mX) & \; = &&  F_{i}^{\mathfrak{n}}(\mX) + \frac{\mathcal{D}_{i}^{\mathfrak{n}}  }{2}
\dsp [g_0(\mX)] +  \frac{\mathcal{N}_{i}^{\mathfrak{n}}}{2}
\dsp [g_1(\mX)]  \quad \mbox{in} \; \mathcal{B}, \\
\partial_{\mathbf{n}} W_{i}^{\mathfrak{n}} & \;= &&G_{i}^{\mathfrak{n}}(\mX)     \quad  \mbox{on} \;  \partial
\widehat{\Omega}_\hole,\\
\partial_{X_1} W_{i}^{\mathfrak{n}} (0,X_2) & \;= &&  \partial_{X_1} W_{i}^{\mathfrak{n}}(1,X_2), \quad X_2 \in
\R,
\end{aligned} 
\right.
\end{equation}
where $G_i^{\mathfrak{n}}(\mX) =  -W_{i-1}^{\mathfrak{n}} \mathbf{e}_1\cdot {\mathbf{n}}$ and
\begin{multline}\label{DefFpn}
F_{i}^{\mathfrak{n}}(\mX) = 2 \partial_{X_1} W_{i-1}^{\mathfrak{n}}(\mX) \,+\, W_{i-2}^{\mathfrak{n}}(\mX)
\,+\, (-1)^{\lfloor i/2 \rfloor} \left(
  2 \, \langle g_i(\mX)\rangle \, \delta_{i}^{\text{odd}} \right)  \\+ \sum_{k=2}^{i-1}
(-1)^{\lfloor k/2 \rfloor}  \frac{[g_k(\mX)]}{2}
\delta_{k}^{\text{even}} \mathcal{D}_{i-k}^{\mathfrak{n}} + \sum_{k=2}^{i-1}
(-1)^{\lfloor k/2 \rfloor}  \, \frac{[g_k(\mX)]}{2}
\delta_{k}^{\text{odd}}  \, \mathcal{N}_{i-k+1}^{\mathfrak{n}},
\end{multline}
 the constants $\mathcal{D}_i^{\mathfrak{n}}$ and $\mathcal{N}_i^{\mathfrak{n}}$ being given by
%\begin{equation}\label{DefDpnNpn}
%\mathcal{D}_p^{\mathfrak{n}}= \int_{\mathcal{B}} F_{p}^{\mathfrak{n}}(\mX)
%\mathcal{D}(\mX) d\mX, \quad \mathcal{N}_p^{\mathfrak{n}} = -\int_{\mathcal{B}} F_{p}^{\mathfrak{n}}(\mX)
%\mathcal{N}(\mX) d\mX.
%\end{equation}
%\begin{equation}\label{DefDpnNpn}
%\left.
%\begin{array}{l}
%\dsp \mathcal{D}_p^{\mathfrak{n}} = \int_{\mathcal{B}} F_{p}^{\mathfrak{n}}(\mX)
%\mathcal{D}(\mX) d\mX + \int_{\partial
%\widehat{\Omega}_\hole} G_{p}^{\mathfrak{n}}(\mX) \mathcal{D}(\mX)  d\sigma, \\ \dsp \mathcal{N}_p^{\mathfrak{n}} = -\int_{\mathcal{B}} F_{p}^{\mathfrak{n}}(\mX)
%\mathcal{N}(\mX) d\mX - \dsp \int_{\partial
%\widehat{\Omega}_\hole} G_{p}^{\mathfrak{n}}(\mX) \mathcal{N}(\mX)  d\sigma. 
%\end{array}
%\right.
%\end{equation}
\begin{equation}\label{DefDpnNpn}
\dsp \mathcal{D}_i^{\mathfrak{n}} = \int_{\mathcal{B}} F_{i}^{\mathfrak{n}}\,
\mathcal{D} + \int_{\partial
\widehat{\Omega}_\hole} G_{i}^{\mathfrak{n}}\, \mathcal{D} ,  \quad \quad \; \mathcal{N}_i^{\mathfrak{n}} = -\int_{\mathcal{B}} F_{i}^{\mathfrak{n}}\mathcal{N} - \dsp \int_{\partial
\widehat{\Omega}_\hole} G_{i}^{\mathfrak{n}}\mathcal{N}. 
\end{equation}

\begin{remark} The well posedness of Problem~\eqref{ProblemWpn} and
  Problem~\eqref{ProblemWpt} results from the application of
  Proposition~\ref{prop:layer_existence_uniqueness_problem_strip}
  noticing that the right-hand sides of Problem~\eqref{ProblemWpn} and
  Problem~\eqref{ProblemWpt} satisfy the
  conditions~\eqref{eq:prop_layer_existence_uniqueness_compatibility_D}-\eqref{eq:prop_layer_existence_uniqueness_compatibility_N}.
\end{remark}

\subsection{Definition of the profile functions $w_{n,q,\pm}$}
\label{sec:defin-prof-funct-wstar}
\label{sec:defin-prof-funct-wn}

We shall construct the functions $w_{n,q,\pm}$ as
\begin{equation}
  \label{eq:definition_w_star_p}
  w_{n,q,\pm}(\ln R^\pm,\theta^\pm) = \sum_{s=0}^q w_{n,q,s,\pm}(\theta^\pm) (\ln
  R^\pm)^s,  \quad q \in \N, \quad  w_{n,q,s,\pm} \in
  \mathcal{C}^\infty(\overline{I_1^\pm})\cap \mathcal{C}^\infty(\overline{I_2^\pm}),
\end{equation} 
where $I_1^\pm = (a^\pm, \gamma^\pm)$, $I_2^\pm = (\gamma^\pm, b^\pm)$
with $a^+ = 0$, $\gamma^+ = \pi$, $b^+ = \Theta$, and,
$a^- = \pi-\Theta$, $\gamma^- = 0$, $b^- = \pi$. The construction is
done by induction on $q$. The functions $w_{n,0,\pm}$ have already
been defined in~\eqref{eq:defintion_w_0_star}:
\begin{equation}
  \nonumber
  w_{0,0,\pm}(\ln R^\pm, \theta^\pm) = \ln R^\pm,   \quad  w_{n,0,+}(\theta^+) = \cos (\lambda_n \theta^+), \quad
  w_{n,0,-}( \theta^-) = \cos \left( \lambda_n (\theta^--\pi)\right),
\end{equation}

For $q \geq 1$, we construct $w_{n,q,\pm}$ of the form
\eqref{eq:definition_w_star_p} such that the function
\begin{equation*}
  \ttv_{n,q,\pm}(R^\pm, \theta^\pm) = (R^\pm)^{\lambda_n -q}
  w_{n,q,\pm}(\ln R^\pm, \theta^\pm)
\end{equation*}
satisfies
\begin{equation}
  \label{eq:problem_v_star_q_plus}
  \left\lbrace
    \begin{aligned}
      \Delta \ttv_{n,q,\pm} &  \; = &&  0 \; \mbox{in}\;
      \mathcal{K}_1^\pm \cap \mathcal{K}_2^\pm      , \\
      \partial_\theta \ttv_{n,q,\pm}(a^\pm)   = \partial_\theta \ttv_{n,q,\pm}(b^\pm) & \; = && 0 , \\
      \dsp [\ttv_{n,q,\pm}(R^\pm, \gamma^\pm)]_{\partial \mathcal{K}_1^\pm
        \cap \partial \mathcal{K}_2^\pm } & \;  = &&
      (R^\pm)^{\lambda_n-q}  \,
      \tta_{n,q,\pm}(\ln R^\pm)  ,  \\
      \dsp [\partial_{\theta^{\pm}} \ttv_{n,q,\pm}(R^\pm, \gamma^\pm)]_{\partial
        \mathcal{K}_1^\pm \cap \partial \mathcal{K}_2^\pm } & \; =
      &&
      (R^\pm)^{\lambda_n-q} \, \ttb_{n,q,\pm}(\ln R^\pm), 
\end{aligned} \right. \quad \;  \forall q \in \IN^\ast,
\end{equation}
where, for $j = \lbrace 1,2 \rbrace$,
$
%  \label{eq:definitionCones1et2}
  \mathcal{K}_j^{\pm}  = \left\{ (R^\pm \cos \theta^\pm,  R^\pm \sin
    \theta^\pm) \in \mathcal{K}^\pm, \ R^\pm \in \IR^\ast, \theta^\pm \in I_j^\pm \right\}
$, 
and,
\begin{align}
  \label{eq:a_star_p_pm}
  & \tta_{n,q,\pm}(\ln R^\pm)= \sum_{r=0}^{q-1} \left( \mathcal{D}_{q-r}^{\mathfrak{t}}
  g_{n,r,q-r,\pm}^{\mathfrak{t}}(\ln R^\pm) +  \mathcal{D}_{q-r}^{\mathfrak{n}}
  g_{n,r,q-r,\pm}^{\mathfrak{n}}(\ln R^\pm) \right), \\
  \label{eq:b_star_p_pm}
  &  \ttb_{n,q,\pm}(\ln R^\pm)= \sum_{r=0}^{q-1} \left( \mathcal{N}_{q+1 - r}^{\mathfrak{t}}
  h_{n,r,q-r,\pm}^{\mathfrak{t}}(\ln R^\pm) +  \mathcal{N}_{q+ 1 -r}^{\mathfrak{n}}
  h_{n,r,q-r,\pm}^{\mathfrak{n}}(\ln R^\pm) \right).
\end{align}
The  reals coefficients $\mathcal{D}_i^{\mathfrak{t}}$,
$\mathcal{D}_i^{\mathfrak{n}}$, $\mathcal{N}_i^{\mathfrak{t}}$ and
$\mathcal{N}_i^{\mathfrak{t}}$ are defined in~\eqref{DefDptNpt}-\eqref{DefDpnNpn}. The functions $g_{n,r,t,\pm}^{\mathfrak{t}}$,
$g_{n,r,t,\pm}^{\mathfrak{n}}$ are defined by the following relations:
for $r\in \N$, $t \in \N$, 
\begin{align*}
%  \label{eq:g_n_star_p_pm_t}
  & (R^\pm)^{\lambda_n-r -t}  \; g_{n,r,t,\pm}^{\mathfrak{t}}(\ln R^+)
    \\ & \quad = (\mp 1)^t
    \frac{\partial^t}{(\partial R^\pm)^t}  \left[ (R^\pm)^{\lambda_n-r} \,
    \langle w_{n,r,\pm}(\gamma^\pm, \ln R^\pm) \rangle 
    _{\partial \mathcal{K}_1^\pm \cap \partial \mathcal{K}_2^\pm} \right],
  \\
  %\label{eq:g_n_star_p_pm_n}
  & (R^\pm)^{\lambda_n-r -t} \; g_{n,r,t,\pm}^{\mathfrak{n}}(\ln R^+)
  \\ & \quad = (\mp 1)^t
    \frac{\partial^{t-1}}{( \partial R^\pm)^{t-1}}  \left[ (R^\pm)^{\lambda_n-r-1} \,
    \langle \partial_{\theta^\pm} w_{n,r,\pm}(\gamma^\pm, \ln R^\pm) \rangle
    _{\partial \mathcal{K}_1^\pm \cap \partial \mathcal{K}_2^\pm}
    \right], (t\geq 1)
\end{align*}
$g_{n,r,0,\pm}^{\mathfrak{n}} =0$, $
  h_{n,r,t,\pm}^{\mathfrak{t}} = \mp g_{n,r,t+1,\pm}^{\mathfrak{t}}$ and
  $h_{n,r,t,\pm}^{\mathfrak{n}} = \mp g_{n,r,t+1,\pm}^{\mathfrak{n}}$.\\

%The recursive procedure to construct the terms $w_{n,p,\pm}$ is
%the following: assume that the terms $w_{n,q,\pm}$ are known for
%any $q \leq p-1$. To construct $w_{n,p,\pm}$
%we need to know the source terms $\tta_{n,p,\pm}$ and
%$\ttb_{n,p,\pm}$, which require the computation of
%$g_{n,r,p-r,\pm}^\mathfrak{t}$ and
%$g_{n,r,p-r,\pm}^\mathfrak{n}$ for $r \leq p-1$. But,
%$g_{n,r,p-r,\pm}^\mathfrak{t}$ and
%$g_{n,r,p-r,\pm}^\mathfrak{n}$ only depend on the function
%$w_{n,r,\pm}$, which, thanks to the induction hypothesis is
%known. Similarly, to construct $\ttb_{n,p,\pm}$, we need to define
%$h_{n,r,p-r,\pm}^\mathfrak{t}$ and
%$h_{n,r,p-r,\pm}^\mathfrak{n}$ for $r\leq p-1$. These terms only
%depend on $w_{n,r,\pm}$, and, as a consequence, are known.  

\noindent The
existence $w_{n,q,\pm}$ of the form~\eqref{eq:definition_w_star_p}
results from the following Lemma (see also Chapter 3
in Ref.~\cite{TheseAbdelkader} and the Section 6.4.2 in Ref.~\cite{MR1469972}
for the proof).
\begin{lemma}
  \label{lema:technical_lemma_sol}
  Let $j\in \N$, $\lambda \in \R$, $(\tta,\ttb) \in \R^2$ and 
  \begin{equation*}
    N = \begin{cases} 
      j & \mbox{if}  \; \lambda \notin \frac{\pi}{\Theta}\IZ, \\
      j+1 & \mbox{if} \; \lambda \in \frac{\pi}{\Theta}\IZ^\ast, \\
      j+2 & \mbox{if} \; \lambda = 0.
    \end{cases} 
  \end{equation*}
  There exist $N+1$ functions
  $g_k \in \mathcal{C}^\infty(\overline{I_1^\pm}) \cap
  \mathcal{C}^\infty(\overline{I_2^\pm} )$,
  ($0 \leq k \leq N$), such that the function
  $
    \ttv(R^\pm, \theta^\pm) = (R^\pm)^\lambda \left(  \sum_{k=0}^{N}
      (\ln R^\pm)^k g_k(\theta^\pm) \right) 
  $  satisfies
  \begin{equation}
    \label{eq:ProblemeLemmeTechniqueCone}
    \left \lbrace
      \quad \begin{aligned}
        \Delta \ttv &\; = &&  0 \; \mbox{in}\; \mathcal{K}_1^\pm \cap \mathcal{K}_2^\pm  , \\
        \partial_\theta \ttv(R^\pm,a^\pm) =   \partial_\theta \ttv(R^\pm,b^\pm) & \; = &&  0, \\ 
      %  \partial_\theta \ttv(R^\pm,b^\pm) & \; = &&  0, \\
        [\ttv(R^\pm,\gamma^\pm)]_{\partial \mathcal{K}_1^\pm
          \cap \partial \mathcal{K}_2^\pm} & \; = &&
        \tta \, (R^\pm)^\lambda \, \ln (R^\pm)^j,  \\
        [\partial_\theta \ttv(R^\pm,\gamma^\pm)]_{\partial \mathcal{K}_1^\pm
          \cap \partial \mathcal{K}_2^\pm} & \; =&& \ttb \, (R^\pm)^\lambda
        \, \ln (R^\pm)^j.
      \end{aligned} 
    \right.
  \end{equation}
\end{lemma}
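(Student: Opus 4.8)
The plan is to look for $\ttv$ in the separated form $(R^\pm)^\lambda\sum_{k=0}^N(\ln R^\pm)^k g_k(\theta^\pm)$ and to reduce the mixed Neumann/transmission problem~\eqref{eq:ProblemeLemmeTechniqueCone} to a finite cascade of two--point boundary value problems for the angular profiles $g_N,\dots,g_0$, solved from the top index downwards. First I would record the elementary identity
\[
\Delta\big((R^\pm)^\lambda(\ln R^\pm)^k g(\theta^\pm)\big)=(R^\pm)^{\lambda-2}\Big((g''+\lambda^2 g)(\ln R^\pm)^k+2\lambda k\,g\,(\ln R^\pm)^{k-1}+k(k-1)\,g\,(\ln R^\pm)^{k-2}\Big),
\]
valid in each open sector $\mathcal{K}_1^\pm$, $\mathcal{K}_2^\pm$. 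Collecting powers of $\ln R^\pm$, the condition $\Delta\ttv=0$ becomes the triangular system
\[
g_m''+\lambda^2 g_m=-2\lambda(m+1)\,g_{m+1}-(m+1)(m+2)\,g_{m+2},\qquad 0\le m\le N,
\]
on $I_1^\pm$ and on $I_2^\pm$ (with $g_{N+1}=g_{N+2}=0$); the Neumann conditions at $a^\pm,b^\pm$ become $g_m'(a^\pm)=g_m'(b^\pm)=0$ for all $m$; and the jump conditions at $\gamma^\pm$ become $[g_m]_{\gamma^\pm}=\tta\,\delta_{m,j}$ and $[g_m']_{\gamma^\pm}=\ttb\,\delta_{m,j}$.

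Next I would analyse the scalar angular problem attached to one level: $g''+\lambda^2 g=(\text{given smooth source})$ on $I_1^\pm$ and $I_2^\pm$, Neumann at the outer endpoints, prescribed jumps of $g$ and $g'$ at $\gamma^\pm$. Its homogeneous version is $g''+\lambda^2 g=0$ on $(a^\pm,b^\pm)$ with Neumann at both ends (because then $g$ and $g'$ automatically match across $\gamma^\pm$), which has only the trivial solution unless $\lambda\Theta\in\pi\Z$; in that resonant case the kernel is one--dimensional, spanned by $c(\theta^+)=\cos(\lambda\theta^+)$ (resp.\ $c(\theta^-)=\cos(\lambda(\theta^--\pi))$) when $\lambda\neq0$ and by the constants when $\lambda=0$. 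This dichotomy is exactly what fixes the number $N$ of logarithmic levels in the three cases of the statement.

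If $\lambda\notin\frac{\pi}{\Theta}\Z$ (so $N=j$), each level is uniquely solvable, and running the cascade from $m=j$ (where the jump data $(\tta,\ttb)$ enter, with zero source) down to $m=0$ (zero jumps, source built from the already constructed $g_{m+1},g_{m+2}$) yields all profiles; smoothness up to the endpoints of $I_1^\pm,I_2^\pm$ follows from variation of constants, the sources being smooth. If $\lambda\in\frac{\pi}{\Theta}\Z^\ast$ (so $N=j+1$), each level is a Fredholm problem of index zero: solvability requires one scalar compatibility condition, obtained by testing the source and the jump data against $c$, and the solution is then unique up to a multiple of $c$. The extra logarithmic level is precisely what absorbs this obstruction: the source at level $m$ contains the term $-2\lambda(m+1)g_{m+1}$ with $\lambda(m+1)\neq0$, so the still--free multiple of $c$ in $g_{m+1}$ can be tuned to discharge the compatibility condition at level $m$; one runs the cascade from $m=N$ (where $g_N$ is a multiple of $c$, fixed at the end) downwards, the data $(\tta,\ttb)$ being injected at level $j<N$. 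The case $\lambda=0$ (so $N=j+2$) is identical except that the coupling $2\lambda(m+1)g_{m+1}$ disappears, so the source at level $m$ only sees $g_{m+2}$; two extra levels are then needed, the additive constant in $g_{m+2}$ playing the role of the free parameter.

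The hard part, as usual in this corner analysis, will be the resonant bookkeeping: verifying that the tower of scalar compatibility conditions produced by the cascade can always be met by a logarithmic expansion of precisely the announced height $N$, and that the adjustments made at the higher levels do not invalidate the lower ones. Once this is in place, unwinding the reduction shows that $\ttv=(R^\pm)^\lambda\sum_{k=0}^N(\ln R^\pm)^k g_k(\theta^\pm)$ is harmonic in $\mathcal{K}_1^\pm\cap\mathcal{K}_2^\pm$, satisfies the homogeneous Neumann conditions at $a^\pm$ and $b^\pm$, and has exactly the prescribed jumps $\tta\,(R^\pm)^\lambda(\ln R^\pm)^j$ and $\ttb\,(R^\pm)^\lambda(\ln R^\pm)^j$ of $\ttv$ and $\partial_{\theta^\pm}\ttv$ across $\gamma^\pm$, with $g_k\in\mathcal{C}^\infty(\overline{I_1^\pm})\cap\mathcal{C}^\infty(\overline{I_2^\pm})$. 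The remaining computational details follow Chapter~3 of Ref.~\cite{TheseAbdelkader} and Section~6.4.2 of Ref.~\cite{MR1469972}.
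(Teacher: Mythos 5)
Your proposal is correct: the paper does not prove this lemma in the text but defers to \cite{TheseAbdelkader} and \cite{MR1469972}, and your reduction of harmonicity to the triangular cascade $g_m''+\lambda^2 g_m=-2\lambda(m+1)g_{m+1}-(m+1)(m+2)g_{m+2}$ with Neumann conditions at $a^\pm,b^\pm$ and jump conditions $[g_m]=\tta\,\delta_{m,j}$, $[g_m']=\ttb\,\delta_{m,j}$ at $\gamma^\pm$, together with the resonance dichotomy fixing $N$, is exactly the argument carried out in those references. The step you flag as delicate does close: the free kernel component added to $g_{m+1}$ (to the additive constant in $g_{m+2}$ when $\lambda=0$) enters the Fredholm compatibility condition at level $m$ linearly with the nonvanishing coefficient $-2\lambda(m+1)\int_{a^\pm}^{b^\pm} c^2\,d\theta$ (resp.\ $-(m+1)(m+2)(b^\pm-a^\pm)$) while leaving the already-satisfied equation for $g_{m+1}$ unchanged, so the downward sweep is consistent.
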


\begin{remark} If $\lambda_n - q \in \frac{\pi}{\Theta}\IZ^\ast$, the function
  $w_{n,q,\pm}$ is not uniquely defined by
  \eqref{eq:definition_w_star_p} because we can add any multiple of
  the function $\theta^\pm \mapsto w_{\frac{\Theta}{\pi} (\lambda_n -
    q),0,\pm} (\theta^\pm)$. In that case, we restore the uniqueness
  taking the orthogonal projection of $w_{n,q,0,\pm}$  with respect to
  $w_{\frac{\Theta}{\pi} (\lambda_n - q),0,\pm}$, \ie
  \begin{equation*}
    \label{ConditionOrthogonalitePlus_1}
    \int_{a^\pm}^{b^\pm} w_{n, q, 0,\pm }(\ln R^\pm, \theta^\pm)
    w_{\frac{\Theta}{\pi} (\lambda_n - q),0,\pm} (\theta^\pm) d \theta^\pm = 0,
    \quad \big( \lambda_n - q \in
    \frac{\pi}{\Theta}\IZ^\ast , q\geq 1 \big).
  \end{equation*}
Similarly, for $n>0$, if $\lambda_n - q = 0$, the function $w_{n,q,\pm}$ is not uniquely defined by
  \eqref{eq:definition_w_star_p}, because we can add any multiple of
  the functions $1$ and
  $ \ln R^\pm$. Here again, the uniqueness is restored by imposing $
    \int_{a^\pm}^{b^\pm} w_{n, q, 0,\pm }
 d \theta^\pm =    \int_{a^\pm}^{b^\pm} w_{n, q, 1,\pm }
   d \theta^\pm = 0$.
\end{remark}
\subsection{Definition of the profile functions $p_{n,q,\pm}$}
\label{sec:defin-prof-funct_p}
Finally, the functions $p_{n,q,\pm}$ are given by
\begin{equation} \label{definitionPmr}
p_{n,q,\pm}(\ln |X_1^\pm|,\mathbf{X}^\pm) =
\sum_{i=0}^q \, \;
g_{n,q-i,i,+}^{\mathfrak{t}}(\ln |X_1^\pm|)\;
  W_i^{\mathfrak{t}} (\mathbf{X}^\pm)
  +\sum_{i=1}^q \;
  g_{n,q-i,i,+}^{\mathfrak{n}}(\ln |\ln X_1^\pm|) \;
  W_i^{\mathfrak{n}}(\mathbf{X}^\pm
  ) .
\end{equation}

\end{appendix}

\def\cprime{$'$}


\begin{thebibliography}{10}

\bibitem{Ammari}
T.~Abboud and H.~Ammari.
\newblock Diffraction at a curved grating: {TM} and {TE} cases, homogenization.
\newblock {\em J. Math. Anal. Appl.}, 202(3):995--1026, 1996.

\bibitem{MR0167642}
Milton Abramowitz and Irene~A. Stegun.
\newblock {\em Handbook of mathematical functions with formulas, graphs, and
  mathematical tables}, volume~55 of {\em National Bureau of Standards Applied
  Mathematics Series}.
\newblock For sale by the Superintendent of Documents, U.S. Government Printing
  Office, Washington, D.C., 1964.

\bibitem{Achdou}
Y.~Achdou.
\newblock Etude de la r\'eflexion d'une onde \'electromagn\'etique par un
  m\'etal recouvert d'un rev\^etement m\'etallis\'e.
\newblock Technical report, INRIA, 1989.

\bibitem{AchdouCR}
Y.~Achdou.
\newblock {Effect of a thin metallized coating on the reflection of an
  electromagnetic wave }.
\newblock {\em C. R. Acad. Sci. Paris, Ser. I}, {314}({3}):{217--222}, January
  {1992}.

\bibitem{AchdouPironneauValentin}
Y.~Achdou, O.~Pironneau, and F.~Valentin.
\newblock Effective boundary conditions for laminar flows over periodic rough
  boundaries.
\newblock {\em J. Comput. Phys.}, 147(1):187--218, 1998.

\bibitem{ArtolaCessenat}
M.~Artola and M.~Cessenat.
\newblock Diffraction d'une onde \'electromagn\'etique par une couche composite
  mince accol\'ee \`a\ un corps conducteur \'epais. {I}. {C}as des inclusions
  fortement conductrices.
\newblock {\em C. R. Acad. Sci. Paris, Ser.~I}, 313(5):231--236, 1991.

\bibitem{ArtolaCessenat2}
M.~Artola and M.~Cessenat.
\newblock {Scattering of an electromagnetic wave by a slender composite slab in
  contact with a thick perfect conductor. II. Inclusions (or coated material)
  with high conductivity and high permeability}.
\newblock {\em C.~R. Acad. Sci. Paris, Ser. I}, 313(6):381--385, 1991.

\bibitem{BendaliMakhloufTordeux}
A.~Bendali, A.~Makhlouf, and S.~Tordeux.
\newblock Field behavior near the edge of a microstrip antenna by the method of
  matched asymptotic expansions.
\newblock {\em Quart. Appl. Math.}, 69(4):691--721, 2011.

\bibitem{MR2573145}
V.~Bonnaillie-No{\"e}l, M.~Dambrine, S.~Tordeux, and G.~Vial.
\newblock Interactions between moderately close inclusions for the {L}aplace
  equation.
\newblock {\em Math. Models Meth. Appl. Sci.}, 19(10):1853--1882, 2009.

\bibitem{Bonnet.Drissi.Gmati:2005}
A.-S.~Bonnet-Ben~Dhia, D.~Drissi, and N.~Gmati.
\newblock Mathematical analysis of the acoustic diffraction by a muffler
  containing perforated ducts.
\newblock {\em Math. Models Meth. Appl. Sci.}, 15(7):1059--1090, 2005.

\bibitem{BreschMilisic2010}
D.~Bresch and V.~Milisic.
\newblock High order multi-scale wall-laws, {P}art {I}: the periodic case.
\newblock {\em Quart. Appl. Math.}, 68(2):229--253, 2010.

\bibitem{BrezisAnglais}
H.~Brezis.
\newblock {\em Functional analysis, {S}obolev spaces and partial differential
  equations}.
\newblock Universitext. Springer, New York, 2011.

\bibitem{CalozVial}
G.~Caloz, M.~Costabel, M.~Dauge, and G.~Vial.
\newblock Asymptotic expansion of the solution of an interface problem in a
  polygonal domain with thin layer.
\newblock {\em Asymptot. Anal.}, 50(1-2):121--173, 2006.

\bibitem{CiupercaJaiPoignard}
I.~S. Ciuperca, M.~Jai, and C.~Poignard.
\newblock Approximate transmission conditions through a rough thin layer: the
  case of periodic roughness.
\newblock {\em European J. Appl. Math.}, 21(1):51--75, 2010.

\bibitem{XavierArticle}
X.~Claeys.
\newblock On the theoretical justification of {P}ocklington's equation.
\newblock {\em Math. Models Meth. Appl. Sci.}, 19(8):1325--1355, 2009.

\bibitem{Claeys.Delourme:2013}
X.~Claeys and B.~Delourme.
\newblock High order asymptotics for wave propagation across thin periodic
  interfaces.
\newblock {\em Asymptot. Anal.}, 83(1--2):35--82, 2013.

\bibitem{conceptsweb}
{Concepts Development Team}.
\newblock {\em Webpage of {Numerical} {C++} {Library} {Concepts} 2}.
\newblock http://www.concepts.math.ethz.ch, 2016.

\bibitem{DaugeTordeuxVialVersionLongue}
M.~Dauge, S.~Tordeux, and G.~Vial.
\newblock Selfsimilar perturbation near a corner: matching versus multiscale
  expansions for a model problem.
\newblock In {\em Around the research of {V}ladimir {M}az'ya. {II}}, volume~12
  of {\em Int. Math. Ser. (N. Y.)}, pages 95--134. Springer-Verlag, New York,
  2010.

\bibitem{TheseDelourme}
B.~Delourme
\newblock{Mod\`eles et asymptotiques des interfaces fines et p\'riodiques en \'lectromagn\'etisme}
\newblock PhD thesis,  Universit\'e Pierre et Marie Curie, 2010.

\bibitem{ResearchReportAKB}
B.~Delourme, K.~Schmidt, and A.~Semin.
\newblock When a thin periodic layer meets corners: asymptotic analysis of a
  singular poisson problem.
\newblock Technical report, June 2015.

\bibitem{Delourme.Semin.Schmidt:2015}
B.~Delourme, K.~Schmidt, and A.~Semin.
\newblock On the homogenization of thin perforated walls of finite length.
\newblock {\em Asymptotic Analysis}, 97(3-4):211-264, 2016.

\bibitem{Frauenfelder.Lage:2002}
Ph. Frauenfelder and Ch. Lage.
\newblock Concepts -- an object-oriented software package for partial
  differential equations.
\newblock {\em ESAIM: Math. Model. Numer. Anal.}, 36(5):937--951, 2002.

\bibitem{Goldstein:1982}
C.I. Goldstein.
\newblock {A finite element method for solving Helmholtz type equations in
  waveguides and other unbounded domains}.
\newblock {\em Math. Comp.}, 39(160):309--324, 1982.

\bibitem{Ilin}
A.~M. Il{\cprime}in.
\newblock {\em Matching of asymptotic expansions of solutions of boundary value
  problems}, volume 102 of {\em Translations of Mathematical Monographs}.
\newblock American Mathematical Society, Providence, RI, 1992.
\newblock Translated from the Russian by V. Minachin [V. V. Minakhin].

\bibitem{fente1}
P.~Joly and S.~Tordeux.
\newblock Matching of asymptotic expansions for wave propagation in media with
  thin slots. {I}. {T}he asymptotic expansion.
\newblock {\em Multiscale Model. Simul.}, 5(1):304--336 (electronic), 2006.

\bibitem{fente2}
P.~Joly and S.~Tordeux.
\newblock Matching of asymptotic expansions for waves propagation in media with
  thin slots. {II}. {T}he error estimates.
\newblock {\em ESAIM: Math. Model. Numer. Anal.}, 42(2):193--221, 2008.

\bibitem{Jol-Sem-2008}
P.~Joly and A.~Semin.
\newblock Construction and analysis of improved kirchoff conditions for
  acoustic wave propagation in a junction of thin slots.
\newblock {\em ESAIM Proceeding}, 25:44--67, Dec 2008.

\bibitem{MR1469972}
V.~A. Kozlov, V.~G. Mazya, and J.~Rossmann.
\newblock {\em Elliptic boundary value problems in domains with point
  singularities}, volume~52 of {\em Mathematical Surveys and Monographs}.
\newblock American Mathematical Society, Providence, RI, 1997.

\bibitem{Madureira}
A.L. Madureira and F.~Valentin.
\newblock Asymptotics of the {P}oisson problem in domains with curved rough
  boundaries.
\newblock {\em SIAM J. Math. Anal.}, 38(5):1450--1473 (electronic), 2006/07.

\bibitem{TheseAbdelkader}
A.~Makhlouf.
\newblock {\em Justification et am\'elioration de mod\`eles d'antenne patch par
  la m\'ethode des d\'eveloppements asymptotiques raccord\'es}.
\newblock PhD thesis, Institut National des Sciences Appliqu\'ees de Toulouse,
  2008.

\bibitem{LivreNazarov1}
V.~Maz'ya, S.~Nazarov, and B.~Plamenevskij.
\newblock {\em Asymptotic theory of elliptic boundary value problems in
  singularly perturbed domains. {V}ol. {I}}, volume 111 of {\em Operator
  Theory: Advances and Applications}.
\newblock Birkh\"auser Verlag, Basel, 2000.
\newblock Translated from the German by Georg Heinig and Christian Posthoff.

\bibitem{Nazarov205}
S.~A. Nazarov.
\newblock The {N}eumann problem in angular domains with periodic and parabolic
  perturbations of the boundary.
\newblock {\em Tr. Mosk. Mat. Obs.}, 69:182--241, 2008.

\bibitem{Nicaise}
S.~Nicaise and A.-M. S{\"a}ndig.
\newblock General interface problems. {I}, {II}.
\newblock {\em Math. Meth. Appl. Sci.}, 17(6):395--429, 431--450, 1994.

\bibitem{Panasenko81}
G.~Panasenko
\newblock{\em High order asymptotics of solutions of problems on the contact of periodic structures.}
\newblock{\em Sbornik: Mathematics}, 38(4), 465-494, 1981.

\bibitem{PavliotisStuart2008}
G.A.~Pavliotis and A.M.~Stuart.
\newblock {\em Multiscale Methods: Averaging and Homogenization}.
\newblock Springer, 2008.

\bibitem{poirier2006impedance}
J.-R. Poirier, A.~Bendali, and P.~Borderies.
\newblock {Impedance boundary conditions for the scattering of time-harmonic
  waves by rapidly varying surfaces}.
\newblock {\em IEEE Trans. Antennas and Propagation}, 54(3):995--1005, 2006.

\bibitem{Poirier}
J.-R. Poirier, A.~Bendali, P.~Borderies, and S.~Tournier.
\newblock High order asymptotic expansion for the scattering of fast
  oscillating periodic surfaces.
\newblock In {\em Proc. 9th Int. Conf. on Mathematical and Numerical Aspects of
  Waves Propagation (Waves 2009), Pau, France}, 2009.

\bibitem{RauchTaylor}
J.~Rauch, M.~Taylor.
\newblock Potential and scattering theory on wildly perturbed domains
\newblock Journal  of functional analysis 18, 27-59, 1975.

\bibitem{SanchezPalencia}
E.~S{\'a}nchez-Palencia.
\newblock {\em Nonhomogeneous media and vibration theory}, volume 127 of {\em
  Lecture Notes in Physics}.
\newblock Springer-Verlag, Berlin, 1980.

\bibitem{RapportSanchezPalencia}
E.~S{\'a}nchez-Palencia.
\newblock Un probl\`eme d'\'ecoulement lent d'un fluide incompressible au
  travers d'une paroi finement perfor\'ee.
\newblock In {\em Homogenization methods: theory and applications in physics
  ({B}r\'eau-sans-{N}appe, 1983)}, volume~57 of {\em Collect. Dir. \'Etudes
  Rech. \'Elec. France}, pages 371--400. Eyrolles, Paris, 1985.

\bibitem{Schmidt.Kauf:2009}
K.~Schmidt and P.~Kauf.
\newblock Computation of the band structure of two-dimensional photonic
  crystals with {\em hp} finite elements.
\newblock {\em Comput. Methods Appl. Mech. Engrg.}, 198:1249--1259, March 2009.

\bibitem{Schwab1998}
C.~Schwab.
\newblock {\em $p$- and $hp$-finite element methods: Theory and applications in
  solid and fluid mechanics}.
\newblock Oxford University Press, Oxford, UK, 1998.

\bibitem{VanDyke}
M.~Van~Dyke.
\newblock {\em Perturbation methods in fluid mechanics}.
\newblock Applied Mathematics and Mechanics, Vol. 8. Academic Press, New York,
  1964.

\end{thebibliography}
\end{document}